\newcommand\reallywidehat[1]{%
\savestack{\tmpbox}{\stretchto{%
  \scaleto{%
    \scalerel*[\widthof{\ensuremath{#1}}]{\kern-.6pt\bigwedge\kern-.6pt}%
    {\rule[-\textheight/2]{1ex}{\textheight}}
  }{\textheight}%
}{0.5ex}}%
\stackon[1pt]{#1}{\tmpbox}%
}
\newcommand\reallywidecheck[1]{%
\savestack{\tmpbox}{\stretchto{%
  \scaleto{%
    \scalerel*[\widthof{\ensuremath{#1}}]{\kern-.6pt\bigwedge\kern-.6pt}%
    {\rule[-\textheight/2]{1ex}{\textheight}}
  }{\textheight}%
}{0.5ex}}%
\stackon[1pt]{#1}{\scalebox{-1}{\tmpbox}}%
}
\numberwithin{equation}{section}
\newcommand{\Z}{{\mathbb Z}}
\newcommand{\R}{{\mathbb R}}
\newcommand{\N}{{\mathbb N}}
\newcommand{\C}{{\mathbb C}}
\newcommand{\XX}{{\mathbb X}}
\newcommand{\cA}{{\mathcal A}}
\newcommand{\cD}{{\mathcal D}}
\newcommand{\im}{{\mathrm{i}}}
\newcommand{\supp}{{\operatorname{supp}}}
\newcommand{\e}{\operatorname{e}}
\newcommand{\dd}{\mbox{d}}
\newcommand{\cM}{{\mathcal M}}
\newcommand{\cF}{{\mathcal F}}
\newcommand{\cS}{{\textsf S}}
\newcommand{\cSM}{\mathcal{SM}}
\newcommand{\WAP}{\mathcal{W}\hspace*{-1pt}\mathcal{AP}}
\newcommand{\SAP}{\mathcal{S}\hspace*{-2pt}\mathcal{AP}}
\newcommand{\lm}{\ensuremath{\lambda\!\!\!\lambda}}
\newcommand{\sWAP}{\texttt{WAP}^{sm}}
\newcommand{\sSAP}{\texttt{SAP}^{sm}}
\newcommand{\Cu}{C_{\mathsf{u}}}
\newcommand{\Cc}{C_{\mathsf{c}}}
\newcommand{\Cz}{C^{}_{0}}
\newcommand{\SM}{\mathcal{SM}}
\theoremstyle{plain}
\newtheorem{theorem}{Theorem}[section]
\newtheorem{proposition}[theorem]{Proposition}
\newtheorem{lemma}[theorem]{Lemma}
\newtheorem{corollary}[theorem]{Corollary}
\newtheorem{fact}[theorem]{Fact}
\newtheorem{nota}[theorem]{Notation}
\theoremstyle{definition}
\newtheorem{definition}[theorem]{Definition}
\newtheorem{remark}[theorem]{Remark}
\newtheorem{example}[theorem]{Example}
\newtheorem{question}[theorem]{Question}
\begin{document}
\title{Semi-measures and their Fourier transform}

\author{Timo Spindeler}
\address{Fakult\"at f\"ur Mathematik, Universit\"at Bielefeld, \newline
\hspace*{\parindent} Postfach 100131, 33501 Bielefeld, Germany}
\email{tspindel@math.uni-bielefeld.de}

\author{Nicolae Strungaru}
\address{Department of Mathematical Sciences, MacEwan University \newline
\hspace*{\parindent} 10700 -- 104 Avenue, Edmonton, AB, T5J 4S2, Canada\\
and \\
Institute of Mathematics ``Simon Stoilow''\newline
\hspace*{\parindent}Bucharest, Romania}
\email{strungarun@macewan.ca}
\urladdr{http://academic.macewan.ca/strungarun/}

\begin{abstract}
The basic theory of semi-measures on locally compact Abelian groups is extended to prove the existence of a generalised Eberlein decomposition into such semi-measures.
\end{abstract}

\keywords{Fourier Transform of measures, Almost periodic measures, Lebesgue decomposition}

\subjclass[2010]{43A05, 43A25, 52C23}

\maketitle

\section{Introduction}

The Fourier transform plays a central role in mathematical diffraction theory. Typically, diffraction is defined by starting with a point set $\Lambda \subseteq \widehat{G}$, or more generally a measure $\omega$, constructing its autocorrelation (or 2-point correlation) measure $\gamma$ and taking the Fourier transform $\widehat{\gamma}$. This is a positive measure on the dual group $\widehat{G}$, which models the physical process of diffraction.

The discovery of quasi-crystals in the 1980's \cite{She} shattered many assumptions physicists made, and emphasized the need for a better understanding of diffraction, and its spectral components. It also became clear that we need a better understanding of pure point diffraction, and much progress has been made in this direction. Under extra assumptions, pure point diffraction was characterized via various conditions in \cite{BL,BM,Gouere-1,Gouere-2,ARMA,MS,MoSt,SOL,SOL1}. Recently, a complete characterisation for pure point diffraction, which unifies all these previous results, was given in \cite{LSS,LSS2}.

The great progress done in the study of systems with pure point spectrum, as well as the discovery of many models with interesting long range order and mixed diffraction spectrum (see \cite{BSS,BG2} just to name a few) has shifted the focus on understanding all components of the diffraction spectrum.

The Eberlein decomposition of measures \cite{TAO,Eb,ARMA,MoSt} allows us investigate the pure point and continuous diffraction spectrum in real space by studying the corresponding components of the autocorrelation measure. This was used effectively to show the relatively denseness of the Bragg spectrum for Meyer sets \cite{NS1}, as well as to derive various properties for the (pure point) diffraction spectrum of measures with Meyer set support (see for example \cite{BaGa,BG2,BSS,NS5}). Recently, for Fourier transformable measures with Meyer set support, a complete (or generalized) Eberlein decomposition of the autocorrelation, corresponding via the Fourier transform to the complete spectral decomposition of the diffraction, was established \cite{NS20a,NS21}. This allowed us to establish various properties of the absolutely continuous and singular continuous diffraction measure, respectively, for measures with Meyer set support \cite{NS20a,NS21}.

The existence of the generalized Eberlein decomposition, as well as finding an intrinsic formula for each component, are two important open problems in diffraction theory.
As mentioned above, the generalized Eberlein exists for Fourier transformable measures with Meyer set. In $\R^d$, it was shown in \cite{SS4} that e generalized Eberlein decomposition exists with each component being a tempered distribution of order at most $2d$. It is still unknown if in this case these distributions are given by measures, or if the generalized Eberlein decomposition exists in arbitrary second countable LCAG. The existence of the generalized Eberlein decomposition is equivalent to the following question:

\begin{question}\label{Q1}
Given a Fourier transformable measure $\gamma$, is the measure $(\widehat{\gamma})_{ac}$ a Fourier transform?
\end{question}

The main reason why this question is hard is because it hard in general to decide if a given measure is Fourier transformable or a Fourier transform. Every Fourier transform of a measure is a weakly almost periodic measure and weakly admissible, and every Fourier transformable translation bounded measure is also weakly almost periodic.  Since weakly almost periodic measures have very strong long-range properties (see for example \cite{LS}), it follows that in general, arbitrary measures are not Fourier transformable nor a Fourier transform. A strongly almost periodic measure which is weakly admissible is a Fourier transform \cite{NS20a}, but it is not clear if the same is true for weakly almost periodic measures. Moreover, the issue is muddied by the fact that if $\mu$ is a weakly admissible measure and $\nu$ is an arbitrary measure such that $|\nu| \leqslant |\mu|$, then $\nu$ is also weakly admissible, but $\nu$ is in general not a Fourier transform.

As shown in \cite{SS4}, in $\R^d$ one can overcome some of these issues by working with tempered distributions. One could try to extend this to arbitrary LCAG by using the Schwarz--Bruhat space \cite{Bru}, but unfortunately the Fourier analysis for this space is, to our knowledge, not as advanced as the Fourier theory for tempered distributions or measures.
In this paper, we try an alternate approach, which we explain now.

\smallskip

Given a measure $\omega$, one can study instead the ensemble $\XX(\omega)$ of all measures that locally look like $\omega$. By choosing an ergodic measure $m$ on $\XX(\omega)$ we get an ergodic dynamical system $(\XX(\omega), G, m)$ and hence an unitary representation of $G$ on $\mathcal{H}:= L^2(\XX(\omega), m)$. The dynamical and diffraction spectrum can then be related via the Dworkin argument (see for example \cite{BM,DM,Gouere-2,LMS,LS2}). As observed in \cite{LM,LSS} this leads to the more general concepts of point processes and $\mathcal{N}$-representations, with the former covering a much larger class of examples than just measures. Given an $\mathcal{N}$-representation which satisfies a simple condition, one can define a positive diffraction measure $\sigma$ \cite[Lem.~1.28]{LSS}, which for dynamical systems of translation bounded measures coincides with the usual diffraction measure. Moreover, in this case, one can also define an autocorrelation, which is not necessarily a measure but it is a semi-measure (see Definition~\ref{semi-measure} below). This semi-measure is Fourier transformable (see Definition~\ref{FT-semimeasures}) and its Fourier transform is exactly the diffraction measure $\sigma$ of the $\mathcal N$-representation.

\smallskip

It is the goal of this paper to expand the basic theory of semi-measures introduced in \cite{LSS} and their Fourier theory. Within this setting, a measure $\sigma$ is the Fourier transform of a semi-measure $\vartheta$ if and only if $\sigma$ is a weakly admissible measure (Proposition~\ref{prop:bijection_ftsm_sam}). This implies that each Fourier transformable measure admits a (unique) generalized Eberlein decomposition, with each term being a Fourier transformable semi-measure.

\smallskip

This paper is organised as follows. We start with a brief review of measures, almost periodicity and list the basic properties of weakly admissible measures. In Section~\ref{sect:semi m} we review the definition of semi-measures and give examples of semi-measures. We show in Lemma~\ref{pos implies measure} that any positive semi-measure is a measure. In Section~\ref{sect conv} we introduce the convolution between semi-measures and test functions and discuss the concepts of semi-translation boundedness and intertwining for semi-measures, concepts which play an important role in the rest of the paper. We continue in Section~\ref{FT semi} by reviewing the definition of Fourier transform and Fourier transformability for semi-measures, and study the properties of Fourier transform. We show that every Fourier transformable semi-measure is semi-translation bounded and intertwining. We continue in Section~\ref{sect: pos def} by studying the concept of positive definite semi-measures and proving the Bochner-type theorem for semi-measures (Thm.~\ref{bochner thm}). We prove in Prop.~\ref{prop arma} that a semi-measure, and hence a measure, is Fourier transformable if and only if it is a linear combination of positive definite, intertwining, semi-translation bounded semi-measures. We discuss the almost periodicity of semi-measures and the connection to the Fourier transform in Section~\ref{sect almost per}. In Theorem~\ref{t2}  we prove the existence of the Eberlein decomposition for weakly almost periodic semi-measures, and in Theorem~\ref{genEbe} we show the existence of the generalized Eberlein decomposition for Fourier transformable semi-measures. We discuss in Section~\ref{sect comp} properties of each component of the generalized Eberlein decomposition. We complete the paper by characterising in Theorem~\ref{T1} when a measure $\nu$ is the Fourier transform of a measure, and discussing the generalized Eberlein decomposition for Fourier transformable measures.

\section{Preliminaries}

In this paper, $G$ will always denote a second countable locally compact Abelian group (LCAG). We will denote its Haar measure by $\theta_G$  or simply $\dd x$. We will mainly work with $\Cu(G)$ and $\Cc(G)$ which denote the spaces of uniformly continuous and bounded functions and the space of continuous functions with compact support.

We will denote by $K_2(G)$ and $KL(G)$ the following subspaces of $\Cc(G)$:
\begin{align*}
K_2(G) &:= \mbox{Span} \{ f*g\, :\, f,g \in \Cc(G)\} \,,  \qquad
\reallywidehat{K_2(G)} := \{ \widehat{f}\, :\, f \in K_2(G) \}\\
KL(G)  &:= \{ f \in \Cc(G)\, :\, \widehat{f} \in L^1(G) \} \,, \qquad  \
\reallywidehat{KL(G)}  := \{ \widehat{f}\, :\, f \in KL(G) \}  \,.
\end{align*}
Here, for $f \in L^1(G)$ the \emph{Fourier transform} $\widehat{f}: \widehat{G} \to \C$ and \emph{inverse Fourier transform} $\reallywidecheck{f}: \widehat{G} \to \C$ are defined as usual via
\[
\widehat{f}(\chi)=\int_{G} \overline{\chi(t)}\, f(t)\ \dd t  \qquad \text{ and } \qquad \reallywidecheck{f}(\chi) =\widehat{f}(\overline{\chi}) = \int_{G} \chi(t)\, f(t)\ \dd t  \,.
\]
For general properties of the Fourier transform of $L^1$ functions on $G$ we recommend \cite{rud}.

For any function $g$ on $G$ and $t \in G$, the functions $T_tg, \widetilde{g}$ and $g^{\dagger}$ are defined by
\[
(T_tg)(x):=g(x-t)\,, \qquad \widetilde{g}(x):=\overline{g(-x)} \qquad \text{ and } \qquad g^{\dagger}(x):=g(-x) \,.
\]

A \emph{measure} $\mu$ on $G$ is a linear functional on $C_{\text{c}}(G)$ such that, for every compact subset $K\subseteq G$, there is a constant $a_K>0$ with
\[
|\mu(g)| \leqslant a_{K}\, \|g\|_{\infty}
\]
for all $g\in C_{\text{c}}(G)$ with $\supp(g) \subseteq K$. Here, $\|g\|_{\infty}$ denotes the supremum norm of $g$. By Riesz' representation theorem \cite{Rud}, this definition is equivalent to the classical measure theory concept of regular Radon measure.

Similar to functions, for a measure $\mu$ on $G$ and $t \in G$, we define $T_t\mu, \widetilde{\mu}$ and $\mu^{\dagger}$ by
\[
(T_t\mu)(g):= \mu(T_{-t}g)\,, \qquad \widetilde{\mu}(g):=\overline{ \mu (\widetilde{g})} \qquad  \text{ and } \quad
\mu^{\dagger}(g):= \mu(g^{\dagger}).
\]

Given a measure $\mu$, there exists a positive measure $| \mu|$ such that, for all $f \in \Cc(G)$ with $f \geqslant 0$, we have \cite{Ped} (compare \cite[Appendix]{CRS2})
\[
| \mu| (f)= \sup \{ \left| \mu (g) \right| \ :\ g \in \Cc(G),\,  |g| \leqslant f \} \,.
\]
The measure $| \mu|$ is called the \emph{total variation of} $\mu$.

\begin{definition}
A measure $\mu$ on $G$ is called \emph{Fourier transformable as measure} if there exists a measure $\widehat{\mu}$ on $G$ such that
\[
\reallywidecheck{f}\in L^2(\widehat{\mu}) \qquad \text{ and } \qquad
\left\langle \mu\, , \, f*\widetilde{f} \right\rangle =
\left\langle \widehat{\mu}\, , \, |\reallywidecheck{f}|^2 \right\rangle
\]
for all $f\in\Cc(G)$.

In this case, the measure $\widehat{\mu}$ is called the \emph{Fourier transform} of $\mu$.
We will denote the space of Fourier transformable measures by $\cM_T(G)$.
\end{definition}

\begin{remark}
It was shown in \cite{CRS} that a measure $\mu$ on $G$ is Fourier transformable if and only if there is a measure $\widehat{\mu}$ on $\widehat{G}$ such that
\[
\reallywidecheck{f}\in L^1(\widehat{\mu}) \qquad \text{ and } \qquad
\left\langle \mu\, , \, f \right\rangle =
\left\langle \widehat{\mu}\, , \reallywidecheck{f} \right\rangle
\]
for all $f\in KL(G)$.
\end{remark}

\smallskip

The next property will turn out to be quite useful, when we want to give sufficient conditions for the continuity of the Fourier transform.

\begin{definition}
A measure $\mu$ on $G$ is called \emph{translation bounded} if
\[
\|\mu\|_K:=\sup_{t\in G} |\mu|(t+K)<\infty \,,
\]
for all compact sets $K\subseteq G$.

As usual, we will denote by $\cM^\infty(G)$ the space of translation bounded measures and use the notation
\[
\cM^\infty_{T}(G):=\cM^\infty(G) \cap \cM_{T}(G) \,.
\]
\end{definition}

\medskip

Next, we review the concept of almost periodicity, a natural generalisation of periodicity. For a general review we recommend \cite{Eb,ARMA,LSS,LSS2,MoSt,ST}, just to name a few.

\begin{definition}
A function $f\in\Cu(G)$ is strongly (respectively weakly) almost periodic if the closure of $\{T_tf\, :\, t\in G\}$ in the strong (respectively weak) topology is compact. The spaces of strongly and weakly almost periodic functions on $G$ are denoted by $S\hspace*{-1pt}AP(G)$ and $W\hspace*{-2pt}AP(G)$, respectively.
\end{definition}

For every $f\in W\hspace*{-2pt}AP(G)$ the \emph{mean}
\[
M(f):=\lim_{n\to\infty} \frac{1}{A_n} \int_{A_n+s} f(t)\ \dd t
\]
exists uniformly in $s\in G$, for any given \emph{van Hove sequence} $(A_n)_{n\in\N}$  \cite[Prop. 4.5.9]{MoSt}.

Here, \emph{a van Hove sequence} $\cA=(A_n)_{n\in\N}$ is a sequence of compact Borel subsets of $G$ such that
\[
\lim_{n\to\infty} \frac{|((K+A_n)\setminus A_n^{\circ})\cup((-K+A_n^C)\cap A_n)|}{|A_n|} = 0 \,,
\]
for every compact set $K\subseteq G$.

\medskip

\begin{definition}
A function $f\in W\hspace*{-2pt}AP(G)$ is called null weakly almost periodic if $M(|f|)=0$.
\end{definition}
Let us note here that $|f| \in WAP(G)$ if $f \in WAP(G)$.

These definitions carry over to measures.

\begin{definition}
A measure $\mu\in\cM^{\infty}(G)$ is \emph{strongly}, \emph{weakly} or \emph{null weakly almost periodic} if $f*\mu$ is a strongly, weakly or null weakly almost periodic function, for all $f\in\Cc(G)$. We will denote by
$\SAP(G)$, $\WAP(G)$, and $\WAP_0(G)$ the spaces of strongly, weakly and null
weakly almost periodic measures.
\end{definition}

\section{Weakly admissible measures}

In this section, we briefly review the concept of weakly admissible measures.

\begin{nota}
Recall that (by convention) by $f \in L^1(\mu)$ we mean
\[
\int_G |f(x)|\, \dd|\mu|(x) < \infty \,.
\]
\end{nota}

Let us start with the following definition \cite{NS20b}.

\begin{definition}
A measure $\mu$ on $G$ is called \textit{weakly admissible} if $\reallywidecheck{f}\in L^1(\mu)$ for all $f\in K_2(\widehat{G})$.

We denote the space of weakly admissible measures on $G$ by $\cM^{w}(G)$.
\end{definition}

Let us next recall the following properties for weakly admissible measures, which will be important in the remaining of the paper.

\begin{lemma} \cite[Lem.~3.2]{NS20b} \label{lem:stronglyadmissible_prop}
Let $\mu\in\mathcal{M}(G)$. Then,
\begin{itemize}
  \item[(a)] $\mu$ is weakly admissible if and only if $|\mu|$ is weakly admissible.
  \item[(b)] If $\mu$ is weakly admissible and $| \nu| \leqslant | \mu |$, then $\nu$ is weakly admissible.
  \item[(c)] $\mu$ is weakly admissible if and only if $\mu_{\operatorname{pp}}, \mu_{\operatorname{ac}}, \mu_{\operatorname{sc}}$ are weakly admissible.
  \item[(d)] If $\mu$ is weakly admissible, then $\overline{\mu}$ is weakly admissible.
  \item[(e)] If $\mu$ is weakly admissible, then $\widetilde{\mu}$ and $\mu^\dagger$ are weakly admissible.
  \item[(f)] If $\mu$ is weakly admissible, then $T_t \mu$ is weakly admissible for all $t \in G$.
\end{itemize}  \qed
\end{lemma}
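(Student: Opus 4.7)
My plan is to attack the six items in order of increasing difficulty, exploiting the convention that $f\in L^1(\mu)$ means $\int |f|\,d|\mu|<\infty$. In particular, weak admissibility of $\mu$ is literally a property of the positive measure $|\mu|$, which makes (a) a tautology and reduces (b) to monotonicity of the integral against a dominated positive measure. Part (d) then follows at once from (a) together with the identity $|\overline{\mu}|=|\mu|$.

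For (c), the engine is the fact that $\mu_{\operatorname{pp}}$, $\mu_{\operatorname{ac}}$, $\mu_{\operatorname{sc}}$ are mutually singular, so
\[
|\mu| \;=\; |\mu_{\operatorname{pp}}| + |\mu_{\operatorname{ac}}| + |\mu_{\operatorname{sc}}|.
\]
Plugging the nonnegative integrand $|\reallywidecheck{f}|$ (for an arbitrary $f\in K_2(\widehat{G})$) into this identity shows that $\int|\reallywidecheck{f}|\,d|\mu|$ is finite if and only if each of the three summands is finite, giving both implications simultaneously.

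For (e), I would first verify the two algebraic closure properties $K_2(\widehat{G})^{\dagger}\subseteq K_2(\widehat{G})$ and $\overline{K_2(\widehat{G})}\subseteq K_2(\widehat{G})$, which are immediate from the corresponding properties of $\Cc(\widehat{G})$ and the fact that both operations commute with convolution. Then, using $|\mu^{\dagger}|=|\mu|^{\dagger}$, $|\widetilde{\mu}|=\widetilde{|\mu|}$, and the elementary identity $\reallywidecheck{f^{\dagger}} = (\reallywidecheck{f})^{\dagger}$ (and the conjugate analogue), a change of variables gives
\[
\int |\reallywidecheck{f}|\,d|\mu^{\dagger}| \;=\; \int |\reallywidecheck{f^{\dagger}}|\,d|\mu|,
\]
with the analogous identity for $\widetilde{\mu}$ involving $\overline{f^{\dagger}}\in K_2(\widehat{G})$. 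Weak admissibility of $\mu$ applied to the new test functions concludes the argument.

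Part (f) proceeds similarly from $|T_t\mu|=T_t|\mu|$ after a translation of variables: $\int|\reallywidecheck{f}(x)|\,d|T_t\mu|(x)=\int|\reallywidecheck{f}(x+t)|\,d|\mu|(x)$. The key calculation, and the one place a tiny bit of care is needed, is showing that $\reallywidecheck{f}(x+t)=\reallywidecheck{g_t}(x)$ with $g_t(\chi):=\chi(t)f(\chi)$ still lying in $K_2(\widehat{G})$; this follows from writing $f=\sum_j h_j\ast k_j$ with $h_j,k_j\in\Cc(\widehat{G})$ and noting that multiplication by the character $\chi(t)$ distributes across convolution as $g_t=\sum_j (\chi(t)h_j)\ast(\chi(t)k_j)$, preserving compact supports. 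The main obstacle, if one can call it that, is purely bookkeeping: isolating the closure properties of $K_2(\widehat{G})$ used in (e) and (f); everything else is routine manipulation of total variation and the convention on $L^1(\mu)$.
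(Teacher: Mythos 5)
Your proof is correct; the paper itself gives no argument here (the lemma is imported verbatim from \cite[Lem.~3.2]{NS20b}), and your route --- reducing everything to the total variation via the convention on $L^1(\mu)$, then using the closure of $K_2(\widehat{G})$ under $\dagger$, conjugation, and multiplication by characters --- is exactly the standard one used in that reference. The only points requiring care, namely $\reallywidecheck{f^{\dagger}}=(\reallywidecheck{f})^{\dagger}$, $|\widetilde{\mu}|=|\mu|^{\dagger}$, and the fact that $\chi\mapsto\chi(t)f(\chi)$ stays in $K_2(\widehat{G})$ because a character distributes across convolution, are all handled correctly.
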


\begin{lemma} \label{lem:wa_sa_prop}
Let $\mu\in\cM(G)$.
\begin{enumerate}
\item [(a)] If $\mu$ is weakly admissible, then $\mu$ is translation bounded.
\item[(b)] If $\mu$ is Fourier transformable, then $\widehat{\mu}$ is weakly admissible.
\end{enumerate}
\end{lemma}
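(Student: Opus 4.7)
The plan is to treat the two parts separately. For part (b), a direct polarization argument suffices. Every $f \in K_2(G)$ is a finite sum $f = \sum_{k=1}^{n} \alpha_k \, g_k * \widetilde{g_k}$ with $g_k \in \Cc(G)$, and a short calculation gives $\reallywidecheck{(g * \widetilde{g})} = |\reallywidecheck{g}|^2$ on $\widehat{G}$. The definition of Fourier transformability provides $\reallywidecheck{g_k} \in L^2(\widehat{\mu})$, i.e.\ $\int |\reallywidecheck{g_k}|^2 \,\dd|\widehat{\mu}| < \infty$, for every $k$. Summing with coefficients $|\alpha_k|$ bounds $\int |\reallywidecheck{f}| \,\dd|\widehat{\mu}|$, which is exactly the weak admissibility of $\widehat{\mu}$.

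For part (a), the plan is to combine a local positivity argument with a uniform-boundedness principle. First, pick $g \in \Cc(\widehat{G})$ with $\reallywidecheck{g}(0) = \int g \neq 0$, so that by continuity there exist a precompact open neighbourhood $V$ of $0 \in G$ and a constant $c > 0$ with $|\reallywidecheck{g}|^2 \geq c$ on $V$. With $K := \supp(g)$, consider the linear map
\[
T : \bigl(\{h \in \Cc(\widehat{G}) : \supp(h) \subseteq K\},\|\cdot\|_\infty\bigr) \longrightarrow L^2(|\mu|), \qquad h \mapsto \reallywidecheck{h}.
\]
It is well-defined since $h * \widetilde{h} \in K_2(\widehat{G})$ and weak admissibility force $|\reallywidecheck{h}|^2 = (h * \widetilde{h})^{\vee} \in L^1(|\mu|)$. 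It has closed graph: uniform convergence $h_n \to h$ with supports in $K$ gives $\|\reallywidecheck{h_n} - \reallywidecheck{h}\|_\infty \leq \theta_{\widehat{G}}(K)\,\|h_n - h\|_\infty \to 0$, while a passage to a $|\mu|$-a.e.\ convergent subsequence identifies any $L^2(|\mu|)$-limit with $\reallywidecheck{h}$. The closed graph theorem then yields a constant $C_K$ with $\|\reallywidecheck{h}\|_{L^2(|\mu|)} \leq C_K \|h\|_\infty$ throughout the domain.

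To finish, I run over translates. For each $t \in G$, the function $h_t(\chi) := \overline{\chi(t)}\, g(\chi)$ has support in $K$ and sup-norm $\|g\|_\infty$, while a short calculation gives $(h_t)^{\vee} = T_t \reallywidecheck{g}$. The bound on $T$ yields $\int_G |T_t \reallywidecheck{g}|^2 \,\dd|\mu| \leq C_K^2 \|g\|_\infty^2$ uniformly in $t$, and since $|T_t \reallywidecheck{g}|^2 \geq c$ on $t + V$, this bounds $|\mu|(t + V)$ uniformly in $t$. Any compact set can be covered by finitely many translates of $V$, so translation boundedness follows. The main obstacle is precisely the passage from pointwise-in-$t$ integrability (already granted by weak admissibility and the closure of $K_2(\widehat{G})$ under multiplication by characters) to a uniform-in-$t$ bound; this is what forces the use of the closed graph theorem.
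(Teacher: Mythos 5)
Your proposal is correct. Part (b) is exactly the argument the paper has in mind when it says the claim ``follows directly from the definition'': polarisation writes every $f\in K_2(G)$ as a finite linear combination of terms $g*\widetilde{g}$, and $\reallywidecheck{g*\widetilde{g}}=|\reallywidecheck{g}|^2\in L^1(|\widehat{\mu}|)$ is precisely what the definition of Fourier transformability provides. For part (a) the paper gives no argument at all, merely citing \cite[Thm.~3.3]{NS20b}, so your contribution here is a genuinely self-contained proof of the cited result. The route you take is sound: the space $\{h\in \Cc(\widehat{G}): \supp(h)\subseteq K\}$ is complete under the sup-norm, $L^2(|\mu|)$ is a Hilbert space, weak admissibility makes $h\mapsto \reallywidecheck{h}$ well defined between them (via $h*\widetilde{h}\in K_2(\widehat{G})$), and your closed-graph verification (uniform convergence of $\reallywidecheck{h_n}$ plus an a.e.\ convergent subsequence of any $L^2$-limit) is complete. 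The key trick --- feeding the functions $h_t=\overline{(\cdot)(t)}\,g$, which all live in the same ball of the domain while their inverse transforms sweep out all translates $T_t\reallywidecheck{g}$ --- correctly converts the automatic bound into a bound uniform in $t$, and the lower bound $|\reallywidecheck{g}|^2\geqslant c$ near $0$ together with a finite covering argument finishes translation boundedness. You correctly identify that the whole difficulty is upgrading pointwise-in-$t$ integrability to a uniform bound, and the closed graph theorem is a clean mechanism for that; the only cost is that the argument is non-constructive in the constant $C_K$, which is harmless here.
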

\begin{proof}
(a) This follows from \cite[Thm.~3.3]{NS20b}.

\smallskip

\noindent (b) This follows directly from the definition of Fourier transformable measures.
\end{proof}

\section{Semi-measures}\label{sect:semi m}

As emphasized in the introduction, we plan to use the newly introduced concept of semi-measures \cite{LSS} to show the existence of the generalized Eberlein decomposition for Fourier transformable measures within this class. We do this by showing that within the class of Fourier transformable semi-measures, Question~\ref{Q1} has a positive answer.

\smallskip

Let us first recall the definition of semi-measures from \cite{LSS}.

\begin{definition}\label{semi-measure}
We will call a functional $\vartheta: K_2(G)\to \C$ a \emph{semi-measure} on $G$.

We denote the set of semi-measures on $G$ by $\mathcal{SM}(G)$.
\end{definition}

Let us now look at some simple examples of semi-measures.

\begin{example}\label{ex4}
Let us consider some examples.
\begin{enumerate}
\item[(1)] Obviously, any measure is a semi-measure.
\item[(2)] The mapping
\[
\vartheta:K_2(\R)\to \C \,, \qquad f\mapsto \int_0^{\infty} \reallywidecheck{f}(t) \ \dd t \,,
\]
is a semi-measure but not a measure.
Recall here that (as distributions) one has
\[
\widehat{H}(f)=\frac{1}{2}\delta_0(f) - \frac{1}{2 \pi} \lim_{\delta\downarrow0} \int_{|t|>\delta} \frac{f(t)}{t}\ \dd t \,,
\]
where $H$ is the Heaviside distribution. This means that
\[
\vartheta(f)
    = H (\reallywidecheck{f})= \widehat{H}( f^\dagger)= \frac{1}{2} f(0)+ \frac{1}{2 \pi} \lim_{\delta\downarrow0} \int_{|t|> \delta} \frac{f(t)}{t}\ \dd t
\]
for all $f \in K_2(\R) \cap \Cc^\infty(\R)$.
\end{enumerate}\qed
\end{example}

\medskip
We should emphasize that we do not ask (yet) for any continuity for semi-measures, which makes the class of semi-measures very large. Our approach is to try to
study instead the properties of the Fourier transform of semi-measures and restrict to the subspaces of semi-measures satisfying these particular properties (for example, semi-translation bounded, intertwining, positive definiteness).

\medskip

Let us introduce below an example of a semi-measure which we will use below as an instructive example.

\begin{example}\label{ex2}
Fix some $f \in K_2(G)$, and let $B$ be a (Hamel) basis for $K_2(G)$ as a $\C$-vector space, with the property that $f \in B$.
Then, each element $g \in K_2(G)$ can be written uniquely as
\[
g= Cf+ \sum_{h\in B\setminus\{f\}} c_hh   \,,
\]
with $C,c_h \in \C$ and $C,c_h\neq0$ for at most finitely many $h$.
For each such $g$, define
\[
\varsigma_{f, B}(g)=C \,.
\]
Then, $\varsigma_{f,B}$ is a semi-measure.
\end{example}

\begin{example}
Let $G$ be a LCAG, let $F(G)$ be the vector space of all functions $f: G \to \C$, and let $\theta: F(G) \to \C$ be any linear functional.
Since $K_2(G)$ is a subspace of $F(G)$, the restriction of $\theta$ to $K_2(G)$ is a semi-measure.
\end{example}

\begin{example}\label{ex distributions}
Let $\Psi \in \cD'(\R^d)$ be any distribution. Then,
\[
K_2(\R^d) \cap \Cc^\infty(\R^d) \ni f \mapsto \Psi(f)
\]
defines a linear function on the subspace $K_2(\R^d)\, \cap\, \Cc^\infty(\R^d)$ of $K_2(\R^d)$, which can be extended (non-uniquely) to a semi-measure $\vartheta$.
This shows that, for each distribution $\Psi \in \cD'(\R^d)$, there exists a semi-measure $\vartheta$ such that
\[
\vartheta(f)= \Psi(f) \qquad \text{ for all } f \in K_2(\R^d) \cap \Cc^\infty(\R^d) \,.
\]
\end{example}

\begin{example}
Let $\Phi : L^1(\widehat{G}) \to \C$ be any linear functional. Then $\vartheta : K_2(G) \to \C$ given by
\[
\vartheta(g)=\Phi(\widehat{g})
\]
is a semi-measure.
\end{example}

As with functions and measures, we can define the operators $\widetilde{}\,, \dagger$ and $T_t$ for all $t \in G$ on the space $\SM(G)$ via
\[
(T_t\vartheta)(g):= \vartheta(T_{-t}g)\,, \qquad \widetilde{\vartheta}(g):=\overline{ \vartheta (\widetilde{g})} \qquad  \text{ and } \quad
\vartheta^{\dagger}(g):= \vartheta(g^{\dagger}).
\]

\medskip

Next, we want to introduce a topology on $\cSM(G)$. As so far we did not ask for any type of continuity for semi-measures, the most natural topology to consider is the topology of pointwise convergence.

To this regard, for each $f \in K_2(G)$ define $p_f : \SM(G) \to [0, \infty)$ via
\[
p_f(\vartheta)=\left| \vartheta(f) \right| \,.
\]
It is easy to see that each $p_f$ defines a semi-norm on $\SM(G)$, and that the family $\{ p_ f : f \in K_2(G)\}$ separates points.

\begin{definition}
The locally convex topology on $\SM(G)$ defined by the family of semi-norms $\{ p_f :f \in K_2(G)\}$ is denoted by $\tau_{\text{pt}}$ and called the \emph{pointwise topology for semi-measures}.
\end{definition}

Note that in this topology a net $\vartheta_\alpha \in \SM(G)$ converges to some $\vartheta \in \SM(G)$ if and only if $\lim_\alpha \vartheta_\alpha(f)=\vartheta(f)$ for all $f \in \SM(G)$.

\medskip

The following result is standard and immediate, and we skip its proof.
\begin{proposition}\label{sm complete}
The space $\cSM(G)$ is a complete locally convex topological vector space with respect to pointwise topology. \qed
\end{proposition}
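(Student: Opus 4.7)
The plan is to identify $\SM(G)$ with (a subspace of) the product space $\C^{K_2(G)}$, since a semi-measure is by definition just a function $K_2(G)\to\C$, and observe that the pointwise topology $\tau_{\text{pt}}$ is exactly the subspace topology inherited from the product topology on $\C^{K_2(G)}$. Local convexity is then automatic: $\tau_{\text{pt}}$ is by definition generated by the separating family of seminorms $\{p_f : f \in K_2(G)\}$, and any topology defined by a separating family of seminorms is Hausdorff and locally convex.

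For completeness, I would take an arbitrary Cauchy net $(\vartheta_\alpha)_{\alpha}$ in $(\SM(G),\tau_{\text{pt}})$. By the definition of the topology, for every $f\in K_2(G)$ the scalar net $(\vartheta_\alpha(f))_\alpha$ satisfies $|\vartheta_\alpha(f)-\vartheta_\beta(f)|=p_f(\vartheta_\alpha-\vartheta_\beta)\to 0$, so it is Cauchy in $\C$ and converges to some limit, which I call $\vartheta(f)$. This defines a map $\vartheta:K_2(G)\to\C$, that is, an element of $\SM(G)$. By construction $p_f(\vartheta_\alpha-\vartheta)=|\vartheta_\alpha(f)-\vartheta(f)|\to 0$ for every $f$, so $\vartheta_\alpha\to\vartheta$ in $\tau_{\text{pt}}$.

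If one reads Definition~\ref{semi-measure} as requiring linearity (as is suggested by all subsequent examples and by the seminorm structure), there is one extra line to check: that $\vartheta$ is linear. This follows immediately from pointwise convergence, since for $f,g\in K_2(G)$ and $a,b\in\C$,
\[
\vartheta(af+bg)=\lim_\alpha\vartheta_\alpha(af+bg)=\lim_\alpha\bigl(a\vartheta_\alpha(f)+b\vartheta_\alpha(g)\bigr)=a\vartheta(f)+b\vartheta(g).
\]
There is no real obstacle here; the result is a direct instance of the general fact that the algebraic dual of a vector space, equipped with the topology of pointwise convergence, is a complete locally convex space (equivalently, that $\C^X$ is complete in the product topology for any set $X$, and linear functionals form a closed subspace). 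This is presumably why the authors classify the statement as standard and omit the proof.
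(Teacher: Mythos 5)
Your proof is correct and is exactly the standard argument the paper has in mind: the authors explicitly skip the proof as ``standard and immediate,'' and your identification of $\cSM(G)$ with the algebraic dual of $K_2(G)$ under the product/pointwise topology, together with the observation that pointwise limits of linear functionals are linear, is precisely that argument. Nothing is missing.
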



\medskip

As pointed before, $K_2(G) \subseteq \Cc(G)$ implies that any measure is a semi-measure. It is natural to ask when a semi-measure can be extended to a measure, in which case we will simply say that the semi-measure is a measure. Since measures need to be continuous in the inductive topology, and the definition of semi-measures requires no continuity whatsoever, it is clear that not all semi-measures can be extended to a measure.
We show below that a semi-measure is a measure exactly when it is continuous in the topology induced from the inductive topology via the embedding $K_2(G) \subseteq \Cc(G)$.
In particular, it follows that any positive semi-measure is a measure, and that a semi-measure is a measure exactly when it is a linear combination of positive semi-measures.

\begin{lemma} \cite[Lem. C.8]{LSS} \label{semi is measure}
Let $\vartheta:K_2(G)\to \C$ be a semi-measure. Then, $\vartheta$ is a measure if and only if, for all compact sets $K \subseteq G$, there exists a constant $C_K>0$ such that
\[
| \vartheta(f) | \leqslant C_K \| f\|_\infty
\]
for all $f \in K_2(G)$ with $\supp(f) \subseteq K$.   \qed
\end{lemma}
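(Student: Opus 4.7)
The forward implication is immediate from the definition of a measure: if $\vartheta$ extends to a measure on $\Cc(G)$, then the required bound on $\{f\in K_2(G):\supp(f)\subseteq K\}$ is just the restriction of the defining measure bound. The content lies in the converse, where the plan is to construct an extension $\tilde\vartheta:\Cc(G)\to\C$ by density-and-continuity, exploiting the fact that single convolutions $f*g$ with $f,g\in\Cc(G)$ already sit inside $K_2(G)$, and that such convolutions uniformly approximate arbitrary elements of $\Cc(G)$.

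First I would fix once and for all a compact symmetric neighborhood $V$ of $0\in G$ and an approximate identity $(\phi_n)_{n\in\N}\subseteq\Cc(G)$ with $\phi_n\geqslant 0$, $\int_G\phi_n\,\dd x=1$, and $\supp(\phi_n)\subseteq V$. Given $f\in\Cc(G)$ with $\supp(f)\subseteq K$, set $g_n:=f*\phi_n$. Then $g_n\in\Cc(G)*\Cc(G)\subseteq K_2(G)$, $\supp(g_n)\subseteq K+V$, and $g_n\to f$ uniformly by the uniform continuity of $f$. Applying the hypothesis with the compact set $K+V$ gives
\[
|\vartheta(g_n)-\vartheta(g_m)|=|\vartheta(g_n-g_m)|\leqslant C_{K+V}\,\|g_n-g_m\|_\infty\longrightarrow 0,
\]
so $(\vartheta(g_n))_n$ is Cauchy in $\C$, and I set $\tilde\vartheta(f):=\lim_n\vartheta(g_n)$.

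The remaining steps are routine. For well-definedness, if $(h_n)\subseteq K_2(G)$ is another approximating sequence with $h_n\to f$ uniformly and $\bigcup_n\supp(h_n)\subseteq K'$ compact, then $g_n-h_n\in K_2(G)$ has support in the fixed compact set $(K+V)\cup K'$ and $\|g_n-h_n\|_\infty\to 0$, so the hypothesis forces $\vartheta(g_n-h_n)\to 0$. Linearity follows by merging two approximating sequences, and $\tilde\vartheta|_{K_2(G)}=\vartheta$ follows by taking the constant sequence for $f\in K_2(G)\cap\Cc(K)$ (or by applying the hypothesis to $f*\phi_n-f$). Passing to the limit in the hypothesized inequality gives $|\tilde\vartheta(f)|\leqslant C_{K+V}\,\|f\|_\infty$ for every $f\in\Cc(G)$ with $\supp(f)\subseteq K$, exhibiting $\tilde\vartheta$ as a measure with constants $a_K:=C_{K+V}$.

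The only genuine point of care is the bookkeeping of supports: every invocation of the hypothesis must use a compact set fixed \emph{before} taking limits, otherwise the constants $C_{(\cdot)}$ could blow up along the approximation. Using the single fixed neighborhood $V$ throughout resolves this cleanly, and no continuity of $\vartheta$ beyond the single hypothesized estimate is ever used, which is the whole point of the lemma.
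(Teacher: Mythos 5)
Your proof is correct. The paper itself gives no argument for this lemma (it is quoted from \cite[Lem.~C.8]{LSS} with the proof omitted), and your density-plus-approximate-identity construction is precisely the standard argument one expects here; it is also consistent with the density of $K_2(G)$ in $\Cc(G)$ that the paper invokes without proof in Lemma~\ref{pos implies measure}. Your emphasis on fixing the compact set $K+V$ before passing to the limit is exactly the right point of care, and the verification that $\tilde\vartheta$ restricts to $\vartheta$ on $K_2(G)$ via the estimate on $f*\phi_n-f\in K_2(G)$ closes the argument cleanly.
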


\begin{remark}
Consider the mapping
\[
D : \Cc^\infty(\R)\cap K_2(\R) \to \C \,, \qquad \varphi\mapsto\varphi'(0) \,,
\]
and let $\vartheta : K_2(\R) \to \C$ be any extension of this, compare Example \ref{ex distributions}.
Then, $\vartheta$ cannot be extended to a measure. Indeed,
\[
\vartheta(f)= D(f)\qquad \text{ for all } f \in K_2(\R) \cap \Cc^\infty(\R) \,.
\]

Now, pick some $f \in K_2(\R) \cap \Cc^\infty(\R)$ such that $f'(0)=1$ and $\supp(f) \subseteq [-1,1]$. Let $f_n(x):=f(nx)$. Then, $\supp(f_n) \subseteq [-1,1]$ and $\|f_n \|_\infty = \|f\|_\infty \neq 0$.
\[
| \vartheta(f_n) |=| D(f_n)|= | -f_n'(0) |= | -nf'(0) |=n= \frac{n}{\|f\|_\infty} \| f_n \|_\infty  \,,
\]
and hence, by Lemma~\ref{semi is measure}, $\vartheta$ cannot be extended to a measure.
\end{remark}

Next, we show that a semi-measure is a translation bounded measure if and only if the relation in Lemma~\ref{semi is measure} is uniform in translates.

\begin{lemma}\label{lem 2}
Let $\vartheta:K_2(G)\to \C$ be a semi-measure. Then, $\vartheta$ is a translation bounded measure if and only if, for all compact sets $K \subseteq G$, there exists a constant $C_K>0$ such that
\[
| \vartheta(T_tf) | \leqslant C_K\, \| f\|_\infty \,.
\]
for all $f \in K_2(G)$ with $\supp(f) \subseteq K$ and all $t \in G$.
\end{lemma}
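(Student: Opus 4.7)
The plan is to handle the two directions separately, with the forward direction being routine and the reverse requiring an approximate identity argument.

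For the forward direction, assume $\vartheta$ is a translation bounded measure. Then for $f \in K_2(G)$ with $\supp(f) \subseteq K$ and any $t \in G$, the bound
\[
|\vartheta(T_t f)| \leqslant \int |T_t f|\, \dd |\vartheta| \leqslant \|f\|_\infty\, |\vartheta|(t+K) \leqslant \|\vartheta\|_K\, \|f\|_\infty
\]
is immediate, so $C_K := \|\vartheta\|_K$ works.

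For the converse, setting $t = 0$ in the hypothesis yields exactly the condition of Lemma~\ref{semi is measure}, so $\vartheta$ first extends uniquely to a measure on $\Cc(G)$; what remains is to show this measure is translation bounded. My strategy is to upgrade the hypothesis from $K_2(G)$-test functions to all of $\Cc(G)$, uniformly in translates, via an approximate identity. Concretely, fix a relatively compact symmetric open neighborhood $U$ of $0$ and choose an approximate identity $(h_\alpha) \subseteq \Cc(G)$ with $h_\alpha \geqslant 0$, $\int h_\alpha = 1$, and $\supp(h_\alpha) \subseteq U$. For $f \in \Cc(G)$ with $\supp(f) \subseteq K$, the functions $(T_t f)\ast h_\alpha = T_t(f \ast h_\alpha)$ lie in $K_2(G)$, have supports contained in the fixed compact set $t + K + \overline{U}$, and satisfy $\|(T_t f)\ast h_\alpha\|_\infty \leqslant \|f\|_\infty$. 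Applying the hypothesis with $K' := K + \overline{U}$ gives
\[
|\vartheta(T_t(f \ast h_\alpha))| \leqslant C_{K'}\, \|f\|_\infty \,,
\]
uniformly in $\alpha$ and $t$. Since $f \ast h_\alpha \to f$ uniformly with all supports confined to $K'$, and $\vartheta$ is continuous on $\Cc_{t+K'}(G)$ in the uniform norm (as a measure), the left-hand side converges to $|\vartheta(T_t f)|$, yielding $|\vartheta(T_t f)| \leqslant C_{K'}\, \|f\|_\infty$ for every $f \in \Cc(G)$ with $\supp(f) \subseteq K$ and every $t \in G$.

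To conclude translation boundedness, I would pick $\varphi \in \Cc(G)$ with $0 \leqslant \varphi$, $\varphi \geqslant \mathbbm{1}_K$, and $\supp(\varphi) \subseteq K'$. By monotonicity of $|\vartheta|$ and the standard formula recalled in the preliminaries,
\[
|\vartheta|(t+K) \leqslant |\vartheta|(T_t \varphi) = \sup\{ |\vartheta(g)| : g \in \Cc(G),\ |g| \leqslant T_t \varphi \} \,.
\]
Every such $g$ equals $T_t g'$ with $\supp(g') \subseteq K'$ and $\|g'\|_\infty \leqslant \|\varphi\|_\infty$, so the upgraded bound (applied with $K'$ in place of $K$ and $K'' := K' + \overline{U}$) gives $|\vartheta(g)| \leqslant C_{K''}\, \|\varphi\|_\infty$, uniformly in $t$. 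Hence $\|\vartheta\|_K \leqslant C_{K''}\, \|\varphi\|_\infty < \infty$.

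The only genuine obstacle is the upgrade step: the hypothesis controls $\vartheta$ only on $K_2(G)$, but the total variation is computed as a supremum over all of $\Cc(G)$. The approximate identity trick is the natural fix because convolution of a $\Cc(G)$-function with a $\Cc(G)$-function automatically lands in $K_2(G)$, the sup-norm does not grow, and the support enlargement is controlled by a fixed neighborhood $U$, which is exactly what is needed to preserve uniformity in $t$.
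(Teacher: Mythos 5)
Your proof is correct, and in the converse direction it takes a genuinely different (more self-contained) route than the paper. Both arguments begin the same way: the $t=0$ case of the hypothesis together with Lemma~\ref{semi is measure} produces the extension to a measure $\mu$, and what remains is translation boundedness. At that point the paper simply observes that the hypothesis bounds $\|\mu*f\|_\infty$ uniformly over the unit ball $K_2^U(G)$ and then invokes the external identity $\|\mu\|_{U} = \sup\{\|\mu*f\|_\infty : f\in K_2^U(G)\}$ from \cite[Cor.~3.4]{SS2}, which is precisely the statement that translation boundedness can be tested with $K_2$-functions alone. You instead re-derive the substance of that corollary on the spot: the approximate-identity step upgrades the uniform-in-$t$ estimate from $K_2(G)$ to all of $\Cc(G)$ (using that $\Cc*\Cc\subseteq K_2$, that convolution with a normalized positive kernel does not increase the sup-norm, and that support enlargement is confined to a fixed $\overline{U}$), and the total-variation formula recalled in the preliminaries then converts the upgraded estimate into a bound on $|\vartheta|(t+K)$ uniform in $t$. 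The bookkeeping with $K$, $K'=K+\overline{U}$ and $K''=K'+\overline{U}$ is handled correctly, and the passage to the limit is legitimate because for each fixed $t$ the measure is sup-norm continuous on functions supported in the fixed compact set $t+K'$ while the bound $C_{K'}\|f\|_\infty$ is independent of $\alpha$ and $t$. The trade-off is that the paper's proof is two lines modulo the citation, whereas yours is longer but does not depend on \cite{SS2}; as a small bonus, your forward direction is also argued directly from translation boundedness rather than deferred to Lemma~\ref{semi is measure}.
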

\begin{proof}
$\Longrightarrow$: This follows from Lemma~\ref{semi is measure}.

\medskip

\noindent $\Longleftarrow$: By Lemma~\ref{semi is measure}, there exists some measure $\mu$ such that
\[
\vartheta(f)=\mu(f)
\]
for all $f \in K_2(G)$.
Next, fix some open pre-compact set $U$, and let $K$ be the closure of $-U$. Set
\[
\cF:= \{ f \in K_2(G)\, :\, \supp(f) \subseteq -U,\, \|f\|_\infty \leqslant 1 \} \,.
\]
Then, one has
\[
| (\mu*f)(t)|= | \mu (T_tf^\dagger)| \leqslant C_K\, \| f \|_\infty \leqslant C_K
\]
for all $f \in \cF$ and all $t \in G$, and hence
\[
\|f*\mu \|_\infty \leqslant C_K \,.
\]
Then, \cite[Cor.~3.4]{SS2} gives
\[
\| \mu \|_U \leqslant C_K < \infty \,,
\]
which completes the claim.
\end{proof}

Later, after we talk about convolution for measures, we will give other characterizations for when a distribution is a translation bounded measure, similar to \cite{SS2}.

\smallskip

Next, exactly as for measures, we can prove that positivity for semi-measures implies continuity in the inductive topology. In particular, any positive semi-measure is a measure.

\begin{lemma}\label{pos implies measure}
Let $\vartheta:K_2(G)\to \C$ be a positive semi-measure. Then, $\vartheta$ can be extended to a measure.
\end{lemma}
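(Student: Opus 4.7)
The goal is to verify the hypothesis of Lemma~\ref{semi is measure}: for every compact $K \subseteq G$, produce a constant $C_K > 0$ such that $|\vartheta(f)| \leqslant C_K \|f\|_\infty$ for all $f \in K_2(G)$ with $\supp(f) \subseteq K$. The strategy is the classical proof that a positive linear functional on $\Cc(G)$ is automatically a Radon measure, but one must do extra work because $K_2(G)$ is not a sublattice of $\Cc(G)$ --- positive parts of real functions in $K_2(G)$ need not lie in $K_2(G)$.

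First I would establish that $K_2(G)$ is closed under taking real and imaginary parts. Writing $f = f_1 + i f_2$ and $g = g_1 + i g_2$ in $\Cc(G)$ with $f_j, g_j$ real, the identity
\[
f*g = (f_1*g_1 - f_2*g_2) + i(f_1*g_2 + f_2*g_1)
\]
shows $\re(f*g), \Im(f*g) \in K_2(G)$. Passing to complex linear combinations, any $h \in K_2(G)$ decomposes as $h = h_1 + i h_2$ with real $h_j \in K_2(G)$, and $\|h_j\|_\infty \leqslant \|h\|_\infty$ and $\supp(h_j) \subseteq \supp(h)$.

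Next I would construct a nonnegative ``tent'' function $\phi \in K_2(G)$ with $\phi \geqslant 1$ on $K$. Choose $\psi \in \Cc(G)$ with $\psi \geqslant 0$ and $\psi \not\equiv 0$, supported in a small symmetric neighbourhood of $0$. Then $\psi * \widetilde{\psi} \in K_2(G)$ is nonnegative and equals $\|\psi\|_2^2 > 0$ at the origin, hence by continuity there is a neighbourhood $V$ of $0$ and $c > 0$ with $\psi * \widetilde{\psi} \geqslant c$ on $V$. Cover $K$ by finitely many translates $t_1 + V, \ldots, t_n + V$ and set
\[
\phi := \tfrac{1}{c} \sum_{i=1}^{n} T_{t_i}(\psi * \widetilde{\psi}) = \tfrac{1}{c} \sum_{i=1}^{n}(T_{t_i}\psi) * \widetilde{\psi} \in K_2(G),
\]
which satisfies $\phi \geqslant 0$ on $G$ and $\phi \geqslant 1$ on $K$.

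Finally, given $f \in K_2(G)$ with $\supp(f) \subseteq K$, decompose $f = f_1 + i f_2$ as above. Then $\|f_j\|_\infty \phi \pm f_j$ lies in $K_2(G)$ and is nonnegative on $G$ (it vanishes outside $K$, and on $K$ one has $\|f_j\|_\infty \phi \geqslant \|f_j\|_\infty \geqslant |f_j|$). By positivity of $\vartheta$,
\[
|\vartheta(f_j)| \leqslant \|f_j\|_\infty \vartheta(\phi) \leqslant \|f\|_\infty \vartheta(\phi),
\]
so $|\vartheta(f)| \leqslant 2 \vartheta(\phi) \|f\|_\infty$. Setting $C_K := 2\vartheta(\phi)$ and invoking Lemma~\ref{semi is measure} completes the proof. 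The only real obstacle is the one addressed in the first two steps: the lack of a lattice structure on $K_2(G)$ forces one to split into real/imaginary parts and to build a $K_2(G)$-bump function dominating $\mathbf{1}_K$ by hand, which is why the argument is slightly more delicate than the Riesz-representation version on $\Cc(G)$.
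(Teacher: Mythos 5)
Your proof is correct and follows essentially the same route as the paper: show $\operatorname{Re}f,\operatorname{Im}f\in K_2(G)$, dominate them by $\|f\|_\infty$ times a nonnegative $K_2(G)$-function that is at least $1$ on $K$, and use positivity to get the bound $|\vartheta(f)|\leqslant 2\vartheta(\phi)\,\|f\|_\infty$ required by Lemma~\ref{semi is measure}. The only difference is cosmetic: the paper cites \cite[Lem.~3.1]{NS20a} for the existence of the dominating function $g\in K_2(G)$ with $g\equiv 1$ on $K$, whereas you construct it explicitly from translates of $\psi*\widetilde{\psi}$, which is a perfectly valid (and self-contained) substitute.
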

\begin{proof}
The proof is similar to the proof of Theorem C1 in \cite{CRS2}.

Let $K \subseteq G$ be a compact set. Then, there exists some $g \in K_2(G) $ such that $g \equiv 1$ on $K$, see \cite[Lem.~3.1]{NS20a}. Also, for every $f\in K_2(G)$ with $\operatorname{supp}(f)\subseteq K$, we have
\begin{equation}\label{eq12}
|\operatorname{Re}f|,\, |\operatorname{Im}f| \leqslant \|f\|_{\infty}\, g \,.
\end{equation}

Next, we show that $\operatorname{Re}f, \operatorname{Im}f \in K_2(G)$. This is needed to make sure that $\vartheta(\operatorname{Re}f), \vartheta(\operatorname{Im}f)$ make sense. Note that by definition, any $f \in K_2(G)$ can be written as
\[
f=\sum_{k=1}^n c_k\, (g_k*h_k) \,,
\]
for some $c_k \in \C, g_k, h_k \in \Cc(G)$. Then, $\overline{f}=\sum_{k=1}^n \overline{c_k}\,  (\overline{g_k}* \overline{h_k}) \in K_2(G)$, and we obtain
\[
 \operatorname{Re}f   =\frac{1}{2} \left(f+\overline{f} \right) \in K_2(G) \qquad \text{ and } \qquad
 \operatorname{Im}f   =\frac{1}{2\im} \left(f-\overline{f} \right) \in K_2(G) \,.
\]
Now, since $\operatorname{Re}f, \operatorname{Im}f \in K_2(G)$ and $\vartheta$ is positive, \eqref{eq12} gives
\[
-\vartheta(\|f\|_{\infty}g) \leqslant \vartheta(\operatorname{Re}f),\, \vartheta(\operatorname{Im}f) \leqslant \vartheta(\|f\|_{\infty}g) \,.
\]
This immediately implies $|\vartheta(\operatorname{Re}f)|,\, |\vartheta(\operatorname{Im}f)| \leqslant \vartheta(\|f\|_{\infty} g)$. Therefore, we obtain
\[
|\vartheta(f)| \leqslant |\vartheta(\operatorname{Re}f)| + |\vartheta(\operatorname{Im}f)| \leqslant 2\, \vartheta (\|f\|_{\infty}g) = 2 \vartheta (g) \, \|f\|_{\infty} \,.
\]
Since $c_K:= 2\, \vartheta(g)$ only depends on $K$ (and not on $f$), $\vartheta$ is continuous with respect to the inductive topology.

Finally, $\vartheta$ can be uniquely extended to a continuous functional $\vartheta:\Cc(G)\to \C$, because $K_2(G)$ is dense in $\Cc(G)$. Therefore, $\vartheta$ is a measure.
\end{proof}

The next result is an immediate consequence.

\begin{corollary}\label{cor 1}
A semi-measure  $\vartheta:K_2(G)\to \C$ is a measure if and only if it is a finite linear combination of positive semi-measures.  \qed
\end{corollary}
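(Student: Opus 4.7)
The plan is to prove the two implications separately; both are short given what has already been established.

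For the ``if'' direction, suppose $\vartheta = \sum_{k=1}^n c_k\, \vartheta_k$ with each $\vartheta_k$ a positive semi-measure. By Lemma~\ref{pos implies measure}, each $\vartheta_k$ extends (uniquely) to a measure on $G$. Since measures form a $\C$-vector space, the linear combination $\sum_{k=1}^n c_k\, \vartheta_k$ is a measure, and its restriction to $K_2(G)$ agrees with $\vartheta$. Hence $\vartheta$ is a measure.

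For the ``only if'' direction, suppose $\vartheta$ is a measure, i.e.\ extends to a linear functional on $\Cc(G)$ that is continuous in the inductive topology. First split into real and imaginary parts by writing $\vartheta = \mu + \im \nu$, where $\mu := \tfrac{1}{2}(\vartheta + \overline{\vartheta})$ and $\nu := \tfrac{1}{2\im}(\vartheta - \overline{\vartheta})$ are real (signed) measures. Then apply the Jordan (Hahn) decomposition to $\mu$ and $\nu$ in order to write
\[
\mu = \mu_+ - \mu_-, \qquad \nu = \nu_+ - \nu_-,
\]
with $\mu_\pm, \nu_\pm$ positive measures. Restricting everything to $K_2(G)$ then exhibits
\[
\vartheta = \mu_+ - \mu_- + \im\, \nu_+ - \im\, \nu_-
\]
as a finite linear combination of positive semi-measures (since every positive measure restricts to a positive semi-measure on $K_2(G)$).

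The only mildly delicate point is ensuring that the four positive measures $\mu_\pm, \nu_\pm$ are genuine positive \emph{semi-measures} on $K_2(G)$; this is automatic since they are positive on all of $\Cc(G) \supseteq K_2(G)$ and $K_2(G) \subseteq \Cc(G)$ contains nonnegative functions. No obstacle is expected, as the result is essentially just the combination of Lemma~\ref{pos implies measure} with the Jordan decomposition of complex Radon measures.
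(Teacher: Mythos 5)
Your proof is correct and is exactly the argument the paper intends: the corollary is stated as an immediate consequence of Lemma~\ref{pos implies measure} together with the standard decomposition of a complex Radon measure as $\mu_1-\mu_2+\im(\mu_3-\mu_4)$ with $\mu_j$ positive (the same decomposition the authors invoke explicitly in the proof of Proposition~\ref{prop tb}, (i)$\implies$(vi)). No issues.
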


\section{Convolutions}\label{sect conv}

In this section, we introduce the convolution of a semi-measure $\vartheta$ and a function $\phi \in K_2(G)$, and study some concepts around this convolution, which will play an important role latter.

\smallskip

Given a semi-measure $\vartheta$ and some $f \in K_2(G)$, we can define a function $\vartheta*f :G \to \C$, called \emph{convolution of $\vartheta$ and $f$} via
\begin{equation}\label{semi conv}
(\vartheta*f)(t)=\vartheta(T_t f^\dagger) \,.
\end{equation}
Note here that if $\vartheta$ is a measure, then \eqref{semi conv} coincide with the definition of the convolution between the measure $\vartheta$ and the function $f \in K_2(G) \subseteq \Cc(G)$. Moreover, if $\vartheta$ is a semi-measure on $\R^d$ which coincides on $K_2(\R^d) \cap \Cc^\infty(\R^d)$ with some distribution $\omega$, then we have $\vartheta*f=\omega*f$ for all $f \in K_2(\R^d) \cap \Cc^\infty(\R^d)$.

We will omit the proof of the next lemma, since all computations are straightforward.

\begin{lemma}  \label{lem:popert_sm}
\begin{itemize}
\item [(a)] $*: \SM(G) \times K_2(G) \to F(G)$ is a bilinear form.
\item [(b)] For all $t \in G, \vartheta \in \SM(G)$ and $f \in K_2(G)$, we have
$(T_t \vartheta)*f=\vartheta*(T_tf)=T_t(\vartheta*f)$.

\item [(c)] For all $\vartheta \in \SM(G)$ and $f \in K_2(G)$, we have
$(\vartheta*f)^\dagger=\vartheta^\dagger*f^\dagger$.
\item [(d)] For all $\vartheta \in \SM(G)$ and $f \in K_2(G)$, we have
$\widetilde{\vartheta*f}=\widetilde{\vartheta}*\widetilde{f}$.   \qed
\end{itemize}
\end{lemma}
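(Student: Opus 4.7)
The statement is a list of four algebraic identities for the convolution $\vartheta * f$, and my plan is to verify each one by evaluating both sides at an arbitrary point $t \in G$ and unpacking the definitions. The only ingredients I will need are the defining formula $(\vartheta*f)(t)=\vartheta(T_tf^\dagger)$, the linearity of $\vartheta$ as a functional on $K_2(G)$, and the elementary identities among the operators $T_t, \widetilde{\,}, \dagger$ acting on functions.

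For (a), the bilinearity is immediate: the map $\vartheta \mapsto \vartheta(T_tf^\dagger)$ is linear because the pointwise topology makes addition and scalar multiplication on $\SM(G)$ pointwise, and the map $f \mapsto T_tf^\dagger$ is linear, after which one uses that $\vartheta$ itself is a linear functional. For (b), the equality of $(T_t\vartheta)*f$, $\vartheta*(T_tf)$ and $T_t(\vartheta*f)$ at a point $s \in G$ reduces to showing
\[
\vartheta(T_{-t}T_sf^\dagger)=\vartheta(T_s(T_tf)^\dagger)=\vartheta(T_{s-t}f^\dagger),
\]
which follows from the identity $T_s(T_tf)^\dagger(x)=f(s-x-t)=T_{s-t}f^\dagger(x)$ and the additivity $T_{-t}T_s=T_{s-t}$.

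For (c), I will compute $(\vartheta*f)^\dagger(t)=\vartheta(T_{-t}f^\dagger)$ and $(\vartheta^\dagger*f^\dagger)(t)=\vartheta^\dagger(T_t f)=\vartheta((T_tf)^\dagger)$; then the pointwise identity $(T_tf)^\dagger(x)=f(-x-t)=T_{-t}f^\dagger(x)$ yields equality. For (d), I unpack
\[
\widetilde{\vartheta*f}(t)=\overline{(\vartheta*f)(-t)}=\overline{\vartheta(T_{-t}f^\dagger)}
\]
and
\[
(\widetilde{\vartheta}*\widetilde{f})(t)=\widetilde{\vartheta}(T_t\widetilde{f}^\dagger)=\overline{\vartheta\bigl(\widetilde{T_t\widetilde{f}^\dagger}\bigr)},
\]
and verify the pointwise identity $\widetilde{T_t\widetilde{f}^\dagger}(x)=f(-x-t)=T_{-t}f^\dagger(x)$ by two applications of the conjugation-reflection identity $\widetilde{g}^\dagger(x)=\overline{g(x)}$ together with $\widetilde{\widetilde{g}}=g$.

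There is really no hard step here; the only thing to be careful about is keeping track of signs and conjugations when $\dagger$ and $\widetilde{\,}$ interact with translations, and for that I would simply write $T_tf^\dagger(x)=f(t-x)$ once at the start and rely on it throughout.
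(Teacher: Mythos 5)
Your computations are correct and are exactly the straightforward unpacking of the definitions that the paper explicitly omits ("all computations are straightforward"), so the approach matches the intended one. The only cosmetic quibble is that the linearity in $\vartheta$ in part (a) comes from the vector-space structure on $\SM(G)$ (pointwise addition of functionals), not from the pointwise \emph{topology}, and one should note in passing that $K_2(G)$ is closed under $T_t$, $\dagger$, $\widetilde{\ }$ and complex conjugation so that all the evaluations of $\vartheta$ make sense.
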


In the case that $\vartheta$ is a measure, the convolution $\vartheta*f$ is a continuous function for all $f \in K_2(G)$. Similarly, if $\vartheta$ is a distribution and $f \in K_2(\R^d) \cap \Cc^\infty(\R^d)$, the convolution $\vartheta*f$ is infinitely differentiable, and hence continuous. The same is not true for semi-measures, as Example~\ref{ex1}
below shows. To make the details simpler, let us start with the following simple lemma.

\begin{lemma}\label{lin ind}
Let $f \in \Cc(\R)$ be any non-zero function. Then, $\{ T_tf :t \in \R \}$ is linearly independent.
\end{lemma}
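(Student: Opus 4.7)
The plan is to argue by induction on the number of distinct translates appearing in a putative non-trivial linear relation, using only the compact support and continuity of $f$. Suppose
\[
\sum_{k=1}^n c_k\, T_{t_k} f \;=\; 0
\]
holds with distinct real numbers $t_1 < t_2 < \cdots < t_n$; after discarding terms with zero coefficient I may assume every $c_k$ is non-zero, and I aim to show this forces $n = 0$. The base case $n=1$ is immediate, since $c_1 f \equiv 0$ together with $f \not\equiv 0$ forces $c_1 = 0$.

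For the inductive step, set $a := \inf \supp(f)$, which is a real number because $\supp(f)$ is non-empty and compact. Since $a \in \supp(f)$, every right neighbourhood of $a$ must contain a point where $f$ is non-zero: otherwise $f$ would vanish on some interval $(-\infty, a + \varepsilon)$ by continuity together with the fact that $f \equiv 0$ on $(-\infty, a)$, contradicting $a = \inf \supp(f)$. Hence I can fix $y \in [a,\, a + (t_2 - t_1))$ with $f(y) \neq 0$ and evaluate the relation at $x := y + t_1$. Then $x - t_1 = y$, while for $k \geq 2$
\[
x - t_k \;=\; y - (t_k - t_1) \;\leq\; y - (t_2 - t_1) \;<\; a,
\]
so $f(x - t_k) = 0$ because $f$ vanishes on $(-\infty, a)$. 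The relation collapses at $x$ to $c_1 f(y) = 0$, whence $c_1 = 0$, a contradiction. Applying the induction hypothesis to $\sum_{k=2}^n c_k T_{t_k} f = 0$ then yields $c_k = 0$ for all $k$.

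No serious obstacle arises: the entire proof is a careful bookkeeping of supports, and the only mildly delicate point is ensuring that $y$ can be selected in the prescribed half-open interval, which follows from $\supp(f)$ being the closure of $\{x : f(x) \neq 0\}$ together with continuity of $f$. A Fourier-analytic alternative would also be available, using that $\reallywidehat{f}$ is entire (Paley--Wiener) hence not identically zero, combined with linear independence of characters $\chi \mapsto \chi(t_k)$ for distinct $t_k$; but the support argument above is more elementary and avoids invoking any Fourier theory.
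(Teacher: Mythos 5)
Your proof is correct and follows essentially the same strategy as the paper: take $a$ to be the left endpoint of the support, choose a point $y$ with $f(y)\neq 0$ and $a \leqslant y < a + (t_2 - t_1)$, and evaluate the relation at $y + t_1$ so that all translates except the first vanish, forcing $c_1 = 0$. The induction wrapper is cosmetic (the paper derives the same contradiction directly from a putative non-trivial relation), and your justification that $y$ can be chosen in the half-open interval is a careful version of the paper's corresponding step.
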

\begin{proof}
Assume by contradiction that the set is linearly dependent. Then, there exists some $t_1 <t_2<\ldots< t_n \in \R$ and $c_1,\ldots, c_k \in \C$ such that
$\sum_{j=1}^n c_j T_{t_j}f =0$.
This means that
\begin{equation}\label{eq2}
\sum_{j=1}^n c_j f(x-t_j) =0
\end{equation}
for all $x \in \R$.

Let $a := \inf \{ x \in \R : f(x) \neq 0 \}$, which is a real number since $f \in \Cc(\R^d)$ is not the zero function. Then, there exists some $b \in \R$ such that
\[
a < b <a+t_2-t_1 \quad \mbox{ and }\quad f(b) \neq 0 \,.
\]
Setting $x=b+t_1$ in \eqref{eq2} gives
\[
c_1f(b)+c_2f(b+t_1-t_2)+c_3f(b+t_1-t_3)+...+c_nf(b+t_1-t_n)=0 \,.
\]
Now, $b< a+t_2-t_1$ implies $b+t_1-t_2<a$. Hence, we obtain
\[
b+t_1-t_n \leqslant b+t_1-t_2 <a
\]
for all $2 \leqslant j \leqslant n$.
Therefore, the definition of $a$ gives
\[
f(b+t_1-t_2)=f(b+t_1-t_3)=\ldots=(b+t_1-t_n)=0 \,,
\]
and hence $c_1f(b)=0$.
But this contradicts the fact that $c_1 \neq 0$ and $f(b) \neq 0$.
\end{proof}

\begin{example}\label{ex1}
Fix some $0 \neq f \in K_2(G)$, and let $B$ be a (Hamel) basis for $K_2(G)$ as a $\C$-vector space, with the property that $T_tf \in B$ for all $t \in G$. Such a basis exists by Lemma~\ref{lin ind}. Let $\varsigma_{f,B}$ be the semi-measure from Example~\ref{ex2}.
Then,
\[
(\varsigma_{f,B}*f^\dagger)(t)=
\begin{cases}
1 \,, & \mbox{ if } t=0 \,, \\
0 \,, & \mbox{ otherwise}\,.
\end{cases}
\]
In particular, $\varsigma_{f,B}*f^\dagger$ is not continuous.
\end{example}
\begin{proof}
For simplicity let $B'$ be such that
\[
B= \{ T_tf\, :\, t \in \R \} \sqcup B' \,,
\]
that is
\[
B' =B \backslash \{ T_tf\, :\, t \in \R \} \,.
\]
Now, let $t \in G$ be fixed but arbitrary. Then,
\[
(\varsigma_{f,B}*f^\dagger)(t)=  \varsigma_{f,B}(T_tf) \,.
\]
If $t=0$, this gives
\[
(\varsigma_{f,B}*f^\dagger)(t)=  \varsigma_{f,B}(f)=1
\]
by definition. Next, if $t \neq 0$, we have
\[
T_tf= 1 \cdot T_tf+ \sum_{s \in \R, s\neq t} 0\cdot T_sf+ \sum_{ g \in B'} 0 \cdot g
\]
and hence, by definition
\[
(\varsigma_{f,B}*f^\dagger)(t)=  \varsigma_{f,B}(T_tf)=0 \,.
\]
\end{proof}

\begin{remark} The semi-measure $\varsigma_{f,B}$ from Example~\ref{ex1} is not a measure.
\end{remark}

Next, let us introduce the following definition.

\begin{definition}
A semi-measure $\vartheta$ is called \emph{semi-translation bounded} if
\[
\vartheta*f \in \Cu(G) \qquad \text{ for all } f \in K_2(G) \,.
\]
We denote the space of semi-translation bounded semi measures by $\SM^\infty(G)$.
\end{definition}

The definition of translation boundedness for measures usually only requires that the convolution $\mu*f$ is bounded for all $f \in \Cc(G)$.
This is because in this case, as mentioned above, $\mu*f$ is automatically a continuous function. Moreover, \cite[Thm.~1.1]{ARMA1} implies that, if $\mu*f$ is bounded for all $f \in \Cc(G)$, it is also uniformly continuous. Therefore, requiring for a measure that the convolution $\mu*f$ is bounded for all $f \in \Cc(G)$ is equivalent to the condition $\mu*f \in \Cu(G)$ for all $f \in \Cc(G)$.

Similarly, a tempered distribution $\omega$ is called translation bounded \cite{ST} if the convolution $\omega*f$ is bounded for all $f \in \cS(\R^d)$. Again, in this case we do not need to worry about uniform continuity since we get it for free. Indeed, by \cite[Prop.~2.1]{ST}, a tempered distribution is translation bounded if and only if $\omega*f \in \Cu(\R^d)$ for all $f \in \cS(\R^d)$.

Now, due to the lack of continuity in the definition of semi-measures, the same does not seem to be true for this class, and we want to add the continuity as part of the definition. This leads to the following natural question:

\begin{question}
Does there exists a semi-measure $\vartheta$ which is not bounded, but such that for all $g \in K_2(G)$ the function $\vartheta*g$ is bounded?
\end{question}

We suspect that the answer is yes, and that the semi-measure $\varsigma_{f,B}$ in Example~\ref{ex1} provides such an example for the right choice of a Basis $B$, but due to the mystic nature of Hamel bases for this space we could not construct an explicit example. As $\varsigma_{f,B}*f^\dagger$ is bounded but not continuous, if one can show that there exists a Hamel basis $B$ such that $\varsigma_{f,B}*g$ is bounded for all $g \in K_2(G)$, the claim will follow. Note that if $K_2(G)$ admits a basis $B$ which is translation invariant, meaning that for all $t \in \R$, $g \in B$ we have $T_tg \in B$, then such an example is trivial to construct.

The following result is an immediate consequence of the definition and Lemma~\ref{lem:popert_sm}.

\begin{lemma}
Let $\vartheta \in \SM(G)$. Then, $\vartheta \in \SM^\infty(G)$ if and only if there exists some linear operator $T : K_2(G) \to \Cu(G)$ which commutes with translates, such that
\[
\vartheta(f)= (Tf)(0)\qquad \text{ for all } f \in K_2(G) \,.
\]
Moreover, in this case we have
\[
T(f)= \vartheta*f^\dagger \qquad  \text{ for all } f \in K_2(G) \,.
\]
In particular, $T$ is unique.  \qed
\end{lemma}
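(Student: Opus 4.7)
The plan is to unpack the definition of semi-translation boundedness by reading off the operator $T$ from the convolution $\vartheta \ast f^\dagger$. For the $(\Longrightarrow)$ direction, I would set $Tf := \vartheta \ast f^\dagger$ and verify the three required properties. First, $T$ maps $K_2(G)$ into $\Cu(G)$: since $K_2(G) = \Span\{g\ast h : g,h \in \Cc(G)\}$ and $(g\ast h)^\dagger = g^\dagger \ast h^\dagger$, the involution $\dagger$ preserves $K_2(G)$, whence the assumption $\vartheta \in \SM^\infty(G)$ yields $\vartheta \ast f^\dagger \in \Cu(G)$. Linearity of $T$ is the bilinearity statement in Lemma \ref{lem:popert_sm}(a). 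Translation-equivariance follows from the identity $(T_sf)^\dagger = T_{-s}f^\dagger$ together with Lemma \ref{lem:popert_sm}(b): $T(T_s f) = \vartheta\ast T_{-s}f^\dagger = T_{-s}(\vartheta \ast f^\dagger) = T_{-s}(Tf)$. Finally, $Tf(0) = (\vartheta \ast f^\dagger)(0) = \vartheta\bigl(T_0 (f^\dagger)^\dagger\bigr) = \vartheta(f)$.

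For the $(\Longleftarrow)$ direction, assume $T$ with the stated properties is given and define $\vartheta(f) := (Tf)(0)$. Linearity of $T$ makes $\vartheta$ a linear functional on $K_2(G)$ and hence a semi-measure. To show $\vartheta \in \SM^\infty(G)$, compute for $f \in K_2(G)$ and $t \in G$:
\[
(\vartheta \ast f)(t) = \vartheta(T_t f^\dagger) = T(T_t f^\dagger)(0).
\]
Translation-equivariance of $T$ converts the inner translate on $K_2(G)$ into a translate of $T(f^\dagger) \in \Cu(G)$, evaluated at a point, so the function $t \mapsto (\vartheta \ast f)(t)$ is obtained by sampling a translate of an element of $\Cu(G)$, and therefore lies in $\Cu(G)$.

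The explicit formula and uniqueness come out of the same computation: applying translation-equivariance at a general $s \in G$ gives $Tf(s) = T(T_s f)(0) = \vartheta(T_s f) = (\vartheta \ast f^\dagger)(s)$, so $T$ is determined by $\vartheta$, and the claimed identity $T(f) = \vartheta \ast f^\dagger$ holds. The main obstacle — or rather the one subtle point — is purely bookkeeping around the involution $\dagger$ and the precise sign in "commutes with translates": the convention needs to be consistent with $\vartheta \ast T_s f = T_s(\vartheta \ast f)$ from Lemma \ref{lem:popert_sm}(b) and with $(T_sf)^\dagger = T_{-s} f^\dagger$, so that the equivariance reading $T \circ T_s = T_{-s} \circ T$ matches what is produced by the convolution formula. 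With the signs aligned, the rest is formal.
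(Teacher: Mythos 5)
Your proof is correct and is exactly the argument the paper intends: the paper omits the proof, calling the lemma an immediate consequence of the definition and Lemma~\ref{lem:popert_sm}, and those are precisely the two ingredients you use (plus the observation that $K_2(G)$ is stable under $f\mapsto f^\dagger$, which is needed and which you verify). Your flag about the sign is also accurate: with $T(f)=\vartheta*f^\dagger$ one gets $T\circ T_s=T_{-s}\circ T$ rather than literal commutation, so the statement's ``commutes with translates'' must be read with that convention — and, as you note, the reverse implication is insensitive to the choice since $\Cu(G)$ is closed under $g\mapsto g^\dagger$.
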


Note here that the operator $T$ may not be bounded.
We will denote this operator by $T_\vartheta$ and refer to it as the \emph{operator induced by $\vartheta$}.

The next result is straight forward.

\begin{lemma}\label{lemms smtb} Let $\mu$ be a measure.
\begin{itemize}
\item[(a)] If $\mu$ is translation bounded, then it is semi-translation bounded as a semi-measure.
\item[(b)] If $\mu$ is positive definite, then it is semi-translation bounded as a semi-measure.
\item[(c)] If $\mu$ is positive, then $\mu$ is semi-translation bounded as a semi-measure if and only if $\mu \in \cM^\infty(G)$.
\end{itemize}
\end{lemma}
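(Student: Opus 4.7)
The plan is to treat the three parts in order, exploiting that for a measure $\mu$ the convolution $\mu*f$ as semi-measure agrees with the usual measure-function convolution, since $(T_t f^\dagger)(x)=f(t-x)$ gives $\mu(T_t f^\dagger)=\int f(t-x)\,\mathrm d\mu(x)=(\mu*f)(t)$.

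For part (a), I would note that $K_2(G)\subseteq \Cc(G)$, so the assertion reduces to the well-known fact that if $\mu\in\cM^\infty(G)$ and $f\in\Cc(G)$ then $\mu*f\in\Cu(G)$. Boundedness follows by estimating $|(\mu*f)(t)|\leqslant \|f\|_\infty\,|\mu|(t-\supp(f))\leqslant \|f\|_\infty\,\|\mu\|_{\supp(f)}$, and uniform continuity follows from the uniform continuity of $f$ together with the translation boundedness of $|\mu|$ applied to a common support set.

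For part (b), the cleanest route is to invoke the classical fact that every positive definite measure on a LCAG is translation bounded (Argabright--de Lamadrid); then (b) is immediate from (a). If one prefers a self-contained argument, one can instead use that for $f\in K_2(G)$, writing $f=\sum c_k(g_k*h_k)$ as in the proof of Lemma~\ref{pos implies measure}, $\mu*f$ is a linear combination of convolutions $\mu*(g_k*h_k)=(\mu*g_k)*h_k$ with continuous functions, and apply the positive definiteness to bound each piece.

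Part (c) has one nontrivial direction; the backward implication is (a) applied to $\mu$. For the forward direction, fix a compact $K\subseteq G$. The goal is to produce $f\in K_2(G)$ with $f\geqslant 0$ and $f\geqslant c\,\mathbbm 1_{-K}$ for some $c>0$: choose an open precompact neighborhood $V$ of $0$ and $\varphi\in\Cc(G)$ with $\varphi\geqslant 0$, $\varphi\equiv 1$ on $V$. Then $\varphi*\varphi^\dagger\in K_2(G)$ is nonnegative and strictly positive on some neighborhood $W$ of $0$; covering the compact set $-K$ by finitely many translates $t_i+W$ and summing the corresponding translates of $\varphi*\varphi^\dagger$ produces $f\in K_2(G)$ with $f\geqslant c\,\mathbbm 1_{-K}$ for some $c>0$. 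Since $\mu$ is positive, for every $t\in G$ we then have
\[
(\mu*f)(t)=\int f(t-x)\,\mathrm d\mu(x)\geqslant c\int_{t+K}\mathrm d\mu(x)=c\,\mu(t+K).
\]
By assumption $\mu*f\in\Cu(G)$ is bounded, so $\mu(t+K)\leqslant c^{-1}\|\mu*f\|_\infty$ uniformly in $t$, proving $\mu\in\cM^\infty(G)$. The only step that requires care is the construction of a nonnegative $f\in K_2(G)$ dominating $c\,\mathbbm 1_{-K}$, but this is a routine partition-of-unity-type argument and is the only real obstacle.
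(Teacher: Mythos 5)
Parts (a) and (c) of your proposal are correct and essentially the same as the paper's argument: (a) is immediate from $K_2(G)\subseteq\Cc(G)$, and for (c) the paper likewise picks $f\in K_2(G)$ with $f\geqslant 1_K$ and bounds $|\mu|(t+K)=\mu(t+K)\leqslant (\mu*f^\dagger)(t)\leqslant \|\mu*f^\dagger\|_\infty$; your explicit construction of such an $f$ by summing translates of $\varphi*\varphi^\dagger$ is a reasonable way to justify its existence.

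Part (b), however, contains a genuine error. The ``classical fact'' you invoke --- that every positive definite measure on a LCAG is translation bounded --- is false. Argabright and de Lamadrid exhibit a positive definite measure that is \emph{not} translation bounded (Prop.~7.1 of their memoir), and the paper relies on exactly this example in the remark immediately following the lemma to show that the converse of part (a) fails; if your reduction of (b) to (a) were valid, part (b) would be a vacuous special case of (a) and that remark would be contradictory. Your fallback argument is also incomplete: writing $f=\sum_k c_k(g_k*h_k)$ and $\mu*(g_k*h_k)=(\mu*g_k)*h_k$ presupposes that $\mu*g_k$ is a well-defined continuous function, which is precisely what may fail for a measure that is not translation bounded; positive definiteness only gives direct control of convolutions of the form $\mu*(u*\widetilde{u})$. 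The correct route, which the paper takes, is to use that $\mu*(u*\widetilde{u})$ is a \emph{continuous positive definite function} (hence bounded and uniformly continuous) for every $u\in\Cc(G)$ when $\mu$ is positive definite, and then to observe that by the polarisation identity every element of $K_2(G)$ is a finite linear combination of functions of the form $u*\widetilde{u}$ with $u\in\Cc(G)$, so that $\mu*g\in\Cu(G)$ for all $g\in K_2(G)$.
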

\begin{proof}
(a) This follows from $K_2(G) \subseteq \Cc(G)$.

\medskip

\noindent(b) This follows from \cite[Lem.~4.9.24]{MoSt} and the polarisation identity (see for example \cite[p. 244]{MoSt}).

\medskip

\noindent (c) $\Longleftarrow$: This follows from (a).

\smallskip

\noindent $\Longrightarrow$: Let $K \subseteq G$ and let $f \in K_2(G)$ be such that $f \geqslant 1_K$. Then, we have
\[
| \mu|(t+K)= \mu(t+K) \leqslant \mu(T_{t}f) = (\mu*f^\dagger)(t) \leqslant \| \mu*f^\dagger \|_\infty \,.
\]
for all $t \in G$. This immediately gives
\[
\| \mu \|_K \leqslant  \| \mu*f^\dagger \|_\infty  < \infty \,.
\]
\end{proof}

\begin{remark}

The converse of Lemma~\ref{lemms smtb} (a) is not true. Indeed, by \cite{ARMA1}, there exists some measure $\mu$ which is positive definite but not translation bounded \cite[Prop. 7.1]{ARMA1}. Such a measure must be semi-translation bounded by Lemma~\ref{lemms smtb}, but it is not translation bounded as a measure.
\end{remark}

\medskip

We can now state and prove the following extended characterization of translation bounded measures semi-measures.
First, fix an open pre-compact set $U \subseteq G$, and set
\begin{align*}
F_B(G)&:= \{ f :G \to \C : \|f \|_\infty < \infty \} \,, \\
K_2(U) &:= \{ f \in K_2(G): \supp(f) \subseteq U \} \,.
\end{align*}
Note here that $(F_B(G),  \| \cdot \|_\infty)$ is a Banach space, while $(K_2(U), \| \cdot \|_\infty)$ is a normed space, which is typically not complete.

Let $K_2^U(G)$ denote the unit ball in $K_2(U)$, that is
\begin{equation*}
K_2^{U}(G):=\{ f \in K_2(G)\, :\, \supp(f) \subseteq U,\, \|f \|_\infty \leqslant 1 \} \,.
\end{equation*}

\begin{proposition}\label{prop tb}
Let $\vartheta \in \cSM(G)$, and let $U$ be a fixed open pre-compact set. Then, the following statements are equivalent.
\begin{itemize}
\item[(i)] $\vartheta$ can be extended to a translation bounded measure.
\item[(ii)] For all compact sets $K \subseteq G$, there exists a constant $C_K>0$ such that
\[
| \vartheta(T_tf) | \leqslant C_K\, \| f\|_\infty
\]
for all $f \in K_2(G)$ with $\supp(f) \subseteq K$ and all $t \in G$.
\item[(iii)] One has
\begin{equation}\label{eq11}
\sup \{ \| \vartheta*f \|_\infty\, :\, f \in  K_2^{U}(G) \} <\infty \,.
\end{equation}
\item[(iv)] The operator
\[
  T_\vartheta: ( K_2(U),  \| \cdot \|_\infty) \to (F_B(G), \| \cdot \|_\infty) \,,\qquad  T_\vartheta(f)= \vartheta*f
\]
is bounded.
\item[(v)] $\vartheta$ is semi-translation bounded, and the operator
\[
T_\vartheta: ( K_2(U),  \| \cdot \|_\infty) \to (\Cu(G), \| \cdot \|_\infty) \,, \qquad T_\vartheta(f)= \vartheta*f
\]
is bounded.
\item[(vi)] $\vartheta$ is a finite linear combination of positive semi-measures in $\SM^\infty(G)$.
\end{itemize}
\end{proposition}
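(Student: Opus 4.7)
The plan is to use (i) as the central hub and to prove that each of (ii)--(vi) is equivalent to (i). The equivalence (i) $\Leftrightarrow$ (ii) is Lemma~\ref{lem 2}; (iii) $\Leftrightarrow$ (iv) is the standard equivalence between operator boundedness and uniform boundedness on the unit ball; and (v) $\Rightarrow$ (iv) is immediate from the norm-preserving inclusion $\Cu(G) \hookrightarrow F_B(G)$. For (i) $\Rightarrow$ (v), Lemma~\ref{lemms smtb}(a) delivers semi-translation boundedness, while the standard measure-theoretic estimate
\[
|(\vartheta*f)(t)| \leq \|f\|_\infty \, |\vartheta|(t-\supp(f))
\]
yields the operator bound $\|\vartheta*f\|_\infty \leq \|\vartheta\|_{\overline{-U}}\,\|f\|_\infty$ for $f \in K_2(U)$. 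For (i) $\Leftrightarrow$ (vi), write $\vartheta = \vartheta_1^+ - \vartheta_1^- + \im(\vartheta_2^+ - \vartheta_2^-)$ via the Jordan decomposition of the real and imaginary parts of $\vartheta$: each $\vartheta_j^\pm$ is dominated by $|\vartheta|$ and hence is a positive translation bounded measure, so by Lemma~\ref{lemms smtb}(c) it lies in $\SM^\infty(G)$. Conversely, a finite combination of positive semi-measures in $\SM^\infty(G)$ is, by Lemma~\ref{pos implies measure} and Lemma~\ref{lemms smtb}(c), a combination of positive translation bounded measures and hence itself translation bounded.

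What remains is the bridge (ii) $\Leftrightarrow$ (iii)/(iv), which rests on the convolution identity $(\vartheta*f)(t) = \vartheta(T_t f^\dagger)$. For (ii) $\Rightarrow$ (iii), applying (ii) with $K = \overline{-U}$ to the function $f^\dagger$ (supported in $\overline{-U}$ with $\|f^\dagger\|_\infty \leq 1$ for $f \in K_2^U(G)$) yields $|(\vartheta*f)(t)| \leq C_K$ uniformly in $t$, which is exactly (iii). The difficult direction is (iv) $\Rightarrow$ (ii): the identity at once gives $|\vartheta(T_t g)| \leq C\|g\|_\infty$ for all $g \in K_2(-U)$ and $t \in G$, which is (ii) only for the single compact set $\overline{-U}$, and the key step is to upgrade it to every compact $K$. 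The plan is to cover $K$ by finitely many translates $x_j + (-U)$ and to decompose $f \in K_2(G)$ with $\supp(f) \subseteq K$ as $f = \sum_j T_{x_j} h_j$ with $h_j \in K_2(-U)$; because $K_2(G)$ is not stable under multiplication by $\Cc(G)$, this decomposition has to be built not from a partition of unity on $f$ itself but by applying a continuous partition of unity to the factors $g_k$ in the canonical representation $f = \sum_k c_k\, g_k * h_k$, then reassembling the resulting convolutions. Once (ii) is established on every compact $K$, Lemma~\ref{lem 2} delivers (i), closing the loop.

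The main obstacle is precisely this upgrade from local (on $K_2(U)$) to global (on all $K_2(K)$) boundedness in (iv) $\Rightarrow$ (ii); it is delicate because $K_2(G)$ is not preserved by pointwise multiplication by $\Cc(G)$ functions, so the naive strategy of multiplying $f$ by a partition of unity subordinate to the cover of $K$ produces pieces outside $K_2(G)$. The convolution-representation workaround described above is carried out at the level of the $\Cc(G)$-factors and then lifted through convolution, but requires careful sup-norm bookkeeping. A cleaner alternative, which I would pursue in parallel, is to adapt the argument of \cite[Cor.~3.4]{SS2} (already used in the proof of Lemma~\ref{lem 2}) to semi-measures, since that corollary directly converts a uniform convolution bound on $K_2^U(G)$ into the translation boundedness of the underlying measure, bypassing the need to extend (ii) to arbitrary compact $K$.
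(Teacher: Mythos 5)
Most of your cycle matches the paper's: (i)$\iff$(ii) via Lemma~\ref{lem 2}, (ii)$\implies$(iii) by evaluating $\vartheta(T_tf^\dagger)$, (iii)$\iff$(iv) as a unit-ball statement, (i)$\implies$(v)$\implies$(iv), and (i)$\iff$(vi) via the Jordan decomposition together with Lemma~\ref{pos implies measure} and Lemma~\ref{lemms smtb}(c). The genuine gap is exactly where you flag the ``main obstacle'': closing the loop from (iii)/(iv) back to (i)/(ii). Your primary plan --- writing $f=\sum_k c_k\, g_k*h_k$ and applying a partition of unity to the factors --- cannot deliver the bound $|\vartheta(T_tf)|\leqslant C_K\|f\|_\infty$ demanded by (ii). The supports and sup-norms of the factors $g_k,h_k$, and hence the number and size of the pieces $T_{x_j}h_j$ you produce, are controlled by the chosen \emph{representation} of $f$, not by $\supp(f)$ and $\|f\|_\infty$ (a function of small support and small sup-norm can be a combination of convolutions of factors with huge supports and large norms). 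The resulting estimate is of the form $C\sum_j\|h_j\|_\infty$, which is not dominated by $C_K\|f\|_\infty$ uniformly over all $f$ supported in $K$, so condition (ii) is not obtained. Your fallback --- ``adapt \cite[Cor.~3.4]{SS2}'' --- is the right instinct but circular as stated: that corollary converts a uniform convolution bound into translation boundedness \emph{of a measure}, and the entire content of this direction is to produce the measure in the first place.

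The paper's actual route is (iii)$\implies$(i), and the missing ingredient is a density argument rather than a decomposition inside $K_2(G)$. Fix a compact $K\subseteq U$ with non-empty interior. For every $t\in G$ and every $f\in K_2(G)$ with $\supp(f)\subseteq t-K$, one has $(T_{-t}f)^\dagger\in K_2(U)$ and $\vartheta(f)=\bigl(\vartheta*(T_{-t}f)^\dagger\bigr)(t)$, so (iii) gives $|\vartheta(f)|\leqslant C\,\|f\|_\infty$ with $C$ independent of $t$. Since such $f$ are dense in the Banach space $C(G:t-K)$ of continuous functions supported in $t-K$, the functional $\vartheta$ extends boundedly to each $C(G:t-K)$; the partition of unity is then performed in $\Cc(G)$ --- where multiplication by continuous functions is unproblematic --- to glue these extensions into a measure $\mu$ extending $\vartheta$. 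Only then is \cite[Cor.~3.4]{SS2} applied to $\mu$, and the uniformity of $C$ in $t$ yields $\|\mu\|_{-U}<\infty$. This ``extend first, localize afterwards'' step is precisely what circumvents the failure of $K_2(G)$ to be stable under multiplication by $\Cc(G)$, and it is absent from your proposal.
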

\begin{proof}
(i)$\iff$(ii) This is a consequence of Lemma~\ref{lem 2}.

\medskip

\noindent (ii)$\implies$(iii) Let $K$ be any compact set containing $-U$. Then, $\supp(f^\dagger) \subseteq K$ for all $f \in  K_2^{U}(G)$, and  by (ii), we have
\[
| (\vartheta*f)(t) | = | \vartheta(T_tf^\dagger)| \leqslant C_K \|f ^\dagger \|_\infty = C_K
\]
for all $t \in G$. This shows that
\[
\| \vartheta*f \|_\infty \leqslant C_K \qquad \text{ for all } f \in K_2(U) \,.
\]

\smallskip

\noindent (iii)$\implies$(i) Fix a compact set $K \subseteq U$ with non-empty interior. It is easy to see that
\[
\cF_t:= \{ f \in K_2(G)\, :\, \supp(f) \subseteq t-K \}
\]
is dense in the Banach space
\[
C(G:t-K)=\{ g \in \Cc(G)\, :\, \supp(g) \subseteq t-K \}
\]
for all $t \in G$. Eq.~\eqref{eq11} gives that  $f\mapsto \vartheta*f$ is a bounded operator on $\cF_t$. Therefore, $f \mapsto \vartheta(f)$ is also bounded, and hence, it extends uniquely to a continuous operator
\[
\mu: C(G:t-K) \to \C \,.
\]
Since $K$ has non-empty interior, by a standard partition of unity argument, $\mu$ extends to a measure $\mu \in \cM(G)$. Moreover, since this is an extension of $\vartheta$, we have
\[
\sup \{ \| \mu*f \|_\infty\, :\, f \in  K_2^{U}(G) \} <\infty  \,.
\]
Finally, by \cite[Cor.~3.4]{SS2}, we have
\[
\| \mu \|_{-U} =\sup \{ \| \mu*f \|_\infty\, :\, f \in  K_2^{U}(G) \} <\infty  \,.
\]
This proves that $\mu \in \cM^\infty(G)$.

\medskip

\noindent (iii)$\implies$(iv) This follows immediately from the fact that $K_2^U(G)$ is the unit ball in $K_2(G)$.

\medskip

\noindent (i)$\implies$(v)$\implies$(iv) This is obvious.

\medskip

\noindent (i)$\implies$(vi) We know that $\vartheta$ can be extended to a translation bounded measure $\mu$. Since $\mu$ is a translation bounded measures, there exist positive translation bounded measures $\mu_1,\mu_2,\mu_3,\mu_4$ such that
\[
\mu=\mu_1-\mu_2+\im(\mu_3-\mu_4) \,.
\]
Denoting by $\vartheta_j$ the restriction of $\mu_j$ to $K_2(G)$, the claim follows.

\medskip

\noindent (vi)$\implies$(i) This follows from Lemma~\ref{pos implies measure} and  Lemma~\ref{lemms smtb}(c).
\end{proof}

\subsection{Intertwining semi-measures}

By \cite[Lem.~1.28]{LSS}, every $\mathcal N$-representation admits a diffraction measure, or equivalently an autocorrelation (which is a semi-measure) if and only if it is intertwining. In this subsection we extend this concept to arbitrary semi-measures. We will see in the next sections that this concept is crucial for the Fourier theory of semi-measures.

We start by showing that the following conditions are equivalent, either of them can be used as the definition of intertwining.

\begin{lemma}\label{intert}
Let $\vartheta \in \SM^\infty(G)$. Then, the following statements are equivalent.
\begin{itemize}
\item[(i)] For all $f,g \in \Cc(G)$, we have
\[
\left( \vartheta*f \right)*g=\left( \vartheta*g \right)*f
\]
\item[(ii)] For all $f,g \in \Cc(G)$, we have
\[
\left( \vartheta*f \right)*g= \vartheta * (f*g) \,.
\]
\end{itemize}
\end{lemma}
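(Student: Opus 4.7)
Both (i) and (ii) implicitly require that $\vartheta*f$ and $\vartheta*g$ be defined, which holds for $f,g \in K_2(G)$; since $f*g \in K_2(G)$ already when $f,g \in \Cc(G)$, the argument will be carried out for $f,g \in K_2(G)$ (the sub-class of $\Cc(G)$ on which the statement is literally well posed). The easy direction (ii) $\Rightarrow$ (i) is immediate: applying (ii) with $(f,g)$ and with $(g,f)$ and using the commutativity $f*g = g*f$ of ordinary function convolution yields $(\vartheta*f)*g = \vartheta*(f*g) = \vartheta*(g*f) = (\vartheta*g)*f$.

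For (i) $\Rightarrow$ (ii), set $\phi_1 := (\vartheta*f)*g$ and $\phi_2 := \vartheta*(f*g)$. Both lie in $\Cu(G)$: the inclusion $f*g \in K_2(G)$ together with $\vartheta \in \SM^\infty(G)$ places $\phi_2$ there, while $\Cu(G)*\Cc(G) \subseteq \Cu(G)$ places $\phi_1$ there. The plan is to verify
\[
\phi_1 * h = \phi_2 * h \qquad \text{for all } h \in K_2(G) \,,
\]
whence $\phi_1 = \phi_2$ will follow by a routine approximate-identity argument in $\Cu(G)$. For the left-hand side, ordinary function-associativity gives $\phi_1 * h = (\vartheta*f)*(g*h)$. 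For the right-hand side, chain two applications of (i) with intervening reassociations and the identity $h*g = g*h$:
\[
\phi_2 * h \;\stackrel{(i)}{=}\; (\vartheta*h)*(f*g) \;=\; ((\vartheta*h)*f)*g \;\stackrel{(i)}{=}\; ((\vartheta*f)*h)*g \;=\; (\vartheta*f)*(g*h) \,.
\]
Thus both sides coincide, and the equality propagates to $\phi_1 = \phi_2$.

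\textbf{Main obstacle.} Since a semi-measure carries no a priori continuity, one cannot justify the associativity $\vartheta*(f*g) = (\vartheta*f)*g$ directly by Fubini, as one would for a measure. The content of hypothesis (i) is precisely to supply this missing ``weak Fubini'', encoding the commutativity of $\vartheta$ past a right-convolution by a $\Cc(G)$-function; once that is recognised the argument reduces to pure bookkeeping in the convolution algebra. The only further delicate point is the passage from ``$\phi_1 * h = \phi_2 * h$ for every $h \in K_2(G)$'' to ``$\phi_1 = \phi_2$'', but for $\phi_1,\phi_2 \in \Cu(G)$ this is standard: writing $h = h_1*h_2$ with $h_1,h_2 \in \Cc(G)$ and letting $h_1,h_2$ run through an approximate identity forces $\phi_1-\phi_2 = 0$ uniformly.
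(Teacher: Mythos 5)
Your argument is correct and follows essentially the same route as the paper: the easy direction uses commutativity of $f*g$, and for (i)$\Rightarrow$(ii) you test both sides against an arbitrary $h \in K_2(G)$, run the identical chain $(\vartheta*(f*g))*h=(\vartheta*h)*(f*g)=((\vartheta*h)*f)*g=((\vartheta*f)*h)*g=(\vartheta*f)*(g*h)=((\vartheta*f)*g)*h$ using (i) twice plus associativity of convolution of continuous functions, and conclude with an approximate identity. Your remark that the statement is only literally well posed for $f,g\in K_2(G)$ matches the paper, which also fixes $f,g,h\in K_2(G)$ in its proof.
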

\begin{proof}
(ii)$\implies$(i) This is immediate. Indeed, (ii) gives
\[
\left( \vartheta*f \right)*g = \vartheta * (f*g)  \qquad \text{ and } \qquad
\left( \vartheta*g \right)*f= \vartheta * (g*f)  \,.
\]
As $f*g=g*f$, the claim follows.

\medskip

\noindent (i)$\implies$(ii) Fix $f,g,h \in K_2(G)$. Then, (i) implies
\[
\left( \vartheta*h \right)*(f*g)=\left( \vartheta*(f*g) \right)*h
\qquad \text{ and } \qquad
\left( \vartheta*h \right)*f= \left( \vartheta*f \right)*h \,.
\]
Next, since $\vartheta \in \SM^\infty(G)$, we have  $\vartheta*h,  \vartheta*f \in \Cu(G)$. Thus, as $f,g,h \in K_2(G) \subseteq \Cc(G)$ the following convolutions of continuous functions make sense and
\begin{align*}
( \vartheta*(f*g) )*h
    &=( \vartheta*h )*(f*g)
     =(( \vartheta*h )*f)*g
     =( ( \vartheta*f )*h)*g  \\
    &= ( \vartheta*f )*(h*g)
     =( \vartheta*f )*(g*h)
     = ( ( \vartheta*f )*g)*h \,.
\end{align*}
Therefore, $ \vartheta*(f*g) ,   \left( \vartheta*f \right)*g \in \Cu(G)$, and for all $h \in K_2(G)$, we have
\[
( \vartheta*(f*g) )*h= ( ( \vartheta*f )*g)*h \,.
\]
Replacing $h$ by an approximate identity gives (ii).
\end{proof}

We can now introduce the following definition.

\begin{definition}
A semi-measure $\vartheta \in \SM^\infty(G)$ is called \emph{intertwining} if the equivalent conditions of Lemma~\ref{intert} hold.
\end{definition}

One could define intertwining for semi-measures which are not semi-translation bounded. In this case, one needs to take into account that $\vartheta*f$ could be non-measurable. For this reason, we include semi-translation boundedness as part of the definition. We will give in Example~\ref{ex7} an example of a semi-measure $\vartheta$ and two function $f,g \in K_2(G)$ such that $\vartheta*(f*g)$ and $(\vartheta*f)*g$ make sense but they are not equal.

\smallskip
\medskip

A natural question to ask at this point is the following:

\begin{question}\label{q4} Is every semi-translation bounded semi-measure $\vartheta$ intertwining?
\end{question}

We suspect that the answer is negative, but we could not construct an example. We will see below that every measure is intertwining as a semi-measure.

\smallskip

\begin{remark}
Let $\vartheta \in \SM^\infty(G)$, and let $T_\vartheta$ be the induced operator. Then, $\vartheta$ is intertwining if and only if $T_\vartheta$ commutes with convolution, that is
\[
\left(T_\vartheta f\right)*g= T_{\vartheta}(f*g)
\]
for all $f ,g \in K_2(G)$.
\end{remark}

\begin{example}\label{ex7}
Let $f,g \in K_2(\R)$ be such that $f,g \geqslant 0$, $g \in \Cc^\infty(\R)$ with $\int_{\R} g(t)\,\dd t=1$ and $f$ is differentiable at all $x \neq 0$ but not at $x=0$. Then, $f*g \in \Cc^\infty(\R)$.

We claim that $A:=\{T_t f^\dagger : t \in \R\} \cup \{ T_t (f*g)^\dagger :t \in \R \}$ is linearly independent over $\C$. Indeed, assume by contradiction that $A$ is not linearly independent. Then, there exists a non-trivial linear combination of elements in $A$ which is zero. By Lemma~\ref{lin ind}, this linear combination cannot consist of only elements in $\{T_t f^\dagger : t \in \R\}$, nor only of elements in $\{ T_t (f*g)^\dagger :t \in \R \}$. Therefore, there exist some $m,n \geqslant 1$, $t_1<t_2< \ldots < t_n$ and $s_1< \ldots < s_m$ and non-zero $C_1,C_2, \ldots, C_n, D_1, D_2, \ldots , D_m \in \C$ such that
\[
\sum_{k=1}^n C_k T_{t_k}(f^\dagger) +\sum_{j=1}^m D_j T_{s_j}(f*g)^\dagger =0 \,.
\]
This gives
\[
\sum_{k=2}^n C_k T_{t_k}(f^\dagger) +\sum_{j=1}^m D_j T_{s_j}(f*g)^\dagger =-C_1 T_{t_1}(f^\dagger)  \,.
\]
But this is not possible, as the left hand side is a function which is differentiable at $t_1$, while the right hand side is not differentiable at $t_1$.

Let $V := \mbox{Span}(A)$. We can define a linear functional $\theta:V \to \C
$ by
\[
\theta(T_tf^\dagger)  =1 \qquad  \text{ and } \qquad
\theta(T_t (f*g)^{\dagger})=0
\]
for all $t$.
Then, $\theta$ can be extended (non-uniquely) to a semi-measure $\vartheta : K_2(G) \to \C$. By construction,  we have
\[
(\vartheta*f)(t) =1  \,, \qquad ((\vartheta*f)*g)(t) =1 \qquad \text{ and } \qquad
(\vartheta*(f*g))(t) =0
\]
for all $t \in \R$.

If one can show that $\theta$ can be extended to a semi-translation bounded semi-measure $\vartheta$, then this would provide an example of a semi-measure which is semi-translation bounded and not intertwining, thus answering Question~\ref{q4}.
\end{example}

\begin{remark}
With $\theta$ and $V$ as in Example~\ref{ex7}, a standard application of Zorn's lemma shows that there exists a maximal pair $(W, \theta_W)$ consisting of a subspace $V \subseteq W \subseteq K_2(\R)$ and  a linear mapping $\theta_W: W \to \C$ with the following properties
\begin{itemize}
\item{} $W$ is translation invariant.
\item{} For all $h \in V$, one has $\theta_W(h)=\theta(h)$.
\item{} For all $h \in W$ and $t \in \R$, one has $\theta_{W}(T_th)=\theta_{W}(h)$. 
\end{itemize}
If one can show that the maximality implies that $W=K_2(\R)$, then this would give an example of a semi-translation invariant semi-measure which by Example~\ref{ex7} would not be intertwining.
\end{remark}

The following result is a well known immediate consequence of the Fubini theorem.

\begin{lemma}
Let $\mu$ be a translation bounded measure. Then, $\mu$ is semi-translation bounded and intertwining as a semi-measure. \qed
\end{lemma}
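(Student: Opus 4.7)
The plan is to handle the two conclusions separately.

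For the semi-translation boundedness part, I would simply invoke Lemma~\ref{lemms smtb}(a), which has already established that every translation bounded measure is semi-translation bounded when viewed as a semi-measure (this only uses the inclusion $K_2(G) \subseteq \Cc(G)$ together with the standard fact that $\mu*f \in \Cu(G)$ whenever $\mu \in \cM^\infty(G)$ and $f \in \Cc(G)$).

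For intertwining, I would verify condition (ii) of Lemma~\ref{intert}, namely that $(\mu*f)*g = \mu*(f*g)$ for all $f,g \in K_2(G)$. Fix $t \in G$. Unfolding the definitions gives
\[
((\mu*f)*g)(t) = \int_G (\mu*f)(t-s)\, g(s)\, \dd s = \int_G \int_G f(t-s-u)\, g(s)\, \dd\mu(u)\, \dd s,
\]
and the goal is to swap the order of integration to recover $\int_G (f*g)(t-u)\, \dd\mu(u) = (\mu*(f*g))(t)$. To justify this swap via Fubini's theorem, I would estimate the double integral against $|\mu|$ and $|f|,|g|$: the inner $s$-integrand has support in $\supp(g)$, and after integration in $s$ the resulting function of $u$ equals $(|f|*|g|)(t-u)$, which is continuous, bounded by $\|f\|_\infty \|g\|_\infty |\supp(g)|$, and supported in the compact set $t - (\supp(f)+\supp(g))$. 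Translation boundedness of $\mu$ then yields $|\mu|(t - (\supp(f)+\supp(g))) < \infty$, so the iterated integral of the absolute values is finite and Fubini applies.

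There is no genuine obstacle here, since the result is essentially the associativity of convolution for a translation bounded measure against two compactly supported continuous functions; the only bookkeeping required is the compact-support/finite-measure estimate that legitimizes Fubini. In particular, no continuity of $\mu * f$ is needed beyond what was already granted by part one, and the conclusion of Lemma~\ref{intert}(i) then follows from (ii) by the commutativity of convolution of functions.
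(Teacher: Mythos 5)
Your proof is correct and follows exactly the route the paper intends: the paper states the lemma as ``a well known immediate consequence of the Fubini theorem'' and omits the details, and your argument simply supplies those details (semi-translation boundedness via Lemma~\ref{lemms smtb}(a), and the Fubini swap justified by the compact support of $(|f|*|g|)(t-\cdot)$ together with local finiteness of $|\mu|$). No gap; this is the same approach, just written out.
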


\section{Fourier transformable semi-measures}\label{FT semi}

Now that we have covered some basic properties of semi-measures, we can focus on the subset which will be of utmost importance for the rest of this paper, namely Fourier transformable semi-measures.
Let us first review the following definition of \cite{LSS}.

\begin{definition}\label{FT-semimeasures}
A semi-measure $\vartheta$ is called \emph{Fourier transformable} if there is a measure $\nu$ on $\widehat{G}$ such that
\[
|\reallywidecheck{f\, }|^2 \in L^1(\nu) \qquad \text{ and } \qquad \vartheta(f*\widetilde{f})=\nu(|\reallywidecheck{f\, }|^2 )
\]
for all $f\in \Cc(G)$. We call $\nu$ the \emph{Fourier transform} of $\vartheta$ and write $\nu=\widehat{\vartheta}$. We denote the class of Fourier transformable semi-measures by $\SM_{T}(G)$.
\end{definition}

Let us recall the following results of \cite{LSS}

\begin{proposition} \cite[Prop. C.4]{LSS} \label{prop:bijection_ftsm_sam}
The Fourier transform is a bijection between Fourier transformable semi-measures and weakly admissible measures on $\widehat{G}$. \qed
\end{proposition}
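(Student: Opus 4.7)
The plan is to prove the bijection by writing down an explicit inverse $\nu \mapsto \vartheta_{\nu}$, after first establishing a polarisation identity that gives a clean formula linking any Fourier transformable semi-measure $\vartheta$ to its Fourier transform.

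Step one is to extend the defining identity by polarisation. Since $(f_{1}+i^{k}f_{2})*\widetilde{(f_{1}+i^{k}f_{2})}$ lies in $K_{2}(G)$ for $k=0,1,2,3$, applying the definition of Fourier transformability to each and taking the standard alternating sum yields the bilinear identity
\[
\vartheta(f_{1}*\widetilde{f_{2}}) \,=\, \int_{\widehat{G}} \reallywidecheck{f_{1}}\,\overline{\reallywidecheck{f_{2}}}\,\dd\nu
\]
for all $f_{1},f_{2}\in \Cc(G)$. Using $\widetilde{\widetilde{g}}=g$ and the easy identity $\reallywidecheck{\widetilde{g}}=\overline{\reallywidecheck{g}}$, replacing $f_{2}$ by $\widetilde{f_{2}}$ gives $\vartheta(\phi*\psi)=\int \reallywidecheck{\phi*\psi}\,\dd\nu$ for all $\phi,\psi\in\Cc(G)$, and by linearity one obtains the clean formula
\[
\vartheta(h) \,=\, \int_{\widehat{G}} \reallywidecheck{h}\,\dd\nu \qquad \text{for all } h\in K_{2}(G).
\]

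For the forward direction, given any $h=\sum_{k} c_{k}\phi_{k}*\psi_{k}\in K_{2}(G)$, one has $\reallywidecheck{h}=\sum_{k} c_{k}\reallywidecheck{\phi_{k}}\reallywidecheck{\psi_{k}}$ and the elementary bound $|\reallywidecheck{\phi_{k}}\reallywidecheck{\psi_{k}}|\leq \tfrac12(|\reallywidecheck{\phi_{k}}|^{2}+|\reallywidecheck{\psi_{k}}|^{2})$. Both summands on the right lie in $L^{1}(\nu)$ by the definition of Fourier transformability, so $\reallywidecheck{h}\in L^{1}(\nu)$ for all $h\in K_{2}(G)$, which by Pontryagin duality (identifying $\widehat{\widehat{G}}=G$) is exactly the weak admissibility of $\nu$ on $\widehat{G}$. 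For surjectivity, given any weakly admissible $\nu$ on $\widehat{G}$, I define $\vartheta_{\nu}:K_{2}(G)\to\C$ by $\vartheta_{\nu}(h):=\int_{\widehat{G}}\reallywidecheck{h}\,\dd\nu$; this is well-defined by weak admissibility and clearly linear, hence a semi-measure. For each $f\in\Cc(G)$ one has $f*\widetilde{f}\in K_{2}(G)$ and $\reallywidecheck{f*\widetilde{f}}=|\reallywidecheck{f}|^{2}$, so $|\reallywidecheck{f}|^{2}\in L^{1}(\nu)$ and $\vartheta_{\nu}(f*\widetilde{f})=\int|\reallywidecheck{f}|^{2}\,\dd\nu$, proving $\vartheta_{\nu}$ is Fourier transformable with $\widehat{\vartheta_{\nu}}=\nu$. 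Injectivity of $\vartheta\mapsto\widehat{\vartheta}$ is then immediate from the key formula, which determines $\vartheta$ on all of $K_{2}(G)$ from $\nu$.

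I do not anticipate a serious obstacle here: the only real content is the polarisation identity, after which everything reduces to a direct unwinding of definitions. The one minor subtlety to watch is the correct use of $\reallywidecheck{\widetilde{g}}=\overline{\reallywidecheck{g}}$ when passing from the sesquilinear form on $\Cc(G)\times\Cc(G)$ to the evaluation on $K_{2}(G)$.
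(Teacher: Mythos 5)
Your proof is correct and follows essentially the same route as the paper's source for this result: polarisation yields the key formula $\vartheta(h)=\int_{\widehat{G}}\reallywidecheck{h}\,\dd\nu$ for all $h\in K_2(G)$ (this is exactly the cited Lemma~\ref{lem:properties_sm_1}(a)), the bound $|\reallywidecheck{\phi}\,\reallywidecheck{\psi}|\leqslant\tfrac12\bigl(|\reallywidecheck{\phi}|^2+|\reallywidecheck{\psi}|^2\bigr)$ gives weak admissibility, and the explicit inverse $\nu\mapsto\vartheta_\nu$ is precisely the one recorded in the Fact following the proposition. The only point you leave implicit is the uniqueness of $\widehat{\vartheta}$, without which ``the Fourier transform'' is not a map; this does follow from your key formula together with the standard localisation argument (multiply by some $\widehat{g}\geqslant 1_K$ to reduce to finite measures, as in the proof of Lemma~\ref{lem semi FT}(a)), but it deserves a sentence.
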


\begin{lemma} \cite[Lem. C.1]{LSS}  \label{lem:properties_sm_1}
Let $\vartheta$ be a Fourier transformable semi-measure.
\begin{enumerate}
\item[(a)] For all $f\in K_2(G)$, we have
\[
\reallywidecheck{f}\in L^1(|\widehat{\vartheta}|) \qquad \text{ and } \qquad \vartheta(f) = \widehat{\vartheta}(\reallywidecheck{f}) \,.
\]
\item[(b)] For all $f\in K_2(G)$, we have
\[
(\vartheta*f)(t) = \int_{\widehat{G}} \chi(t)\, \widehat{f}(\chi)\, \dd\widehat{\vartheta}(\chi)   = \reallywidecheck{\widehat{f}\widehat{\vartheta}}(t) \,.
\]\qed
\end{enumerate}
\end{lemma}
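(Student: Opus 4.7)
The plan is to prove (a) first from the definition via a polarization argument, and then to derive (b) as a straightforward corollary.

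For part (a), my starting point is the defining identity $\vartheta(f*\widetilde{f})=\widehat{\vartheta}(|\reallywidecheck{f}|^{2})$ for $f\in\Cc(G)$. The first step is to polarize: replacing $f$ by $f+\alpha g$ for $\alpha\in\{1,\im\}$ with $f,g\in\Cc(G)$ and subtracting the ``diagonal'' contributions, one extracts the identity
\[
\vartheta(f*\widetilde{g})=\widehat{\vartheta}\bigl(\reallywidecheck{f}\cdot\overline{\reallywidecheck{g}}\bigr).
\]
For this manipulation to make sense, the product $\reallywidecheck{f}\cdot\overline{\reallywidecheck{g}}$ must lie in $L^{1}(|\widehat{\vartheta}|)$, which I would verify via Cauchy--Schwarz in $L^{2}(|\widehat{\vartheta}|)$, using that $|\reallywidecheck{f}|^{2},|\reallywidecheck{g}|^{2}\in L^{1}(|\widehat{\vartheta}|)$ by assumption.

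Next, I will record the two elementary identities $\reallywidecheck{\widetilde{g}}=\overline{\reallywidecheck{g}}$ and $\reallywidecheck{f*g}=\reallywidecheck{f}\cdot\reallywidecheck{g}$. Substituting $\widetilde{g}$ for $g$ in the polarized identity (and using $\widetilde{\widetilde{g}}=g$) then yields $\vartheta(f*g)=\widehat{\vartheta}(\reallywidecheck{f*g})$ for all $f,g\in\Cc(G)$. Since $K_{2}(G)$ is by definition the $\C$-linear span of such convolutions, the identity extends by linearity to $\vartheta(h)=\widehat{\vartheta}(\reallywidecheck{h})$ for every $h\in K_{2}(G)$; the integrability $\reallywidecheck{h}\in L^{1}(|\widehat{\vartheta}|)$ is inherited from the integrability of each summand $\reallywidecheck{f_{i}}\cdot\reallywidecheck{g_{i}}$.

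For part (b), the key observation is that $K_{2}(G)$ is translation-invariant and closed under the $^{\dagger}$ operation (since $T_{t}(f*g)=(T_{t}f)*g$ and $(f*g)^{\dagger}=f^{\dagger}*g^{\dagger}$), so $T_{t}f^{\dagger}\in K_{2}(G)$ for every $f\in K_{2}(G)$ and $t\in G$. A change of variables $y=t-x$ combined with the character identity $\chi(t-y)=\chi(t)\overline{\chi(y)}$ gives
\[
\reallywidecheck{T_{t}f^{\dagger}}(\chi)=\int_{G}\chi(x)\,f(t-x)\,\dd x=\chi(t)\,\widehat{f}(\chi).
\]
Applying part (a) to $T_{t}f^{\dagger}$ then yields
\[
(\vartheta*f)(t)=\vartheta(T_{t}f^{\dagger})=\widehat{\vartheta}\bigl(\reallywidecheck{T_{t}f^{\dagger}}\bigr)=\int_{\widehat{G}}\chi(t)\,\widehat{f}(\chi)\,\dd\widehat{\vartheta}(\chi)=\reallywidecheck{\widehat{f}\widehat{\vartheta}}(t).
\]
The only real subtlety is in the polarization step of part (a), where one must keep track of integrability against the (potentially complex) measure $\widehat{\vartheta}$; once this is disposed of via Cauchy--Schwarz, everything else reduces to straightforward bookkeeping with the Fourier transform identities.
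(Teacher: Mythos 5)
Your proposal is correct: the paper itself does not reprove this lemma (it is quoted from \cite{LSS}), and your argument --- polarizing the defining identity $\vartheta(f*\widetilde{f})=\widehat{\vartheta}(|\reallywidecheck{f}|^2)$ to get $\vartheta(f*\widetilde{g})=\widehat{\vartheta}(\reallywidecheck{f}\,\overline{\reallywidecheck{g}})$ with integrability of the cross terms supplied by $|\reallywidecheck{f}\,\overline{\reallywidecheck{g}}|\leqslant\tfrac12(|\reallywidecheck{f}|^2+|\reallywidecheck{g}|^2)$, extending by linearity over $K_2(G)=\operatorname{Span}\{f*g\}$, and then evaluating at $T_tf^\dagger\in K_2(G)$ using $\reallywidecheck{T_tf^\dagger}(\chi)=\chi(t)\widehat{f}(\chi)$ --- is exactly the standard route to this statement. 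All the steps check out, including the translation- and $\dagger$-invariance of $K_2(G)$ needed for part (b).
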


As an immediate consequence of Lemma~\ref{lem:properties_sm_1}, we get the following result, which is the reason why in the previous section we focused on these concepts.

\begin{corollary}\label{cor FT implies tb and inter}
Let $\vartheta$ be a Fourier transformable semi-measure. Then, $\vartheta \in \SM^\infty(G)$ and $\vartheta$ is intertwining.
\end{corollary}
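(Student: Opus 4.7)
The plan is to read both conclusions directly off the identity
\[
(\vartheta * f)(t) = \reallywidecheck{\widehat{f}\,\widehat{\vartheta}}(t)
\]
supplied by Lemma~\ref{lem:properties_sm_1}(b), combined with the fact that $\widehat{f}\,\widehat{\vartheta}$ is a finite complex measure on $\widehat{G}$ for every $f \in K_2(G)$. That finiteness is implicit in Lemma~\ref{lem:properties_sm_1}(b); if one wants to verify it from scratch, note that $\widehat{\vartheta}$ is weakly admissible by Proposition~\ref{prop:bijection_ftsm_sam}, so $\widehat{\vartheta}^{\dagger}$ is weakly admissible by Lemma~\ref{lem:stronglyadmissible_prop}(e), and then $\widehat{f} = (\reallywidecheck{f})^{\dagger} \in L^1(\widehat{\vartheta})$ follows from $\reallywidecheck{f} \in L^1(\widehat{\vartheta}^{\dagger})$.

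For semi-translation boundedness, I will invoke the standard fact that the inverse Fourier transform of any finite complex measure $\nu$ on $\widehat{G}$ lies in $\Cu(G)$. This is immediate from the estimate
\[
|\reallywidecheck{\nu}(t+s) - \reallywidecheck{\nu}(t)| \leqslant \int_{\widehat{G}} |\chi(s) - 1|\, \dd |\nu|(\chi)
\]
combined with dominated convergence, using that characters tend to $1$ uniformly on compacta as $s \to 0$. Applied with $\nu = \widehat{f}\,\widehat{\vartheta}$, this yields $\vartheta * f \in \Cu(G)$ for every $f \in K_2(G)$, and hence $\vartheta \in \SM^{\infty}(G)$.

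For the intertwining property, I will verify condition (ii) of Lemma~\ref{intert}. Applying Lemma~\ref{lem:properties_sm_1}(b) directly to $f * g$ gives $\vartheta * (f * g) = \reallywidecheck{\widehat{f}\widehat{g}\widehat{\vartheta}}$. For the other side, $\vartheta * f = \reallywidecheck{\widehat{f}\,\widehat{\vartheta}}$ is bounded, so its classical convolution with $g \in K_2(G) \subseteq \Cc(G)$ is well-defined, and a Fubini exchange (absolutely convergent since $\widehat{f}\,\widehat{\vartheta}$ is finite and $g$ has compact support) rewrites it as
\[
((\vartheta * f) * g)(t) \;=\; \int_{\widehat{G}} \chi(t)\, \widehat{g}(\chi)\, \dd (\widehat{f}\,\widehat{\vartheta})(\chi) \;=\; \reallywidecheck{\widehat{g}\widehat{f}\widehat{\vartheta}}(t).
\]
Matching the two expressions establishes intertwining.

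I do not expect any substantive obstacle. Both halves reduce, via Lemma~\ref{lem:properties_sm_1}(b), to elementary properties of the inverse Fourier transform of a finite measure. The only step requiring any bookkeeping is the Fubini exchange in the intertwining argument, and even that is routine because the underlying measure on $\widehat{G}$ is finite and $g$ is compactly supported.
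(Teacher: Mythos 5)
Your argument is correct, and the first half (semi-translation boundedness) is essentially the paper's: both reduce to the identity $\vartheta*f=\reallywidecheck{\widehat{f}\,\widehat{\vartheta}}$ from Lemma~\ref{lem:properties_sm_1}(b) together with the fact that the inverse Fourier transform of a finite measure lies in $\Cu(G)$ (the paper cites \cite[Lem.~4.8.3]{MoSt} where you prove the estimate by hand; your verification that $\widehat{f}\,\widehat{\vartheta}$ is finite via weak admissibility is fine, though it is quicker to note $\widehat{f}=\reallywidecheck{f^{\dagger}}$ with $f^{\dagger}\in K_2(G)$ and apply Lemma~\ref{lem:properties_sm_1}(a) directly). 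For the intertwining half your route genuinely differs: the paper passes to the measure $\nu=(\vartheta*f)\theta^{}_{G}$, applies the convolution theorem \cite[Lem.~4.9.26]{MoSt} to identify $\reallywidehat{(g*\nu)\theta_G}=\widehat{f*g}\,\widehat{\vartheta}$, and then upgrades an almost-everywhere equality of densities to everywhere equality using continuity, whereas you compute $(\vartheta*f)*g$ directly by a Fubini exchange against the finite measure $\widehat{f}\,\widehat{\vartheta}$ and match it pointwise with $\vartheta*(f*g)=\reallywidecheck{\widehat{f}\widehat{g}\,\widehat{\vartheta}}$. Your version is more self-contained and avoids both the external convolution lemma and the a.e.-to-everywhere step; the paper's version leans on machinery it reuses elsewhere. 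One cosmetic remark: Lemma~\ref{intert}(ii) is stated for $f,g\in\Cc(G)$ while your computation (like the paper's own) runs over $f,g\in K_2(G)$, which is the range on which $\vartheta*f$ is actually defined, so you are consistent with the paper's usage and no gap results.
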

\begin{proof}
First note that $\vartheta*f = \reallywidecheck{\widehat{f}\widehat{\vartheta}}$ for all $f \in K_2(G)$ by Lemma~\ref{lem:properties_sm_1}. Now, let $f \in K_2(G)$ and let $\mu :=\widehat{\vartheta}$. Then, Lemma~\ref{lem:properties_sm_1} implies  $\reallywidecheck{f} \in L^1(|\mu|)$ and hence $\reallywidecheck{f} \mu$ is a finite measure. Thus, by \cite[Lem. 4.8.3]{MoSt},
$\vartheta*f = \reallywidecheck{\widehat{f}\mu} \in \Cu(G)$. This proves that $\vartheta \in \SM^\infty(G)$.

Next, let $f,g \in K_2(G)$ be fixed but arbitrary. As above, $\reallywidecheck{f} \mu$ is a finite measure. For simplicity, let $h:= \vartheta*f = \reallywidecheck{\widehat{f}\mu}$, and let $\nu:= h \theta_G$. Then, $\nu$ is a Fourier transformable measure and
\[
\widehat{\nu}= \widehat{f}\mu \,.
\]
Therefore, as $g \in K_2(G)$ we have \cite[Lem. 4.9.26]{MoSt}
\[
\reallywidehat{(g*\nu)\theta_G}=\widehat{f}\widehat{g}\mu=\widehat{f*g}\mu \,.
\]

Now, since $f*g \in K_2(G)$, by Lemma~\ref{lem:properties_sm_1}(b), we have
\[
(\vartheta*(f*g)) \theta_G = \reallywidecheck{\widehat{f*g}\mu}=\reallywidecheck{\reallywidehat{g*\nu}} =(g*\nu) \theta_G= (h*g) \theta_G \,.
\]
This shows that the density functions $\vartheta*(f*g)$ and $(\vartheta*f)*g$ agree $\theta_G$-almost everywhere. As $\vartheta$ is translation bounded, we have $\vartheta*(f*g)$, $(\vartheta*f)*g \in \Cu(G)$ and hence these two functions agree everywhere.

This proves that $\vartheta$ is intertwining.
\end{proof}

Next, we show that for measures, Fourier transformability as a measure and semi-measure coincide.

\begin{lemma}\label{lem FT measure and semi measure}
Let $\mu$ be a measure on $G$. Then, $\mu$ is Fourier transformable as a measure if and only if $\mu$ is Fourier transformable as a semi-measure. Moreover, in this case the two Fourier transforms coincide.
\end{lemma}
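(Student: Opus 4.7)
The plan is to observe that this lemma is essentially definitional: both notions of Fourier transformability impose the same identity, involving $\mu$ evaluated on elements of the form $f \ast \widetilde f$ for $f \in \Cc(G)$, and elements of this form already lie in $K_2(G)$. So the semi-measure $\mu\big|_{K_2(G)}$ has access to exactly the inputs that the measure definition tests.

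First, I would check the equivalence of the integrability conditions. For a complex Radon measure $\nu$ on $\widehat G$, the space $L^2(\nu)$ means $L^2(|\nu|)$, so $\reallywidecheck{f} \in L^2(\widehat{\mu})$ is by definition the statement $\int |\reallywidecheck{f}|^2\, \dd|\widehat{\mu}| < \infty$, which is exactly $|\reallywidecheck{f}|^2 \in L^1(\widehat{\mu})$. Thus the two integrability hypotheses in Definition (of measure Fourier transformability) and Definition~\ref{FT-semimeasures} coincide.

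For the forward direction, suppose $\mu \in \cM_T(G)$ with Fourier transform $\widehat{\mu} \in \cM(\widehat G)$. By Example~\ref{ex4}(1), the restriction $\mu\big|_{K_2(G)}$ is a semi-measure; and since $f\ast\widetilde f \in K_2(G) \subseteq \Cc(G)$ for all $f \in \Cc(G)$, the defining identity $\langle \mu, f\ast\widetilde f\rangle = \langle \widehat\mu, |\reallywidecheck f\,|^2\rangle$ is literally the identity required in Definition~\ref{FT-semimeasures} with $\nu = \widehat\mu$. Hence $\mu$ is Fourier transformable as a semi-measure, with the same transform. Conversely, if $\mu$ is Fourier transformable as a semi-measure with transform $\nu$, then for every $f \in \Cc(G)$ we have $|\reallywidecheck f\,|^2 \in L^1(\nu)$ and $\mu(f\ast\widetilde f) = \nu(|\reallywidecheck f\,|^2)$ — but this is exactly the definition of $\mu$ being Fourier transformable as a measure with $\widehat\mu = \nu$. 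Coincidence of the two transforms then follows since in both cases the transform is uniquely determined by the identity on the family $\{f\ast\widetilde f : f \in \Cc(G)\}$ (or, equivalently, via Proposition~\ref{prop:bijection_ftsm_sam}, as $\widehat\mu$ is the unique weakly admissible measure that corresponds to $\mu$).

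There is no real obstacle here — the only thing to watch is the notational translation between $\reallywidecheck f \in L^2(\widehat\mu)$ and $|\reallywidecheck f\,|^2 \in L^1(\widehat\mu)$, and the observation that $K_2(G)$ contains all the test inputs used by the measure definition.
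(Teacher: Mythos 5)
Your proposal is correct and matches the paper, which disposes of this lemma with the single line ``This follows immediately from the definitions''; you have simply written out why, namely that both definitions test the same identity on $f*\widetilde{f}\in K_2(G)$ for $f\in\Cc(G)$ and that $\reallywidecheck{f}\in L^2(\widehat{\mu})$ is the same condition as $|\reallywidecheck{f}\,|^2\in L^1(\widehat{\mu})$. No further comment is needed.
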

\begin{proof}
This follows immediately from the definitions.
\end{proof}

As an immediate consequence of Corollary~\ref{cor FT implies tb and inter} and Lemma~\ref{lem FT measure and semi measure} we get

\begin{corollary}\label{ft imply stb} Let $\mu$ be a measure which is Fourier transformable. Then, $\mu$ is semi-translation bounded.
\end{corollary}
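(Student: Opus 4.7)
The plan is to chain together the two preceding results and deduce the corollary with essentially no extra work. Specifically, the hypothesis gives $\mu \in \mathcal{M}_T(G)$, i.e.\ $\mu$ is Fourier transformable as a measure in the sense of the definition of $\mathcal{M}_T(G)$. First I would invoke Lemma~\ref{lem FT measure and semi measure} to transport this Fourier transformability from the measure setting to the semi-measure setting: viewing $\mu$ as a functional on $K_2(G)$ by restriction from $C_c(G)$, we obtain $\mu \in \SM_T(G)$, with the same Fourier transform $\widehat{\mu}$.

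Next I would apply Corollary~\ref{cor FT implies tb and inter} to the semi-measure $\mu$, which yields $\mu \in \SM^\infty(G)$, meaning $\mu * f \in \Cu(G)$ for every $f \in K_2(G)$. This is precisely the definition of $\mu$ being semi-translation bounded as a semi-measure, which is the conclusion we need.

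There is no real obstacle here: the substance is already packaged in the two results being cited. The only thing to double-check in the write-up is that ``semi-translation bounded'' for a measure is interpreted in the sense of the semi-measure convolution \eqref{semi conv}, which for measures coincides with the usual convolution $\mu * f$ on $K_2(G) \subseteq \Cc(G)$; this compatibility was already noted immediately after \eqref{semi conv}. Thus the proof reduces to the single sentence ``This is immediate from Lemma~\ref{lem FT measure and semi measure} and Corollary~\ref{cor FT implies tb and inter}.''
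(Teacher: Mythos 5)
Your argument is exactly the paper's: the corollary is stated there as an immediate consequence of Lemma~\ref{lem FT measure and semi measure} (transferring Fourier transformability from the measure to the semi-measure setting) followed by Corollary~\ref{cor FT implies tb and inter}. The proposal is correct and adds the right sanity check about the compatibility of the two notions of convolution.
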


\begin{remark} As already mentioned before, there exists a measure $\mu$ on $\R$ that is Fourier transformable but not translation bounded \cite{ARMA1}.
By Cor.~\ref{ft imply stb} this measure is semi-translation bounded as semi-measure.

\end{remark}

\medskip

Next, let us give a standard classification of Fourier transformability in terms of Fourier--Stieltjes algebra, as well as for Fourier transformable semi-measures with absolutely continuous Fourier transform. Recall here that the Fourier--Stieltjes Algebra $B(G)$ and the Fourier Algebra $A(G)$ of G are defined as
\[
B(G) = \{ \widehat{\mu}\, :\, \mu \in \cM(\widehat{G}) \mbox{ is finite} \}  \qquad \text{ and } \qquad
A(G) = \{ \widehat{f}\, :\, f \in L^1(\widehat{G}) \} \,.
\]

\smallskip
Before moving to the next result let us emphasize that since $A(G) \subseteq B(G) \subseteq \Cu(G)$, if $\vartheta$ is a semi-measure with the property that for all $f \in K_2(G)$ we have $\vartheta*f \in A(G)$, or $\vartheta*f \in B(G)$, respectively, then $\vartheta$ is semi-translation bounded.  This means that all semi-measures in the next Lemma are automatically semi-translation bounded, and hence we can talk about them being intertwining.
\begin{lemma}\label{lem semi FT}
Let $\vartheta$ be a semi-measure. Then,
\begin{itemize}
\item[(a)] $\vartheta$ is Fourier transformable if and only if $\vartheta$ is intertwining and $\vartheta*f \in B(G)$ for all $f \in K_2(G)$. Moreover, in this case, for all
$f \in K_2(G)$ the measure $\widehat{f} \widehat{\mu}$ is finite and
\[
\vartheta*f= \reallywidecheck{\widehat{f}} \,.
\]
\item[(b)] $\vartheta$ is Fourier transformable and $\widehat{\vartheta}$ is an absolutely continuous measure if and only if $\vartheta$ is intertwining and $\vartheta*f \in A(G)$ for all $f \in K_2(G)$.
\end{itemize}
\end{lemma}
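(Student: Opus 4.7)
The plan is to model the argument on the classical Argabright--Gil de Lamadrid characterization of Fourier transformable measures via the Fourier--Stieltjes algebra, with the intertwining hypothesis substituting for the automatic properties (continuity, associativity of convolution with $\Cc$ functions) that measures enjoy.

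For the forward implication in (a), Corollary~\ref{cor FT implies tb and inter} already provides that $\vartheta$ is intertwining. Applying Lemma~\ref{lem:properties_sm_1}(a) to $f^\dagger \in K_2(G)$ shows $\widehat{f} = \reallywidecheck{f^\dagger} \in L^1(|\widehat{\vartheta}|)$, so the measure $\widehat{f}\widehat{\vartheta}$ is finite; Lemma~\ref{lem:properties_sm_1}(b) then identifies $\vartheta*f = \reallywidecheck{\widehat{f}\widehat{\vartheta}}$, which lies in $B(G)$ as the inverse Fourier transform of a finite measure.

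For the reverse implication in (a), the goal is to construct $\widehat{\vartheta}$. For each $f \in K_2(G)$, the hypothesis $\vartheta*f \in B(G)$ yields a unique finite measure $\mu_f$ on $\widehat{G}$ with $\vartheta*f = \reallywidecheck{\mu_f}$ (uniqueness by injectivity of the inverse Fourier transform on finite measures). Combining the identity $(\reallywidecheck{\mu_f})*g = \reallywidecheck{\widehat{g}\mu_f}$ with the intertwining condition $(\vartheta*f)*g = \vartheta*(f*g)$ yields the compatibility relation $\widehat{g}\mu_f = \mu_{f*g} = \widehat{f}\mu_g$. I would then glue the local measures $\widehat{f}^{-1}\mu_f$, defined off the zero set of $\widehat{f}$, into a single regular Borel measure $\widehat{\vartheta}$ on $\widehat{G}$ satisfying $\widehat{f}\widehat{\vartheta} = \mu_f$ for every $f \in K_2(G)$. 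To cover $\widehat{G}$ by such charts one uses positive combinations $f = \sum_i h_i*\widetilde{h_i} \in K_2(G)$ with $h_i \in \Cc(G)$ chosen so that $\widehat{f} = \sum_i |\widehat{h_i}|^2$ has no zero on a prescribed compact set. The Fourier transformability identity is then obtained by evaluating $\vartheta*f = \reallywidecheck{\widehat{f}\widehat{\vartheta}}$ at $t=0$, which rearranges to $\vartheta(h) = \int_{\widehat{G}} \reallywidecheck{h}\, d\widehat{\vartheta}$ for all $h \in K_2(G)$; specializing to $h = g*\widetilde{g}$ with $g \in \Cc(G)$ and using $\reallywidecheck{g*\widetilde{g}} = |\reallywidecheck{g}|^2$ produces the defining equation.

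Part (b) then follows from (a) together with the description $A(G) = \reallywidehat{L^1(\widehat{G})}$: a finite measure $\mu$ on $\widehat{G}$ has $\reallywidecheck{\mu} \in A(G)$ precisely when $\mu$ is absolutely continuous with $L^1$ density. If $\widehat{\vartheta}$ is absolutely continuous, each finite measure $\widehat{f}\widehat{\vartheta}$ is too, so $\vartheta*f \in A(G)$ for all $f \in K_2(G)$. Conversely, if each $\vartheta*f \in A(G)$, then every $\mu_f = \widehat{f}\widehat{\vartheta}$ is absolutely continuous; using covering $f$'s as above, $\widehat{\vartheta}$ is locally, hence globally, absolutely continuous. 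The main obstacle will be executing the gluing step in the reverse direction of (a): assembling a global measure $\widehat{\vartheta}$ from the compatible local pieces and producing the covering functions $f \in K_2(G)$ whose Fourier transforms do not vanish on prescribed compact sets. The compatibility is delivered by the intertwining hypothesis, and the covering is arranged through positive combinations of the form $h*\widetilde{h}$.
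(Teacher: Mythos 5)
Your proposal is correct and follows essentially the same route as the paper: the forward directions come from Corollary~\ref{cor FT implies tb and inter} and Lemma~\ref{lem:properties_sm_1}, and the reverse direction of (a) derives the compatibility relation $\widehat{g}\,\mu_f=\widehat{f}\,\mu_g$ from intertwining and then assembles $\widehat{\vartheta}$ by local division by $\widehat{g}$ on sets where $\widehat{g}$ does not vanish, exactly as in the paper's adaptation of the Berg--Forst argument (their Thm.~4.5), with the covering functions supplied by $K_2$-elements whose transforms are positive on a prescribed compact set. Part (b) is likewise handled identically, via the finiteness of $\widehat{f}\,\widehat{\vartheta}$ and local absolute continuity.
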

\begin{proof}
(a) $\Longrightarrow$ This follows from Lemma~\ref{lem:properties_sm_1}(b) and Corollary~\ref{cor FT implies tb and inter}.

\medskip

\noindent $\Longleftarrow$ For each $f \in K_2(G)$, there exists a finite measure $\nu_f$ on $\widehat{G}$ such that
\[
\vartheta*f= \reallywidecheck{\nu_f} \,.
\]
Then, $\vartheta*f$ is Fourier transformable as measure and \cite[Lem.~4.9.15]{MoSt}
\[
\reallywidehat{ (\vartheta*f)\theta_G} = \nu_f \,.
\]
When we now apply \cite[Lem.~4.9.24]{MoSt} twice, we obtain
\[
\reallywidehat{\widehat{g}\reallywidecheck{\nu_f}}=(\vartheta*f)*g=(\vartheta*g)*f=\reallywidehat{\widehat{f}\reallywidecheck{\nu_g}}
\]
where the intertwining of $\vartheta$ is essential for the middle step.

Therefore, the finite measures $\widehat{g}\reallywidecheck{\nu_f}$ and $\widehat{f}\reallywidecheck{\nu_g}$ coincide. This means
\begin{equation}\label{eq3}
\widehat{g}\reallywidecheck{\nu_f}= \widehat{f}\reallywidecheck{\nu_g} \qquad \text{ for all } f,g \in K_2(G) \,.
\end{equation}

To complete the proof, we now follow the proof of \cite[Thm.~4.5]{BF}. For each $\varphi \in \Cc(G)$ and $g \in K_2(G)$ such that $\varphi(x) \neq 0$ implies $\widehat{g}(x)\neq 0$, we can define
\[
\frac{\varphi}{\hat{g}} (x) =
\begin{cases}
\frac{\varphi(x)}{\widehat{g}(x)} \,, &  \widehat{g}(x) \neq 0 \,,\\
0 \,, &  \mbox{otherwise} \,.
\end{cases}
\]
Note that such a $g$ always exist \cite[Prop.~2.4]{BF} or \cite[Cor.~4.9.12]{MoSt}.
\smallskip
Now, if $g_1,g_2 \in K_2(G)$ are two functions such that $\varphi(x) \neq 0$ implies $\widehat{g_{j}}(x)\neq 0$ for $1 \leqslant j \leqslant 2$, we have
\[
\nu_{g_1}\left(\frac{\varphi}{\widehat{g_1}}\right)
    =\nu_{g_1}\left(\widehat{g_2}\frac{\varphi}{\widehat{g_1}\widehat{g_2}}\right)
     =(\widehat{g_2}\nu_{g_1})\left(\frac{\varphi}{\widehat{g_1}\widehat{g_2}}
       \right)
    \stackrel{\eqref{eq3}}{=}(\widehat{g_1}\nu_{g_2})
      \left(\frac{\varphi}{\widehat{g_1}\widehat{g_2}}\right)
     =\nu_{g_2}\left(\frac{\varphi}{\widehat{g_2}}\right) \,.
\]
This allows us to define a function $\nu :\Cc(\widehat{G}) \to \C$ by
\begin{equation}\label{eq5}
\nu(\varphi):=\nu_{g}\left(\frac{\varphi}{\widehat{g}}\right) \,,
\end{equation}
where $g \in K_2(G)$ is any function such that $\varphi(x) \neq 0$ implies $\widehat{g}(x)\neq 0$. By the above, $\nu$ does not depend on the choice of $g$.

It is easy to see that $\nu$ is linear. Next, let $K \subseteq \widehat{G}$ be any fixed compact set. Pick some $g \in K_2(G)$ such that $\widehat{g} \geqslant 1_K$. Such a $g$ exists by \cite[Prop.~2.4]{BF} or \cite[Cor.~4.9.12]{MoSt}. Set $C_K:= \left| \nu_g \right|(\widehat{G})$, which is finite since $\nu_g$ is a finite measure.

Next, for all $\varphi \in \Cc(\widehat{G})$ with $\supp(\varphi) \subseteq K$, we have
\[
| \nu (g) | = \left| \nu_{g}\left(\frac{\varphi}{\widehat{g}}\right) \right| \leqslant \Big\|\frac{\varphi}{\widehat{g}} \Big\|_\infty | \nu_g |(G) \leqslant C_K \| \varphi \|_\infty  \,.
\]
This shows that $\nu$ is a measure. Moreover, for all $g \in K_2(G)$, and all $\varphi \in \Cc(G)$, we have $\widehat{g}\varphi(x) \neq 0$ implies $\widehat{g}(x)\neq 0$. Therefore, we obtain
\[
(\widehat{g} \nu) (\varphi)= \nu(\widehat{g} \varphi)\stackrel{\eqref{eq5}}{=}\nu_g\left(\frac{\widehat{g} \varphi}{\widehat{g}}\right)=\nu_g(\varphi) \,
\]
and hence
\begin{equation}\label{eq6}
\widehat{g} \nu = \nu_g  \qquad \text{ for all } g \in K_2(G) \,.
\end{equation}
In particular, $\reallywidecheck{g} \nu$ is a finite measure and hence $\reallywidecheck{g} \in L^1(\nu)$ for all $g \in K_2(G)$.

Next, for all $g \in K_2(G)$, we have
\[
\vartheta(g)
    = (\vartheta*g^\dagger)(0)
     = \reallywidecheck{\nu_{g^\dagger}}(0)
     = \int_{\widehat{G}} \dd \nu_{g^\dagger} (t)
     \stackrel{\eqref{eq6}}{=}\int_{\widehat{G}} \dd (\widehat{g^\dagger}\nu)(t)
    =\int_{\widehat{G}} \reallywidecheck{g}(t)\, \dd \nu (t)
    = \nu( \reallywidecheck{g}) \,.
\]
This proves that $\vartheta$ is Fourier transformable and $\widehat{\vartheta}=\nu$.

The last claim of (a) follows from \eqref{eq6}.

\medskip

(b) $\Longrightarrow$ Let $f \in L_{\text{loc}}^1(\widehat{G})$ be such that $\widehat{\vartheta} = f \theta_{\widehat{G}}$, and let $g \in K_2(G)$. Since $\reallywidecheck{g} \in L^1(\widehat{\vartheta})$, we get that $\reallywidecheck{g} \widehat{\vartheta} \in L^1(\widehat{G})$. The claim follows now from Lemma~\ref{lem:properties_sm_1}(b).

\medskip

\noindent $\Longleftarrow$ For all $f \in K_2(G)$, we have $\vartheta*f \in A(G) \subseteq B(G)$. Therefore, by (a), $\vartheta$ is a Fourier transformable semi-measure. Its Fourier transform $\nu =\widehat{\vartheta}$ is then a weakly admissible measure on $\widehat{G}$.

Next, let $f \in K_2(G)$ be arbitrary. Since $\nu$ is weakly admissible, $\widehat{f} \nu$ is a finite measure on $\widehat{G}$, and by Lemma~\ref{lem:properties_sm_1}, we have
\[
\vartheta*f=\reallywidecheck{\widehat{f} \nu} \,.
\]
Since $\vartheta*f \in A(G)$, there exists some $h \in L^1(\widehat{G})$ such that
\[
\vartheta*f=\reallywidecheck{h} \,.
\]
Now, \cite[Lem.~4.9.15]{MoSt} and \cite[Thm.~4.9.13]{MoSt} imply
\[
\widehat{f} \nu = h \theta_{\widehat{G}} \,.
\]
Thus, the measure $\widehat{f} \nu$ is absolutely continuous, for all $f \in K_2(G)$.

Finally, \cite[Prop.~2.4]{BF} or \cite[Cor.~4.9.12]{MoSt} gives that the restriction of $\nu$ to each compact set $K \subseteq \widehat{G}$ is an absolutely continuous measure, and therefore, $\nu$ is absolutely continuous,
which completes the proof.
\end{proof}

Let us emphasize here that, given a semi-measure $\vartheta$, it is in general not easy to check if $\vartheta*f \in A(G)$ and/or $\vartheta*f \in B(G)$ for all $f \in K_2(G)$.
Nevertheless, the result above will be helpful for theoretical applications, allowing us to show that certain classes of semi-measures are Fourier transformable (see Theorem~\ref{bochner thm} below), as well as establishing that Fourier transformable semi-measures have certain properties (see Corollary~\ref{cor 2}).

\medskip

Recall from Proposition~\ref{prop:bijection_ftsm_sam}, that each weakly admissible measure $\nu\in\cM(\widehat{G})$ is the Fourier transform of a semi-measure $\theta_{\nu}$. Moreover, the definition of the Fourier transformability tells us that
\begin{equation*}
\theta_{\nu}(f):=  \nu\big(\reallywidecheck{f\, }\big)\,,
\end{equation*}
for all $f \in K_2(G)$, which gives the explicit description of $\theta_{\nu}$. Let us summarize these observations.

\begin{fact}
Let $\SM_{\cF}(G)$ be the class of Fourier transformable semi-measures. Then, the mapping $\widehat{ \cdot } : \SM_{\cF}(G) \to  \cM^{w}(\widehat{G})$ is a bijection with the inverse $\reallywidecheck{ \cdot } : \cM^{w}(\widehat{G}) \to \SM_{\cF}(G)$ given by
\[
\reallywidecheck{\mu}(f) = \theta_{\mu}(f):= \int_{\widehat{G}} \reallywidecheck{f}( \chi)\ \dd \mu( \chi )  \qquad \text{ for all } f \in K_2(G) \,.
\]
\end{fact}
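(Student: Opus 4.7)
The plan is to observe that this statement is essentially a repackaging of two facts already established in the paper, so the proof will be a short verification rather than new work. The bijectivity of $\widehat{\cdot}\colon \SM_{\cF}(G)\to\cM^{w}(\widehat{G})$ is exactly Proposition~\ref{prop:bijection_ftsm_sam}, so nothing further needs to be said about that. What remains is to justify the explicit inversion formula $\reallywidecheck{\mu}=\theta_{\mu}$, and to confirm that the defining integral makes sense for every $f\in K_2(G)$.

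First I would check that $\theta_{\mu}$ is a well-defined functional on $K_2(G)$ for any $\mu\in\cM^{w}(\widehat{G})$. By definition of weak admissibility (applied on $\widehat{G}$, whose dual is $G$), one has $\reallywidecheck{f}\in L^{1}(\mu)$ for every $f\in K_2(G)$, so the integral $\int_{\widehat{G}}\reallywidecheck{f}(\chi)\,\dd\mu(\chi)$ is absolutely convergent, and linearity in $f$ is immediate. Hence $\theta_{\mu}\in\SM(G)$.

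Next I would identify $\theta_{\mu}$ with the unique Fourier transformable semi-measure whose Fourier transform is $\mu$. Proposition~\ref{prop:bijection_ftsm_sam} produces some $\vartheta\in\SM_{\cF}(G)$ with $\widehat{\vartheta}=\mu$. Applying Lemma~\ref{lem:properties_sm_1}(a) to this $\vartheta$, for every $f\in K_2(G)$ one has $\reallywidecheck{f}\in L^{1}(|\widehat{\vartheta}|)=L^{1}(|\mu|)$ and
\[
\vartheta(f)=\widehat{\vartheta}(\reallywidecheck{f})=\int_{\widehat{G}}\reallywidecheck{f}(\chi)\,\dd\mu(\chi)=\theta_{\mu}(f).
\]
Thus $\vartheta=\theta_{\mu}$ as semi-measures, so in particular $\theta_{\mu}\in\SM_{\cF}(G)$ and $\widehat{\theta_{\mu}}=\mu$. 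This shows that $\mu\mapsto\theta_{\mu}$ is the set-theoretic inverse of $\widehat{\cdot}$, establishing the claim.

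The only place any thought is required is checking that the integral formula makes sense, and this is handled exactly by the weak admissibility of $\mu$; beyond that, everything reduces to quoting Proposition~\ref{prop:bijection_ftsm_sam} and Lemma~\ref{lem:properties_sm_1}(a). Since no genuine obstacle arises, the whole argument can be written in a few lines, which presumably is why the authors stated the result as a \emph{Fact} rather than a theorem.
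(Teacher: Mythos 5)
Your proposal is correct and follows essentially the same route as the paper: the bijectivity is quoted from Proposition~\ref{prop:bijection_ftsm_sam}, and the explicit formula for the inverse is obtained from Lemma~\ref{lem:properties_sm_1}(a) (the paper phrases this as following from the definition of Fourier transformability, which amounts to the same polarisation argument). Your explicit check that weak admissibility makes the integral absolutely convergent is a welcome, if minor, addition.
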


Let us next briefly discuss when is $\reallywidecheck{\mu}$ a measure. We start with the following preliminary result.

\begin{proposition}  \label{prop:E}
Let $\nu\in\cM(\widehat{G})$ be weakly admissible. Then, the following statements are equivalent.
\begin{itemize}
\item[(i)] $\nu$ is the Fourier transform of a measure $\mu\in\cM(G)$.
\item[(ii)] $\vartheta_{\nu}$ can be uniquely extended to a continuous (in the inductive topology) functional on $\Cc(G)$.
\item[(iii)] For each compact set $K$, there exists some $c_K>0$ such that
\[
\left| \int_{\widehat{G}} \reallywidecheck{f}( \chi) \dd \mu( \chi ) \right| = \left|\vartheta_{\nu}(f) \right| \leqslant c_K \|f \|_\infty
\]
for all $f \in K_2(G)$ with $\supp(f) \subseteq K$.
\end{itemize}
\end{proposition}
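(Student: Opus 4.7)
The plan is to run (i)$\Longrightarrow$(ii)$\Longrightarrow$(iii)$\Longrightarrow$(i), leaning on Lemma~\ref{lem FT measure and semi measure}, Lemma~\ref{semi is measure}, and the bijection of Proposition~\ref{prop:bijection_ftsm_sam}.

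For (i)$\Longrightarrow$(ii), I would first invoke Lemma~\ref{lem FT measure and semi measure}: any measure $\mu$ with $\widehat{\mu}=\nu$ is also Fourier transformable as a semi-measure with the same transform $\nu$. The injectivity of $\widehat{\,\cdot\,}$ on $\SM_T(G)$ (Proposition~\ref{prop:bijection_ftsm_sam}) then forces $\vartheta_\nu = \mu$ on $K_2(G)$, so $\mu$ itself is the required continuous (inductive-topology) extension of $\vartheta_\nu$ to $\Cc(G)$. Uniqueness of the extension follows from the density of $K_2(G)$ in $\Cc(G)$ for the inductive topology, exactly as used in the closing step of the proof of Lemma~\ref{pos implies measure}.

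The implication (ii)$\Longrightarrow$(iii) is essentially a reformulation: by the very definition of the inductive topology on $\Cc(G)$, a continuous linear functional satisfies a bound of the form $|\cdot|\leqslant c_K\|\cdot\|_\infty$ on every subspace of functions supported in a compact set $K$, so restricting this bound to $f\in K_2(G)$ with $\supp(f)\subseteq K$ produces (iii) verbatim.

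For (iii)$\Longrightarrow$(i), the bound in (iii) is exactly the hypothesis of Lemma~\ref{semi is measure}, so $\vartheta_\nu$ extends uniquely to a measure $\mu\in\cM(G)$. To identify its Fourier transform with $\nu$, I would combine the fact that $f*\widetilde{f}\in K_2(G)$ for every $f\in\Cc(G)$ with the weak admissibility of $\nu$ (which in particular gives $|\reallywidecheck f|^2 = \reallywidecheck{f*\widetilde f}\in L^1(\nu)$) to obtain
\[
\mu(f*\widetilde{f}) \;=\; \vartheta_\nu(f*\widetilde{f}) \;=\; \int_{\widehat{G}} \reallywidecheck{f*\widetilde f}\,\dd\nu \;=\; \int_{\widehat{G}} |\reallywidecheck f|^2 \,\dd\nu,
\]
which is precisely the defining identity for Fourier transformability of $\mu$ as a measure with $\widehat{\mu}=\nu$. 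No genuine obstacle is anticipated: every ingredient has already been developed earlier in the paper, and the only mildly delicate point --- the density of $K_2(G)$ in $\Cc(G)$ that underwrites both the extension in (i)$\Longrightarrow$(ii) and the passage through Lemma~\ref{semi is measure} --- is standard via an approximate-identity argument.
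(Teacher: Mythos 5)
Your proposal is correct and follows essentially the same route as the paper: the paper runs the cycle (i)$\Rightarrow$(iii)$\Rightarrow$(ii)$\Rightarrow$(i) using the identity $\vartheta_\nu(f)=\mu(f)$ on $K_2(G)$, Lemma~\ref{semi is measure}, and Lemma~\ref{lem FT measure and semi measure}, while you traverse the same cycle in the opposite direction with the same ingredients (your direct verification of $\widehat{\mu}=\nu$ in (iii)$\Rightarrow$(i) just unwinds what the paper delegates to Lemma~\ref{lem FT measure and semi measure}). No gaps.
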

\begin{proof}
(i)$\implies$(iii) Let $\mu$ be a measure such that $\nu=\widehat{\mu}$. Let $K\subseteq G$ be a compact subset, and let $f\in K_2(G)$ be such that $\supp(f)\subseteq K$. Since $\mu\in \cM(G)$, there is a constant $c_K>0$ such that $|\mu(f)|\leqslant c_K\, \|f\|_{\infty}$. Consequently, one has
\[
|\vartheta_{\nu}(f)| = \big|\nu\big(\reallywidecheck{f\, }\big)\big| = |\mu(f)|  \leqslant c_K\, \|f\|_{\infty} \,.
\]
This proves (iii).

\medskip

\noindent (iii)$\implies$(ii) This follows from Lemma~\ref{semi is measure}.

\medskip

\noindent (ii)$\implies$(i) By (ii), $\vartheta_{\nu}$ can be uniquely extended to a measure $\mu$. In that case, $\mu$ is Fourier transformable as a semi-measure, and $\widehat{\mu}=
\nu$. The claim follows from Lemma~\ref{lem FT measure and semi measure}.
\end{proof}

Next, given $\nu \in \cM^{w}(G)$, let us discuss when $\theta_{\nu} \in \cM^\infty(G)$.
The following proposition is an immediate consequence of Proposition~\ref{prop tb}, Lemma~\ref{lem:properties_sm_1}, Lemma~\ref{lem 2} and Proposition~\ref{prop:E}.

\begin{proposition}\label{prop FT of tB}
Let $\nu$ be a weakly admissible measure on $\widehat{G}$ and let $U \subseteq G$ be a fixed open pre-compact set. Then, the following statements are equivalent.
\begin{itemize}
\item[(i)] There exists a Fourier transformable measure $\mu \in \cM^\infty(G)$ such that $\widehat{\mu}=\nu$.
\item[(ii)] For each compact set $K \subseteq G$ there exists some constant $C_K>0$ such that
\[
 \bigg|  \int_{\widehat{G}} \chi(t)\, \widehat{f}(\chi)\, \dd \nu (\chi)  \bigg| < C_K
\]
for all $f \in K_2(G)$ with $\supp(f) \subseteq K$ and all $t \in G$.
\item[(iii)] One has
\[
\sup\left\{ \Big|  \int_{\widehat{G}} \chi(t)\, \widehat{f}(\chi)\, \dd \nu (\chi)  \Big|\, :\, f \in  K_2(G) ,\, \supp(f) \subseteq U ,\, \|f \|_\infty \leqslant 1 \,,\, t \in G \right\} < \infty \,.
\]
\end{itemize}\qed
\end{proposition}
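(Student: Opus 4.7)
The plan is to pass through the bijective correspondence of Proposition~\ref{prop:bijection_ftsm_sam}, recognize the two integral conditions as translation-boundedness criteria applied to the associated semi-measure, and then lift back from semi-measures to measures.

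First, I would fix the unique Fourier-transformable semi-measure $\vartheta = \vartheta_\nu$ with $\widehat{\vartheta} = \nu$ supplied by Proposition~\ref{prop:bijection_ftsm_sam}. Lemma~\ref{lem:properties_sm_1}(b) rewrites
\[
\int_{\widehat{G}} \chi(t)\, \widehat{f}(\chi)\, \dd\nu(\chi) \;=\; (\vartheta * f)(t),
\]
while the very definition of convolution for semi-measures gives $(\vartheta*f)(t) = \vartheta(T_t f^\dagger)$. Since the involution $f \mapsto f^\dagger$ is a bijection of $K_2(G)$ onto itself which preserves $\|\cdot\|_\infty$ and sends $\supp(f)\subseteq K$ to $\supp(f^\dagger)\subseteq -K$, conditions (ii) and (iii) of the present proposition are, after the substitution $g=f^\dagger$ and renaming the compact set, exactly the statements of Proposition~\ref{prop tb}(ii) and Proposition~\ref{prop tb}(iii) applied to $\vartheta$.

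Next, I would invoke Proposition~\ref{prop tb} (whose equivalence (i)$\Leftrightarrow$(ii)$\Leftrightarrow$(iii) is proved there using Lemma~\ref{lem 2} and Lemma~\ref{lem:properties_sm_1}) to conclude that both conditions are equivalent to $\vartheta$ being extendable to a translation-bounded measure $\mu \in \cM^\infty(G)$. Finally, I would close the loop to (i) using Lemma~\ref{lem FT measure and semi measure} (in conjunction with Proposition~\ref{prop:E}): if $\vartheta$ extends to a translation-bounded measure $\mu$, then $\mu$ is Fourier transformable as a measure with $\widehat{\mu}=\nu$; conversely, any $\mu \in \cM_T^\infty(G)$ with $\widehat{\mu}=\nu$ restricts on $K_2(G)$ to $\vartheta$ and provides the required translation-bounded extension.

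The only real obstacle is bookkeeping: one must keep track of the asymmetry $(\vartheta*f)(t)=\vartheta(T_t f^\dagger)$ when translating between the integral formulations in (ii)--(iii) and the semi-measure testing criteria in Proposition~\ref{prop tb}. Because $f \mapsto f^\dagger$ preserves the relevant norms, support conditions, and membership in $K_2(G)$, this causes no genuine difficulty, and the proof reduces to a direct concatenation of Propositions~\ref{prop:bijection_ftsm_sam}, \ref{prop tb}, \ref{prop:E} together with Lemmas~\ref{lem:properties_sm_1}, \ref{lem 2}, and \ref{lem FT measure and semi measure}, as announced in the statement.
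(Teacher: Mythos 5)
Your argument is correct and follows exactly the route the paper itself indicates: the paper gives no written proof beyond declaring the proposition an immediate consequence of Proposition~\ref{prop:bijection_ftsm_sam}, Proposition~\ref{prop tb}, Proposition~\ref{prop:E}, Lemma~\ref{lem:properties_sm_1}, Lemma~\ref{lem 2} and Lemma~\ref{lem FT measure and semi measure}, which is precisely the chain you assemble, with the $f\mapsto f^\dagger$ bookkeeping handled correctly. The only point worth flagging is that you have tacitly read condition (ii) as $|\cdots|\leqslant C_K\,\|f\|_\infty$; this is surely the intended statement (as printed, without the factor $\|f\|_\infty$, a scaling $f\mapsto\lambda f$ would force the integral to vanish identically), and with that reading your identification with Proposition~\ref{prop tb}(ii) is exact.
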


This motivates the following definition.

\begin{definition}
Let $U \subseteq G$ be an open pre-compact set. An admissible measure $\nu$ on $\widehat{G}$ is called \emph{$U$-nice} if
\begin{equation*}
\sup\left\{ \bigg|  \int_{\widehat{G}} \chi(t)\, \widehat{f}(\chi)\, \dd \nu (\chi)  \bigg|\, :\, f \in  K_2^{U}(G),\, t \in G \right\} < \infty \,,
\end{equation*}
where
\[
 K_2^{U}(G):= \{  f \in  K_2(G) ,\, \supp(f) \subseteq U , \|f \|_\infty \leqslant 1 \} \,.
\]
\end{definition}

With this definition, Proposition~\ref{prop FT of tB} says that a semi-measure is $U$-nice if and only if it is the Fourier transform of a translation bounded measure. This implies
that the concept of $U$-nice is independent of the choice of $U$ (compare also \cite{SS2}).

\begin{corollary}
Let $U \subseteq G$ be a fixed open pre-compact set, and let $\nu$ be a weakly admissible measure on $\widehat{G}$. Then $\nu$ is $U$-nice if and only if
$\nu$ is $V$-nice for every open pre-compact set $V \subseteq G$. \qed
\end{corollary}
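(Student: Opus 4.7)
The plan is to route everything through Proposition~\ref{prop FT of tB}. That proposition offers three equivalent characterizations: condition (i) asserts the existence of some $\mu \in \cM^\infty(G)$ with $\widehat{\mu}=\nu$, condition (ii) is a uniform bound over all compact sets, and condition (iii) is exactly the $U$-niceness of $\nu$. Crucially, conditions (i) and (ii) make no reference to the distinguished set $U$ at all; only (iii) does. Hence $U$-niceness is equivalent to the $U$-free property that $\nu$ be the Fourier transform of a translation bounded Fourier transformable measure, and this already shows that the choice of $U$ is cosmetic.

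To carry this out explicitly, I would argue as follows. Suppose $\nu$ is $U$-nice. By Proposition~\ref{prop FT of tB}, the implication (iii)$\implies$(i) applied to $U$ yields a measure $\mu \in \cM^\infty(G) \cap \cM_T(G)$ with $\widehat{\mu}=\nu$. Now let $V \subseteq G$ be any open pre-compact set. Applying the implication (i)$\implies$(iii) of the same proposition, this time with $V$ in place of $U$, the existence of $\mu$ immediately gives that $\nu$ is $V$-nice. The converse direction is trivial, since the hypothesis ``$\nu$ is $V$-nice for every open pre-compact $V \subseteq G$'' covers $V = U$ as a special case.

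I do not anticipate any real obstacle; the content of the statement is entirely absorbed by Proposition~\ref{prop FT of tB}, and the corollary amounts to observing that a property characterized by a $U$-independent condition is itself $U$-independent. The only point to watch is to apply Proposition~\ref{prop FT of tB} twice with different parameter sets (first $U$, then $V$), and to make sure the translation boundedness of the intermediate measure $\mu$ produced in the first application is strong enough to feed back into the second application, which it is, since membership in $\cM^\infty(G) \cap \cM_T(G)$ does not depend on any choice of open pre-compact window.
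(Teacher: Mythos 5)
Your proposal is correct and is exactly the argument the paper intends: the remark immediately preceding the corollary observes that, by Proposition~\ref{prop FT of tB}, $U$-niceness is equivalent to the $U$-independent condition (i), so applying (iii)$\implies$(i) with $U$ and then (i)$\implies$(iii) with an arbitrary $V$ gives the claim, with the converse being the trivial specialisation $V=U$.
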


\section{Positive definite semi-measures}\label{sect: pos def}

In the next section, Corollary~\ref{thm:main_decomp}, we will state and prove the analog of \cite[Thm. 4.3]{SS4} for locally compact Abelian groups in a slightly more general version. The reason is that we can apply it to other problems and partially answer an open question raised Argabright and de Lamadrid, see Corollary~\ref{coro:arma}. In order to do so, we need to introduce the following notion.

\begin{definition}\cite[Def.~C6]{LSS}
A semi-measure $\vartheta$ is called \emph{positive definite} if for all $f \in \Cc(G)$ we have
\[
\vartheta(f*\widetilde{f}) \geqslant 0 \,.
\]
\end{definition}

It is clear that any measure is positive definite as a semi-measure if and only if it is positive definite as measure.

\begin{remark}
\begin{itemize}
\item[(a)] Any semi-measure $\vartheta$ induces a linear function $F_\vartheta: \reallywidehat{K_2(G)} \to \C$ via
\[
F_\vartheta(\hat{f}): = \vartheta(f) \,.
\]
We will refer to $F_\vartheta$ as \emph{the functional associated to $\vartheta$}.
\item[(b)] Any linear function $F: \reallywidehat{K_2(G)} \to \C$ induces a semi-measure $\vartheta$ via
\[
\vartheta(f) := F(\widehat{f})\,.
\]
Moreover, in this case $F$ is the functional associated to $\vartheta$.
\item[(c)] $\vartheta$ is positive definite if and only if its associated functional is positive.
\end{itemize}
\end{remark}

This remark emphasizes that (positive definite) semi-measures can be studied via studying (positive) linear functionals on $\reallywidehat{K_2(G)}$.
Later in the paper, given a functional $F: \reallywidehat{K_2(G)} \to \C$ we will say that it is \emph{induced by a measure} if there exists a measure $\nu$ such that
\[
F(g)= \int_{\widehat{G}} g(t)\ \dd \nu(t)
\]
for all
$g \in \reallywidehat{K_2(G)}$. This relation implicitly asks for the integrability of $g$ with respect to $\nu$. Therefore, we get the following trivial result.

\begin{lemma}
\begin{itemize}
  \item[(a)] Let $F: \reallywidehat{K_2(G)} \to \C$ be induced by a measure $\nu$. Then $\nu \in \cM^{W}(\widehat{G})$.
  \item[(b)] A semi-measure $\vartheta$ is Fourier transformable if and only if the functional $F_\vartheta$ associated to $\vartheta$ is induced by a measure $\nu$. Moreover, in this case, we have $\widehat{\vartheta}= \nu$.
\end{itemize}\qed
\end{lemma}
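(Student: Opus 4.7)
The plan is to reduce both parts to the explicit formula $\vartheta(f)=\widehat\vartheta(\reallywidecheck f)$ from Lemma~\ref{lem:properties_sm_1}(a) together with the bijection in Proposition~\ref{prop:bijection_ftsm_sam}. The only preparation needed is the elementary observation that $K_2(G)$ is closed under the involution $f\mapsto f^\dagger$ (since $(g*h)^\dagger=g^\dagger*h^\dagger$) and that $\widehat{f^\dagger}=\reallywidecheck f$ (a direct computation from the definitions of the two transforms). Part (a) is then immediate: the hypothesis that $F$ is induced by $\nu$ tacitly requires $\widehat f\in L^1(\nu)$ for every $f\in K_2(G)$, so applying it to $f^\dagger\in K_2(G)$ yields $\reallywidecheck f=\widehat{f^\dagger}\in L^1(\nu)$ for every $f\in K_2(G)$; under the Pontryagin identification $\widehat{\widehat G}\cong G$, this is precisely the defining condition of weak admissibility of $\nu$ on $\widehat G$.

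For (b), the forward direction follows by writing
\[
F_\vartheta(\widehat f)=\vartheta(f)=\int_{\widehat G}\reallywidecheck f\,\dd\widehat\vartheta
\]
for every $f\in K_2(G)$ via Lemma~\ref{lem:properties_sm_1}(a), and noting that $\reallywidecheck f=\widehat{f^\dagger}$ sweeps out all of $\reallywidehat{K_2(G)}$ as $f$ ranges over $K_2(G)$, so $F_\vartheta$ is realised as the functional induced by $\widehat\vartheta$. For the converse, suppose $F_\vartheta$ is induced by a measure $\nu$. Part~(a) gives $\nu\in\cM^w(\widehat G)$, so Proposition~\ref{prop:bijection_ftsm_sam} and the explicit inverse formula from the preceding Fact produce a unique $\vartheta_\nu\in\SM_{\cF}(G)$ with $\widehat{\vartheta_\nu}=\nu$, namely $\vartheta_\nu(f)=\int\reallywidecheck f\,\dd\nu$. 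Comparing $\vartheta(f)=\int\widehat f\,\dd\nu$ with $\vartheta_\nu(f)=\int\reallywidecheck f\,\dd\nu$ on $K_2(G)$ and using the involution $f\leftrightarrow f^\dagger$ (which interchanges $\widehat f$ and $\reallywidecheck f$) forces $\vartheta=\vartheta_\nu$ on all of $K_2(G)$, and the bijection then yields $\widehat\vartheta=\nu$ as desired.

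The main obstacle is purely notational: since the induction formula involves $\widehat f$ while Lemma~\ref{lem:properties_sm_1}(a) expresses $\vartheta$ through $\reallywidecheck f$, one must carefully track the involution $f\mapsto f^\dagger$ throughout to pin down the identification $\widehat\vartheta=\nu$ without a spurious $\dagger$ on $\nu$. Once this bookkeeping is in place, both parts become immediate consequences of the already-established Lemma~\ref{lem:properties_sm_1}(a) and Proposition~\ref{prop:bijection_ftsm_sam}.
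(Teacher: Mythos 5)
The paper offers no proof of this lemma (it is flagged as a trivial consequence of the definition of ``induced by a measure'' together with Lemma~\ref{lem:properties_sm_1} and Proposition~\ref{prop:bijection_ftsm_sam}), and your reduction to exactly those two results is the intended route. Part (a) is fine: $K_2(G)$ is $\dagger$-invariant, $\widehat{f^\dagger}=\reallywidecheck{f}$, so the implicit requirement $\widehat{f}\in L^1(\nu)$ for all $f\in K_2(G)$ is the same as $\reallywidecheck{f}\in L^1(\nu)$ for all $f\in K_2(G)$, which is weak admissibility of $\nu$ on $\widehat{G}$.

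In part (b), however, the step you dismiss as ``purely notational'' is exactly where your argument breaks. From $\vartheta(f)=\int\widehat{f}\,\dd\nu$ and $\vartheta_\nu(f)=\int\reallywidecheck{f}\,\dd\nu=\int\widehat{f^\dagger}\,\dd\nu$ you obtain $\vartheta_\nu(f)=\vartheta(f^\dagger)$ for all $f\in K_2(G)$, i.e.\ $\vartheta_\nu=\vartheta^\dagger$ --- not $\vartheta_\nu=\vartheta$. The involution $f\leftrightarrow f^\dagger$ converts one identity into the other; it does not make the two integrals coincide, and the daggers do not cancel. The same reflection occurs in your forward direction: $F_\vartheta(\widehat{f})=\widehat{\vartheta}(\reallywidecheck{f})=\int\widehat{f}\,\dd\bigl(\widehat{\vartheta}\bigr)^{\dagger}$, so $F_\vartheta$ is induced by $(\widehat{\vartheta})^{\dagger}$, not by $\widehat{\vartheta}$. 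With the paper's stated conventions the correct conclusion is therefore $\widehat{\vartheta}=\nu^\dagger$. The equivalence in (b) and all of (a) survive, since $\nu\mapsto\nu^\dagger$ preserves measures and weak admissibility (Lemma~\ref{lem:stronglyadmissible_prop}(e)), but the clause $\widehat{\vartheta}=\nu$ holds only after inserting the $\dagger$ or changing a convention (e.g.\ defining $F_\vartheta$ by $F_\vartheta(\reallywidecheck{f}):=\vartheta(f)$, or defining ``induced by $\nu$'' via $\reallywidecheck{\nu}$ rather than $\widehat{\nu}$). As written, your proof asserts an identity ($\vartheta=\vartheta_\nu$) that your own computation contradicts; you should either show the daggers genuinely cancel (they do not for general $\vartheta$) or state the conclusion as $\widehat{\vartheta}=\nu^\dagger$ and flag the mismatch with the paper's formulation.
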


Next, let us emphasize that by Plancherel Theorem we have
\[
\reallywidehat{K_2(G)} \subseteq L^1(\widehat{G}) \,.
\]
Moreover, by the Riemann--Lebesgue Lemma, we have $\reallywidehat{K_2(G)} \subseteq \Cz(\widehat{G})$. In particular, for all $1 \leqslant p \leqslant \infty$ we have
\[
 \reallywidehat{K_2(G)} \subseteq L^1(\hat{G}) \cap \Cz(\widehat{G}) \subseteq L^p(\widehat{G}) \,.
\]
Finally, we also have $ \reallywidehat{K_2(G)} \subseteq A(G) \subseteq B(G)$.

We can now give some examples of positive definite semi-measures. More such examples will be given by Theorem~\ref{bochner thm} and Cor.~\ref{pos FT}.

\begin{example}
Let $1 \leqslant p \leqslant \infty$ and let $F: L^p(\widehat{G}) \to \C$ be positive linear functional. Then,
\[
\vartheta(f)= F(\widehat{f})
\]
is a positive definite semi-measure.
\end{example}

\medskip

Next, we want to give a Bochner type result for semi-measures, similar to the one for measures. The Fourier theory of positive definite measures is well established \cite{BF,ARMA1}. Many of the proofs rely on the following two relations:
\begin{itemize}
  \item{} If $\mu$ is positive definite and $f \in \Cc(G)$, then $\mu*f*\widetilde{f}$ is a continuous positive definite function.
  \item{} If $\mu$ is a (positive definite) measure and $f,g \in \Cc(G)$, then $\left(\mu*f*\widetilde{f}\right)*(g*\widetilde{g}) = \left(\mu*g*\widetilde{g}\right)*(f*\widetilde{f})$.
\end{itemize}

While the second relation trivially holds for all measures, for semi-measures it is equivalent via the polarisation identity \cite[p. 244]{MoSt} to the semi-measure being intertwining.
We show next that for semi-measures, these conditions are equivalent to semi-translation boundedness and intertwining.
This allows us show that semi-translation bounded, intertwining, positive definite semi-measures are Fourier transformable. Corollary~\ref{cor FT implies tb and inter} implies that semi-translation boundedness and intertwining are necessary conditions for Fourier transformability. We will see next that adding intertwining to positive definiteness and semi-translation boundedness gives Fourier transformability. In particular we will get the following fundamental result about positive definite semi-measures (compare \cite[Rem.~C7]{LSS}).

\begin{theorem}[Bochner's theorem for semi-measures]\label{bochner thm}
Let $\vartheta$ be a semi-measure. Then, the following statements are equivalent.
\begin{itemize}
\item[(i)] For all $f \in \Cc(G)$ the function $\vartheta*(f*\widetilde{f})$ is positive definite and continuous, and $\vartheta$ is intertwining.
\item[(ii)] $\vartheta$ is semi-translation bounded, intertwining and for each $f \in \Cc(G)$ there exists a finite measure $\sigma_f$ on $\widehat{G}$ such that
\[
\vartheta*(f*\widetilde{f})= \reallywidecheck{\sigma_f} \,.
\]
\item[(iii)] $\vartheta$ is Fourier transformable and $\widehat{\vartheta}$ is positive.
\item[(iv)] $\vartheta$ is positive definite, semi-translation bounded and intertwining.
\end{itemize}
\end{theorem}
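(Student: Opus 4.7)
The plan is to establish the four equivalences via the cycle $\text{(iii)} \Rightarrow \text{(iv)} \Rightarrow \text{(i)} \Rightarrow \text{(ii)} \Rightarrow \text{(iii)}$. The implication $\text{(iii)} \Rightarrow \text{(iv)}$ is essentially immediate: if $\vartheta$ is Fourier transformable, then $\vartheta \in \SM^\infty(G)$ and is intertwining by Corollary~\ref{cor FT implies tb and inter}, while positive definiteness follows from the defining identity $\vartheta(f*\widetilde{f}) = \widehat{\vartheta}(|\reallywidecheck{f}|^2) \geqslant 0$, using positivity of $\widehat{\vartheta}$.

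The main technical work lies in $\text{(iv)} \Rightarrow \text{(i)}$. Continuity of $\vartheta*(f*\widetilde{f})$ is automatic from semi-translation boundedness, so the task is to prove positive definiteness of this function. I would apply the standard criterion that a continuous $F$ is positive definite iff $\int F(x)(h*\widetilde{h})(x)\,\dd x \geqslant 0$ for all $h \in \Cc(G)$, and rewrite this integral as $\bigl((\vartheta*(f*\widetilde{f})) * (h*\widetilde{h})^\dagger\bigr)(0)$. The intertwining property of Lemma~\ref{intert} pulls the outer convolution inside $\vartheta$; then, using the identities $\widetilde{a*b} = \widetilde{a}*\widetilde{b}$, commutativity of convolution, $(h*\widetilde{h})^\dagger = h^\dagger * \overline{h}$, and $(\vartheta*p)(0) = \vartheta(p^\dagger)$, a few lines of bookkeeping reduce the integral to $\vartheta\bigl((f^\dagger*h) * \widetilde{f^\dagger*h}\bigr)$, which is non-negative by positive definiteness of $\vartheta$. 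This is the step where intertwining, rather than mere semi-translation boundedness, is essential.

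For $\text{(i)} \Rightarrow \text{(ii)}$, Bochner's theorem applied to the continuous positive definite function $\vartheta*(f*\widetilde{f})$ produces a positive finite measure $\sigma_f$ on $\widehat{G}$ with $\vartheta*(f*\widetilde{f}) = \reallywidecheck{\sigma_f}$. Semi-translation boundedness is upgraded from the class $\{f*\widetilde{f}\}$ to all of $K_2(G)$ via the polarisation identity: every $g \in K_2(G)$ is a finite linear combination of terms $h_j*\widetilde{h_j}$, so $\vartheta*g$ is a finite linear combination of uniformly continuous functions and hence lies in $\Cu(G)$. Intertwining is a hypothesis of~(i).

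Finally, for $\text{(ii)} \Rightarrow \text{(iii)}$, since $\reallywidecheck{\sigma_f} \in B(G)$ and polarisation expresses each $g \in K_2(G)$ as a linear combination of $h_j*\widetilde{h_j}$, one has $\vartheta*g \in B(G)$ for every $g \in K_2(G)$; Lemma~\ref{lem semi FT}(a) then yields Fourier transformability together with the explicit formula $\vartheta*g = \reallywidecheck{\widehat{g}\,\widehat{\vartheta}}$. Comparing with $\vartheta*(f*\widetilde{f}) = \reallywidecheck{\sigma_f}$ and using injectivity of the Fourier transform on finite measures gives $|\widehat{f}|^2 \widehat{\vartheta} = \sigma_f$. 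Since the $\sigma_f$ produced along the cycle by Bochner is positive, and for every $\chi_0 \in \widehat{G}$ one may choose $f \in \Cc(G)$ with $\widehat{f}(\chi_0) \neq 0$ so that $|\widehat{f}|^2 > 0$ on a neighbourhood of $\chi_0$, the measure $\widehat{\vartheta}$ is positive on a neighbourhood of every point and hence globally positive. The main obstacle throughout is the bookkeeping in Step~2; everything else reduces to Bochner's theorem, polarisation, and the structural results for Fourier transformable semi-measures already established.
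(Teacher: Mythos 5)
Your proposal follows the paper's proof almost verbatim: the same cycle of implications (merely entered at a different vertex), the same use of the polarisation identity to pass from the functions $f*\widetilde{f}$ to all of $K_2(G)$, the classical Bochner theorem for (i)$\Rightarrow$(ii), Lemma~\ref{lem semi FT}(a) together with injectivity of the Fourier transform on finite measures for (ii)$\Rightarrow$(iii), and the computation modelled on \cite[Prop.~4.4]{BF} --- with intertwining used exactly where you say it is --- for (iv)$\Rightarrow$(i). All of that is sound.

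The one step that does not stand as written is the positivity of $\widehat{\vartheta}$ in (ii)$\Rightarrow$(iii). You derive $|\widehat{f}|^2\,\widehat{\vartheta}=\sigma_f$ correctly, but then invoke ``the $\sigma_f$ produced along the cycle by Bochner is positive''. That is circular: to prove (ii)$\Rightarrow$(iii) you must start from an arbitrary $\vartheta$ satisfying (ii), and (ii) only records that $\sigma_f$ is a finite measure, not how it arose. If $\sigma_f$ is allowed to be signed, the implication is in fact false: $\vartheta=-\delta_0$ on $\R$ is translation bounded (hence semi-translation bounded and intertwining) and satisfies $\vartheta*(f*\widetilde{f})=\reallywidecheck{\sigma_f}$ with the finite signed measure $\sigma_f=-|\widehat{f}|^2\,\theta_{\widehat{G}}$, yet $\widehat{\vartheta}=-\theta_{\widehat{G}}$ is not positive. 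The intended reading of (ii) --- and the one the paper's own proof silently uses when it asserts that $\sigma_f$ ``is a positive measure'' --- is that $\sigma_f$ be a \emph{positive} finite measure; that is also exactly what Bochner's theorem delivers in (i)$\Rightarrow$(ii). With that reading your argument closes (and your localisation via $|\widehat{f}|^2>0$ near any given $\chi_0^{}$ is a correct way to conclude $\widehat{\vartheta}\geqslant 0$); without it, the step is a genuine gap, one your write-up shares with, and indeed inherits from, the statement itself.
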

\begin{proof}

(i)$\implies$(ii) Since $\vartheta*(f*\widetilde{f})$ is positive definite and continuous we have $\vartheta*(f*\widetilde{f}) \in \Cu(G)$. The polarisation identity then gives that $\vartheta$ is semi-translation bounded.

\medskip

\noindent (ii) This now follows from Bochner's theorem.

\medskip

\noindent (ii)$\implies$(iii) By (ii), for all $f \in \Cc(G)$ we have $\vartheta*(f*\widetilde{f}) \in B(G)$. The polarisation identity then gives $\vartheta*g \in B(G)$ for all $g \in K_2(G)$. Since $\vartheta$ is intertwining, it is Fourier transformable by Lemma~\ref{lem semi FT}, and, for all $f \in \Cc(G)$ we have
\[
\reallywidecheck{\big|\widehat{f} \big|^2 \widehat{\vartheta}}=\vartheta*(f*\widetilde{f})=\reallywidecheck{\sigma_f} \,.
\]
This immediately gives that $\big|\widehat{f} \big|^2 \widehat{\vartheta}=\sigma_f$ is a positive measure for all $f \in \Cc(G)$. The claim follows.

\medskip

\noindent (iii)$\implies$(iv) By Corollary~\ref{cor FT implies tb and inter}, $\vartheta$ is semi-translation bounded and intertwining.
Let $\mu= \widehat{\vartheta}$. Then, $\mu$ is a positive measure. Therefore, we obtain
\[
\vartheta(f*\widetilde{f}) = \int_{\widehat{G}} | \widehat{f}(\chi) |^2\, \dd \mu(\chi) \geqslant  0
\]
for all $f \in \Cc(G)$. This shows that $\vartheta$ is positive definite.

\medskip

\noindent (iv)$\implies$(i) The function $\vartheta*(f*\widetilde{f})$ is continuous by semi-translation boundedness for all $f \in \Cc(G)$. Following the argument of \cite[Prop.~4.4]{BF}, we show that this is also positive definite.
Since $\vartheta$ is intertwining, we have
\begin{align*}
\int_{G} (\vartheta*(f*\widetilde{f}) )(t)\, (g*\widetilde{g})(t)\ \dd t
    &= \big( (\vartheta*(f*\widetilde{f}) )*(g^\dagger *
       \widetilde{g^\dagger})\big)(0)
     = \left(\vartheta*( (f*\tilde{f})*(g^\dagger*\widetilde{g^\dagger})) \right)
       (0) \\
    &= \left(\vartheta*( (f*g^\dagger)*\widetilde{ (f*g^\dagger) }) \right)(0)
     =\vartheta\left( (f^\dagger*g)*\widetilde{ (f^\dagger*g) } \right)
      \geqslant 0
\end{align*}
for all $g \in \Cc(G)$, with the last inequality following from positive definiteness of $\vartheta$ and $f^\dagger*g \in \Cc(G)$. Therefore, $\vartheta*(f*\tilde{f})$ is positive definite by
\cite[Prop.4.1]{BF}. The claim follows.
\end{proof}

\begin{remark}
Let $\vartheta$ be the semi-measure from Example~\ref{ex4} (2). Then, $\vartheta$ is Fourier transformable and
\[
\widehat{\vartheta}= \lm|_{[0, \infty)} \,.
\]
It follows that $\vartheta$ is positive definite, intertwining and semi-translation bounded. Moreover, since $\widehat{\vartheta} \notin \WAP(\R)$ it follows that $\vartheta$ is not a measure. This gives a simple example of an intertwining, semi-translation bounded measure which is not a measure.

More generally, if $\nu$ is any weakly admissible measure such that $\nu \notin \WAP(\widehat{G})$, then
\[
\vartheta(f) := \int_{\widehat{G}} f(\chi) \dd \nu( \chi) \qquad \text{ for all } f \in K_2(G)
\]
is a semi-translation bounded, intertwining, Fourier transformable semi-measure which is not a measure. Moreover, $\vartheta$ is positive definite if and only if $\nu$ is positive.
\end{remark}

The previous theorem and Proposition~\ref{prop:bijection_ftsm_sam} give the next result.

\begin{corollary}\label{pos FT}
Let $\mu$ be a measure on $\widehat{G}$. Then $\mu$ is positive and weakly admissible if and only if there exists a positive definite semi-measure $\vartheta$, which is semi-translation bounded and intertwining, such that $\widehat{\vartheta}=\mu$. \qed
\end{corollary}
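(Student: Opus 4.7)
The corollary is an essentially immediate repackaging of Theorem~\ref{bochner thm} together with the bijection in Proposition~\ref{prop:bijection_ftsm_sam}, so the plan is to simply assemble the two halves and check that the right conditions match up on each side.

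For the forward direction, I would start from the assumption that $\mu$ is positive and weakly admissible on $\widehat{G}$. Weak admissibility alone, via Proposition~\ref{prop:bijection_ftsm_sam}, hands us a (unique) Fourier transformable semi-measure $\vartheta$ with $\widehat{\vartheta}=\mu$. This already places us in setting (iii) of Theorem~\ref{bochner thm}, because $\mu$ is moreover positive. The equivalence (iii)$\Longrightarrow$(iv) in that theorem then yields that $\vartheta$ is positive definite, semi-translation bounded, and intertwining, which is precisely the output we want.

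For the converse, suppose there exists a positive definite, semi-translation bounded, intertwining semi-measure $\vartheta$ with $\widehat{\vartheta}=\mu$. This is exactly condition (iv) of Theorem~\ref{bochner thm}, so the implication (iv)$\Longrightarrow$(iii) tells us that $\vartheta$ is Fourier transformable and that $\mu=\widehat{\vartheta}$ is positive. Fourier transformability of the semi-measure $\vartheta$ together with Proposition~\ref{prop:bijection_ftsm_sam} then gives that $\mu$ lies in $\cM^{w}(\widehat{G})$, i.e.\ is weakly admissible.

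There is really no main obstacle here: all content of the statement is absorbed into the nontrivial implications of Theorem~\ref{bochner thm} (which in particular invokes Bochner's theorem and the polarisation identity) and into the bijection statement of Proposition~\ref{prop:bijection_ftsm_sam}. The only thing one must keep straight is which direction of each of those results is being used, namely (iii)$\Leftrightarrow$(iv) in Theorem~\ref{bochner thm} and both directions of the bijection in Proposition~\ref{prop:bijection_ftsm_sam}. Thus the proof collapses to a one-line citation of these two results.
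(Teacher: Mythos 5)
Your proposal is correct and matches the paper exactly: the paper also derives this corollary immediately from Theorem~\ref{bochner thm} (the equivalence (iii)$\Leftrightarrow$(iv)) combined with the bijection of Proposition~\ref{prop:bijection_ftsm_sam}. Your write-up just spells out the same one-line citation in slightly more detail.
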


One important open question in the theory of Fourier transform of measures is whether each Fourier transformable measure can be written as a linear combination of positive definite measures \cite{ARMA1}. Recent progress answered this question positively for measures with lattice support \cite{CRS4} and measures with Meyer set support \cite{NS20a}, but the question remains open in general.

Below we show that the equivalent question for semi-measures has a positive answer. In particular, we get that a measure $\mu$ on $G$ is Fourier transformable if and only if
it is a linear combination of (at most four) semi-measures which are positive definite, semi-translation bounded and intertwining.

\begin{proposition}\label{prop arma} Let $\vartheta$ be a semi-measure. Then, $\vartheta$ is Fourier transformable if and only if there exist four semi-measures $\vartheta_j$ which are semi-translation bounded, intertwining and positive definite such that
\[
\vartheta=\vartheta_1-\vartheta_2+\im(\vartheta_3-\vartheta_4) \,.
\]
\end{proposition}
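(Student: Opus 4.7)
The plan is to prove the nontrivial direction by transferring the decomposition problem from the semi-measure side to the measure side via the Fourier transform bijection, where the standard Jordan-type decomposition into positive measures is available.

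For the easy ``$\Longleftarrow$'' direction, I would first observe that the class of Fourier transformable semi-measures is a $\C$-vector space on which the Fourier transform is linear: this is clear from the explicit formula $\reallywidecheck{\mu}(f) = \int_{\widehat G} \reallywidecheck{f}\,\dd\mu$ of the inverse map in the bijection of Proposition~\ref{prop:bijection_ftsm_sam}. Then, by Theorem~\ref{bochner thm} ((iv)$\Rightarrow$(iii)), each $\vartheta_j$ is Fourier transformable, and hence so is any complex linear combination.

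For the hard ``$\Longrightarrow$'' direction, suppose $\vartheta$ is Fourier transformable and set $\mu := \widehat{\vartheta}$. By Proposition~\ref{prop:bijection_ftsm_sam}, $\mu$ is a weakly admissible measure on $\widehat{G}$. Write $\mu = \mu_R + \im\,\mu_I$ with $\mu_R := \operatorname{Re}\mu$, $\mu_I := \operatorname{Im}\mu$, and then apply the Jordan decomposition to obtain
\[
\mu = (\mu_R^+ - \mu_R^-) + \im(\mu_I^+ - \mu_I^-) \,.
\]
Each of the four positive measures $\mu_R^\pm, \mu_I^\pm$ satisfies $\mu_R^\pm, \mu_I^\pm \leqslant |\mu|$, since $|\operatorname{Re}\mu|,|\operatorname{Im}\mu| \leqslant |\mu|$ (as the polar representation of $\mu$ shows). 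By Lemma~\ref{lem:stronglyadmissible_prop}(a), $|\mu|$ is weakly admissible, and then by Lemma~\ref{lem:stronglyadmissible_prop}(b) each of $\mu_R^\pm$ and $\mu_I^\pm$ is weakly admissible as well.

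To finish, I would apply Corollary~\ref{pos FT} to each of the four positive weakly admissible measures, obtaining positive definite, semi-translation bounded, intertwining semi-measures $\vartheta_1,\ldots,\vartheta_4$ with $\widehat{\vartheta_1} = \mu_R^+$, $\widehat{\vartheta_2} = \mu_R^-$, $\widehat{\vartheta_3} = \mu_I^+$, $\widehat{\vartheta_4} = \mu_I^-$. The Fourier transform of $\vartheta_1-\vartheta_2+\im(\vartheta_3-\vartheta_4)$ then equals $\mu = \widehat{\vartheta}$, and injectivity in Proposition~\ref{prop:bijection_ftsm_sam} forces
\[
\vartheta = \vartheta_1 - \vartheta_2 + \im(\vartheta_3 - \vartheta_4) \,.
\]
The only potential obstacle is the preservation of weak admissibility through the Jordan decomposition, which is precisely the content of Lemma~\ref{lem:stronglyadmissible_prop}, so in fact no genuine difficulty arises once one invokes the correct results from the preliminary sections.
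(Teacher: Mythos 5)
Your proposal is correct and follows essentially the same route as the paper: decompose $\widehat{\vartheta}$ into real and imaginary parts, apply the Jordan/Hahn decomposition to each, use Lemma~\ref{lem:stronglyadmissible_prop} to see that the four positive pieces remain weakly admissible, and pull back via Corollary~\ref{pos FT}; the converse is Theorem~\ref{bochner thm}. The only cosmetic difference is that you conclude via linearity plus injectivity of the Fourier transform bijection, whereas the paper verifies $\vartheta(f)=\theta_1(f)-\theta_2(f)+\im\theta_3(f)-\im\theta_4(f)$ by direct evaluation on $K_2(G)$ --- both are fine.
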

\begin{proof}
$\Longrightarrow$ Since $\vartheta$ is Fourier transformable, there exists a weakly admissible measure $\mu$ on $\widehat{G}$ such that
\[
\widehat{\vartheta}= \mu \,.
\]
As a Fourier transform, $\mu$ is a weakly admissible measure, and so are $\mbox{Re}(\mu)=\frac{\mu+\bar{\mu}}{2}$ and $\mbox{Im}(\mu)=\frac{\mu-\bar{\mu}}{2i}$ by Lemma~\ref{lem:stronglyadmissible_prop}.
Now, using the Hahn decomposition \cite[Thm.~6.14]{rud}
\[
\mbox{Re}(\mu) =\rho_1-\rho_2 \qquad \text{ and } \qquad
\mbox{Im}(\mu) =\rho_3-\rho_4
\]
as difference of orthogonal positive measures, we get
\[
  |\rho_{1,2}| \leqslant |\mbox{Re}(\mu)| \leqslant |\mu|  \qquad \text{ and } \qquad |\rho_{3,4}| \leqslant |\mbox{Im}(\mu)| \leqslant |\mu|  \,.
\]
This gives
\[
\mu=\rho_1-\rho_2+\im(\rho_3-\rho_4)  \,,
\]
where $\rho_j$ is a positive weakly admissible measures for $1\leqslant j\leqslant 4$. By Corollary~\ref{pos FT}, there exist uniformly positive definite semi-measures $\theta_j$ such that
\[
\widehat{\theta_j}=\rho_j  \,.
\]
Thus, one has
\[
\vartheta(f) = \mu(\reallywidecheck{f})=\rho_1(\reallywidecheck{f})-\rho_2(\reallywidecheck{f})+\im\rho_3(\reallywidecheck{f})-\im\rho_4(\reallywidecheck{f})
=\theta_1(f)-\theta_2(f)+\im\theta_3(f)-\im\theta_4(f)
\]
for all $f \in K_2(G)$. This proves the claim.

\medskip

\noindent $\Longleftarrow$ This is a consequence of Theorem~\ref{bochner thm}.
\end{proof}

\begin{corollary} \label{coro:arma}
Let $\mu$ be a measure on $G$. Then $\mu$ is Fourier transformable if and only if $\mu$ is a linear combination of (at most four) positive definite semi-measures which are semi-translation bounded and intertwining.
\end{corollary}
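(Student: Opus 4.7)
The plan is that this corollary follows essentially immediately from the two results that precede it, namely Proposition~\ref{prop arma} and Lemma~\ref{lem FT measure and semi measure}. Since $\mu$ is assumed to be a measure, it is in particular a semi-measure, so the whole question reduces to matching the two notions of Fourier transformability.

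For the forward direction, suppose $\mu$ is Fourier transformable as a measure. By Lemma~\ref{lem FT measure and semi measure}, $\mu$ is then also Fourier transformable as a semi-measure. Applying Proposition~\ref{prop arma} to the semi-measure $\mu$ yields four semi-measures $\vartheta_1,\vartheta_2,\vartheta_3,\vartheta_4$, each semi-translation bounded, intertwining and positive definite, such that
\[
\mu = \vartheta_1-\vartheta_2 + \im(\vartheta_3-\vartheta_4),
\]
which is exactly the claimed decomposition.

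For the converse, assume such a decomposition exists. Then $\mu$, viewed as a semi-measure on $K_2(G)$, is a linear combination of semi-translation bounded, intertwining, positive definite semi-measures, and hence Fourier transformable as a semi-measure by the reverse direction of Proposition~\ref{prop arma} (which rests on Bochner's theorem for semi-measures, Theorem~\ref{bochner thm}). Invoking Lemma~\ref{lem FT measure and semi measure} once more, since $\mu$ itself is a measure, it is Fourier transformable as a measure, with the same Fourier transform.

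There is essentially no obstacle; the only subtlety worth flagging is that one must not attempt to build the decomposition directly on the level of measures (which is the well-known open problem of Argabright and de Lamadrid recalled just before the statement). The whole point of this corollary is that, by enlarging the class from measures to semi-measures, the decomposition becomes available, and Lemma~\ref{lem FT measure and semi measure} ensures that this enlargement is harmless for the Fourier transformability of $\mu$ itself.
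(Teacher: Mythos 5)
Your proof is correct and follows essentially the same route as the paper, which simply cites the preceding decomposition result together with Lemma~\ref{lem FT measure and semi measure} (the paper's citation actually points at Corollary~\ref{thm:main_decomp}, apparently a slip for Proposition~\ref{prop arma}, which is the result you correctly invoke). Your fleshed-out version of both directions is exactly what the paper intends.
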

\begin{proof}
This follows from Lemma~\ref{thm:main_decomp} and Lemma~\ref{lem FT measure and semi measure}.
\end{proof}

\begin{example}
For finite pure point measures on $\R$, the result of Corollary~\ref{coro:arma} is immediate. For example consider $\mu=\delta_t$ for some $t\in \R$. Then, we have
\[
\widehat{\mu}=\e^{-2\pi\im t \bullet}\lm = \cos(2\pi t\bullet)_+\lm - \cos(2\pi t\bullet)_-\lm + \im \sin(2\pi t\bullet)_+\lm - \im \sin(2\pi t\bullet)_+\lm \,.
\]
All four measures on the right hand side are periodic (hence Fourier transformable) and translation bounded. Therefore, they are twice Fourier transformable, and thus they are Fourier transforms of some measures.

A similar argument shows that the same holds for all finite measures $\mu$ on $\R$.
\end{example}

\medskip

We complete the section by discussing some natural questions about positive definite semi-measures
Let us first introduce some examples of positive definite semi-measures.

\begin{question}\label{Q2}
Let $\vartheta$ be a positive definite semi-measure on $G$.
\begin{itemize}
\item[(a)]  Is $\vartheta$ semi-translation bounded?
\item[(b)] Assume that $\vartheta$ is semi-translation bounded. Is $\vartheta$ intertwining?
\end{itemize}
\end{question}

By looking at the functional associated to $\vartheta$ we can reformulate these two questions as:

\begin{question}\label{Q3} Let $F: \reallywidehat{K_2(G)} \to \C$ be a positive linear functional. Is $F$ induced by a measure?
\end{question}

Now, if every positive linear functional $F: \reallywidehat{K_2(G)} \to \C$ is induced by a measure, this would imply that each positive definite semi-measure is Fourier transformable, and hence semi-translation bounded and intertwining.

On another hand, if every positive definite semi-measure is semi-translation bounded and intertwining, then by Bochner's theorem it would be Fourier transformable. This would imply that each positive linear functional $F: \reallywidehat{K_2(G)} \to \C$ would be induced by a measure.

\smallskip

Let us emphasize here that on many Banach spaces positivity of linear functionals implies continuity. The following result is standard:

\begin{lemma}\cite{stack}\label{lem stack}
Let $B$ be a Banach space of functions, or equivalence classes of functions from $G$ to $\C$, with the property that
\[
|f| \in B \qquad \text{ and } \qquad \| \left| f \right| \| = \| f \| \,.
\]
for all $f \in B$. If $F : B \to \C$ is a positive linear functional, then $F$ is continuous. \qed
\end{lemma}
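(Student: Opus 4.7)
The plan is to argue by contradiction using a standard series construction. First I would suppose $F$ is not continuous, hence unbounded, and pick $f_n \in B$ with $\|f_n\| = 1$ and $|F(f_n)| \geqslant 4^n$ for every $n$.

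My first auxiliary step would be to establish a constant $C > 0$ such that $|F(f)| \leqslant C\, F(|f|)$ for every $f \in B$. For real-valued $f$ this is immediate from positivity applied to the nonnegative elements $|f| \pm f \in B$. For complex $f = u + \im v$, the bounds $|u|, |v| \leqslant |f|$ pointwise, combined with positivity, give $|F(u)|, |F(v)| \leqslant F(|f|)$ and hence $|F(f)| \leqslant 2\, F(|f|)$; so $C = 2$ works. Applied to $f_n$ this yields $F(|f_n|) \geqslant 4^n/C$.

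Next I would set $h_n := 2^{-n}|f_n| \in B$, so that $\|h_n\| = 2^{-n}$ by the hypothesis $\| \,|f|\,\| = \|f\|$. The series $\sum_n h_n$ is then absolutely convergent in $B$ by completeness, and sums to some $h \in B$. For each $N$, the partial sum $S_N := \sum_{n=1}^N h_n$ is pointwise nonnegative, and the tail $h - S_N = \sum_{n > N} h_n$ is a norm-limit of pointwise nonnegative elements of $B$, hence itself pointwise nonnegative (invoking closedness of the positive cone of $B$ in the norm topology). By positivity of $F$, this gives
\[
F(h) \;\geqslant\; F(S_N) \;=\; \sum_{n=1}^N 2^{-n}\, F(|f_n|) \;\geqslant\; \sum_{n=1}^N \frac{2^n}{C} \;\xrightarrow[N\to\infty]{}\; \infty \,,
\]
contradicting $F(h) \in \C$. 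Hence $F$ must be continuous after all.

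The hard part will be the implicit order-theoretic hypotheses on $B$ hidden in the statement: closedness of the positive cone under norm-limits (so that the tail $h - S_N$ is genuinely nonnegative), and, in the complex case, closure of $B$ under passage to real/imaginary parts (so that $u, v \in B$). Both properties hold automatically in every natural Banach function space ($L^p$, $\Cz$, Orlicz, etc.), and the cited StackExchange source presumably disposes of these routine verifications; the substantive content of the lemma is the series trick above, which transforms quantitative failure of boundedness into an actually infinite value of $F$.
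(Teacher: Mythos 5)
Your argument is correct and is precisely the series (gliding-hump) argument from the StackExchange post that the paper cites in lieu of a proof, so the two approaches coincide. The two order-theoretic caveats you flag --- norm-closedness of the positive cone and closure of $B$ under taking real and imaginary parts --- are genuinely needed for the literal statement, but they hold in the only spaces where the paper applies the lemma, namely $\Cz(\widehat{G})$ and $L^p(\widehat{G})$.
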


The next result follows immediately.

\begin{corollary}
Let $\vartheta$ be a positive definite semi-measure and let  $F_\vartheta: \reallywidehat{K_2(G)} \to \C$ be the associated linear functional.
\begin{itemize}
  \item[(a)] $F_\vartheta$ can be extended to a positive linear functional on $\Cz(\widehat{G})$ if and only if $F_\vartheta$ is induced by a finite positive measure.
  \item[(b)] For $1 \leqslant p < \infty$ and let $q$ be the conjugate of $p$.Then, the functional $F_\vartheta$ can be extended to a positive linear functional on $L^p(\widehat{G})$ if and only if $F_\vartheta$ is induced by an absolutely continuous positive measure $\mu$ with density function $g \in L^q(\widehat{G})$.
\end{itemize}
In particular, in all these cases $\vartheta$ is Fourier transformable, and hence semi-translation bounded and intertwining.
\end{corollary}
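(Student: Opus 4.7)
The plan is to prove both directions via representing the extended functionals by measures using the appropriate Riesz-type theorem, with continuity coming for free from positivity via Lemma~\ref{lem stack}, and then to appeal to part~(b) of the unlabelled lemma preceding the corollary in order to pass from ``$F_\vartheta$ is induced by a measure'' to ``$\vartheta$ is Fourier transformable''.

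For part~(a), the $(\Longleftarrow)$ direction is immediate: if $F_\vartheta(g)=\int_{\widehat{G}} g\,\dd\mu$ for a finite positive measure $\mu$, then the same formula defines a positive linear extension to all of $\Cz(\widehat{G}) \subseteq L^1(\mu)$. For $(\Longrightarrow)$, observe that $(\Cz(\widehat{G}),\|\cdot\|_\infty)$ satisfies the hypothesis of Lemma~\ref{lem stack}, so any positive linear extension $\tilde F : \Cz(\widehat{G}) \to \C$ is automatically continuous. The Riesz representation theorem for $\Cz(\widehat{G})$ then yields a finite positive measure $\mu$ on $\widehat{G}$ with $\tilde F(g)=\int g\,\dd\mu$ for all $g \in \Cz(\widehat{G})$; restricting to $g \in \reallywidehat{K_2(G)} \subseteq \Cz(\widehat{G})$ shows that $F_\vartheta$ is induced by $\mu$.

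For part~(b), $(\Longleftarrow)$ follows from H\"older's inequality: if $F_\vartheta$ is induced by $g\,\theta_{\widehat{G}}$ with $0 \leqslant g \in L^q(\widehat{G})$, then $h \mapsto \int h g \,\dd\theta_{\widehat{G}}$ is a well-defined positive linear functional on $L^p(\widehat{G})$ extending $F_\vartheta$, since $\reallywidehat{K_2(G)} \subseteq L^p(\widehat{G})$. For $(\Longrightarrow)$, $L^p(\widehat{G})$ also satisfies the hypothesis of Lemma~\ref{lem stack}, so any positive extension $\tilde F$ is continuous. The $L^p$--$L^q$ duality (using $1 \leqslant p < \infty$) produces some $g \in L^q(\widehat{G})$ with $\tilde F(h)=\int h g \,\dd\theta_{\widehat{G}}$, and testing positivity of $\tilde F$ against the indicator function of $\{g<0\}$ forces $g \geqslant 0$ almost everywhere; hence $F_\vartheta$ is induced by the absolutely continuous positive measure $g\,\theta_{\widehat{G}}$.

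For the ``in particular'' clause, in each of the two cases $F_\vartheta$ is induced by a positive measure $\mu$, so part~(b) of the unlabelled lemma preceding this corollary gives that $\vartheta$ is Fourier transformable with $\widehat{\vartheta}=\mu$, and Corollary~\ref{cor FT implies tb and inter} then yields that $\vartheta$ is semi-translation bounded and intertwining. The main technical point is really just pairing Lemma~\ref{lem stack} with the correct Riesz-type representation in each setting; the only genuinely new small computation is the verification that the $L^q$-density in~(b) is non-negative, and this is a standard exercise in measure theory.
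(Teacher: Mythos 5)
Your proof is correct and follows essentially the same route as the paper, which simply cites Lemma~\ref{lem stack} together with the Riesz representation theorem for (a) and $L^p$--$L^q$ duality for (b); you have merely written out the details, including the forward/backward directions and the passage to Fourier transformability via the lemma on functionals induced by measures and Corollary~\ref{cor FT implies tb and inter}. The only micro-caveat is that in (b) the set $\{g<0\}$ may have infinite Haar measure, so one should test against indicators of its finite-measure subsets, but as you say this is a standard exercise.
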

\begin{proof}
(a) Follows immediately from Lemma~\ref{lem stack} and the Riesz representation theorem \cite{Rud}.

(b) Follows from Lemma~\ref{lem stack} and the fact that for $1 \leqslant p < \infty$ the dual space of $L^p(G)$ is $L^q(G)$.
\end{proof}

The above results emphasize that any negative answer to Question~\ref{Q2} or Question~\ref{Q3} would yield a positive linear functional $F : K_2(G) \to \C$ which is not induced by a measure, and therefore, cannot be extended to a positive functional neither on $\Cz(G)$ nor $L^p(G)$ for all $1 \leqslant p < \infty$.

\section{Almost periodic semi-measures}\label{sect almost per}

In this section, we introduce the notion of almost periodicity for semi-measures, discuss their Eberlein decomposition and establish the generalized Eberlein decomposition for Fourier transformable semi-measures. The approach to almost periodicity is the standard adaptation of \cite{ARMA,MoSt}, compare \cite{ST}.

\begin{definition}
A semi-measure $\vartheta$ is called \emph{strongly, weakly} or \emph{null weakly} almost periodic if $\vartheta * f \in SAP(G)$, $\vartheta * f \in WAP(G)$ or $\vartheta * f \in WAP_0(G)$, respectively, for all $f \in K_2(G)$. We denote the corresponding spaces of measures by $\sSAP(G)$, $\sWAP(G)$ and $\sWAP_0(G)$, respectively.
\end{definition}

Since $SAP(G)$ and $WAP_0(G)$ are subspaces of $WAP(G) \subseteq \Cu(G)$, the following result holds trivially.

\begin{lemma}
One has the following relations:
\[
\sSAP(G), \sWAP_0(G) \subseteq \sWAP(G) \subseteq \SM^{\infty}(G) \,.
\]
Moreover, $\sSAP(G)$, $\sWAP_0(G)$ and $\sWAP(G)$ are subspaces of $\SM^{\infty}(G)$.
\qed
\end{lemma}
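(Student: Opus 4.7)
The plan is to observe that both assertions reduce to standard closure properties of the scalar function spaces $SAP(G)$, $WAP(G)$, $WAP_0(G)$ together with the bilinearity of the convolution $\SM(G)\times K_2(G)\to F(G)$ established in Lemma~\ref{lem:popert_sm}(a).

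First I would handle the chain of inclusions. The containments $SAP(G), WAP_0(G)\subseteq WAP(G)$ are standard for scalar functions (see e.g.\ \cite{MoSt}), so if $\vartheta\in\sSAP(G)$, then for every $f\in K_2(G)$ we have $\vartheta*f\in SAP(G)\subseteq WAP(G)$, hence $\vartheta\in\sWAP(G)$; the same argument with $WAP_0(G)$ in place of $SAP(G)$ gives $\sWAP_0(G)\subseteq\sWAP(G)$. For the remaining inclusion $\sWAP(G)\subseteq\SM^{\infty}(G)$, I would use the standard fact that $WAP(G)\subseteq\Cu(G)$: if $\vartheta\in\sWAP(G)$, then $\vartheta*f\in WAP(G)\subseteq\Cu(G)$ for every $f\in K_2(G)$, which is exactly the defining condition for $\vartheta\in\SM^{\infty}(G)$.

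Second, for the subspace claim, I would invoke Lemma~\ref{lem:popert_sm}(a), which says that $(\vartheta,f)\mapsto \vartheta*f$ is bilinear. Thus for $\vartheta_1,\vartheta_2\in\sWAP(G)$, any $c_1,c_2\in\C$, and any $f\in K_2(G)$,
\[
(c_1\vartheta_1+c_2\vartheta_2)*f = c_1(\vartheta_1*f)+c_2(\vartheta_2*f)\in WAP(G),
\]
since $WAP(G)$ is itself a $\C$-vector space. Hence $c_1\vartheta_1+c_2\vartheta_2\in\sWAP(G)$, proving that $\sWAP(G)$ is a subspace of $\SM^{\infty}(G)$. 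The identical argument, with $WAP(G)$ replaced by $SAP(G)$ or $WAP_0(G)$, handles $\sSAP(G)$ and $\sWAP_0(G)$, again using that each of these is a vector space of functions.

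There is no genuine obstacle here: the lemma is a direct transport of well-known scalar-valued properties through the linear convolution map, and the only thing one has to verify is that the defining spaces for strongly/weakly/null weakly almost periodic functions are vector subspaces of $\Cu(G)$, which is classical.
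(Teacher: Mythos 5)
Your proof is correct and follows exactly the route the paper takes: the paper dispenses with this lemma in one sentence, noting that $SAP(G)$ and $WAP_0(G)$ are subspaces of $WAP(G)\subseteq \Cu(G)$, which together with the (bi)linearity of $\vartheta \mapsto \vartheta*f$ gives both the inclusions and the subspace claims. Nothing is missing.
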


Next, we can talk about the mean and the Fourier--Bohr coefficients of weakly almost periodic semi-measures.

\begin{proposition}\label{prop FB coeff}
\begin{itemize}
\item[(a)] For all $\vartheta \in \sWAP(G)$, $f \in K_2(G)$ and $\chi \in \widehat{G}$, the Fourier--Bohr coefficient
\[
a_{\chi}(\vartheta*f):= \lim_{n\to\infty} \frac{1}{|A_n|} \int_{A_n} \overline{\chi(t)}\, (\vartheta*f)(t)\ \dd t
\]
exists and is independent of the choice of the van Hove sequence $(A_n)_{n\in\N}$.
\item[(b)] Let $\vartheta \in \SM^\infty(G)$ be intertwining, and let $\chi \in \widehat{G}$. If the Fourier--Bohr coefficient $a_{\chi}(\vartheta*f)$ exists for all $f \in K_2(G)$, then there exists a number $a_\chi(\vartheta)$ such that
\[
a_{\chi}(\vartheta*f)=a_{\chi}(\vartheta)\, \widehat{f}(\chi) \,.
\]
for all $f \in K_2(G)$.
\end{itemize}
\end{proposition}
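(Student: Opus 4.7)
For part (a), I would observe that since $\vartheta \in \sWAP(G)$, the function $\vartheta*f$ lies in $WAP(G)$. The character $\overline{\chi}$ is strongly almost periodic, hence also in $WAP(G)$; since $WAP(G)$ is a (uniformly closed) subalgebra of $C_b(G)$ by Eberlein, the product $\overline{\chi}(\vartheta*f)$ is itself weakly almost periodic. The mean result cited from \cite[Prop.~4.5.9]{MoSt} in the preliminaries then guarantees that $M(\overline{\chi}(\vartheta*f))$ exists uniformly in translates for any van Hove sequence, and evaluating the defining limit at $s=0$ gives the Fourier--Bohr coefficient $a_\chi(\vartheta*f)$.

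For part (b), the plan is to derive a symmetry relation using a convolution rule for $a_\chi$ together with the intertwining hypothesis. As a \emph{first step}, I would establish the convolution rule: for any $h \in \Cu(G) \cap L^\infty(G)$ whose Fourier--Bohr coefficient $a_\chi(h)$ exists along the fixed van Hove sequence $(A_n)$, and any $g \in K_2(G)$, one has $a_\chi(h*g) = \widehat{g}(\chi)\, a_\chi(h)$. This follows by Fubini (applicable since $g$ is compactly supported and $h$ bounded) and the substitution $u = t - s$, which yields
\[
\frac{1}{|A_n|} \int_{A_n} \overline{\chi(t)}\, (h*g)(t)\ \dd t = \int_G g(s)\, \overline{\chi(s)} \cdot \frac{1}{|A_n|} \int_{A_n - s} \overline{\chi(u)}\, h(u)\ \dd u\ \dd s \,.
\]
The van Hove property controls $|(A_n - s) \triangle A_n|/|A_n|$ uniformly for $s$ in the compact support of $g$, so the inner expression converges uniformly in $s$ to $a_\chi(h)$, and dominated convergence finishes the argument.

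As a \emph{second step}, I would apply the intertwining property to obtain $(\vartheta*f)*g = (\vartheta*g)*f$ for all $f,g \in K_2(G)$ (Lemma~\ref{intert}). Taking Fourier--Bohr coefficients and using the rule just derived with $h = \vartheta*f$ and $h = \vartheta*g$ respectively yields the symmetry
\[
\widehat{g}(\chi)\, a_\chi(\vartheta*f) = \widehat{f}(\chi)\, a_\chi(\vartheta*g) \qquad \text{for all } f,g \in K_2(G) \,.
\]
As a \emph{final step}, pick some $f_0 \in K_2(G)$ with $\widehat{f_0}(\chi) \neq 0$, which exists by \cite[Prop.~2.4]{BF} (or \cite[Cor.~4.9.12]{MoSt}), and define $a_\chi(\vartheta) := a_\chi(\vartheta*f_0)/\widehat{f_0}(\chi)$; the symmetry then forces $a_\chi(\vartheta*f) = a_\chi(\vartheta)\, \widehat{f}(\chi)$ for all $f \in K_2(G)$.

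The main obstacle is the convolution rule in the first step: we cannot appeal to $h \in WAP(G)$ as in (a), since $\vartheta$ is only assumed semi-translation bounded and intertwining, not weakly almost periodic, so the only information available is the mere existence of $a_\chi(\vartheta*f)$ along a fixed van Hove sequence. It is the van Hove property (rather than any uniform almost periodicity) that is strong enough to force the partial means to be essentially shift-invariant for compact shifts, thereby allowing the shift of $A_n$ to be converted into a factor of $\widehat{g}(\chi)$.
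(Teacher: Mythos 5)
Your proof is correct, and its overall skeleton matches the paper's: establish the convolution rule $a_\chi(h*g)=\widehat{g}(\chi)\,a_\chi(h)$, combine it with the intertwining identity $(\vartheta*f)*g=(\vartheta*g)*f$ to get the symmetry $\widehat{g}(\chi)\,a_\chi(\vartheta*f)=\widehat{f}(\chi)\,a_\chi(\vartheta*g)$, and then normalise with a single $f_0$ having $\widehat{f_0}(\chi)\neq 0$ (the paper takes $\widehat{h}(\chi)=1$, which is the same move). For part (a) you supply the small missing detail that $\overline{\chi}\cdot(\vartheta*f)$ is again weakly almost periodic because $W\hspace*{-2pt}AP(G)$ is a closed subalgebra containing the characters; the paper simply cites the mean-existence result.

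Where you genuinely diverge is in the justification of the convolution rule in part (b). The paper obtains it by asserting $\vartheta*f\in W\hspace*{-2pt}AP(G)$ (via a citation to an external corollary) and then invoking the standard identity for weakly almost periodic functions. But the hypotheses of (b) only give $\vartheta\in\SM^\infty(G)$ intertwining with existing Fourier--Bohr coefficients, so $\vartheta*f$ is merely a bounded uniformly continuous function whose coefficient happens to exist; weak almost periodicity is not available, and indeed the proposition is stated precisely so that it can be applied in that weaker setting. Your direct argument -- Fubini over the compact support of $g$, the substitution $u=t-s$, and the van Hove estimate
\[
\Bigl|\tfrac{1}{|A_n|}\textstyle\int_{A_n-s}\overline{\chi(u)}h(u)\,\dd u-\tfrac{1}{|A_n|}\textstyle\int_{A_n}\overline{\chi(u)}h(u)\,\dd u\Bigr|\leqslant \tfrac{|(A_n-s)\triangle A_n|}{|A_n|}\,\|h\|_\infty \,,
\]
uniform for $s$ in a compact set -- proves the rule under exactly the stated hypotheses and makes the argument self-contained. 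The trade-off is a longer computation in exchange for not importing WAP theory; your version is the more robust of the two and, as written, actually closes a small gap in the paper's own justification of this step.
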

\begin{proof}
(a) This follows from \cite[Prop. 4.5.9]{MoSt}.

\medskip

\noindent (b) Let $f,g \in K_2(G)$, and let $\chi \in \widehat{G}$. Since $\vartheta*f \in WAP(G)$ by \cite[Cor. 3.7]{LSS}, we have
\[
a_{\chi}((\vartheta*f)*g)=\widehat{g}(\chi)\, a_{\chi}(\vartheta*f) \,.
\]
Similarly,
\[
a_{\chi}((\vartheta*g)*f)=\widehat{f}(\chi)\, a_{\chi}(\vartheta*g) \,.
\]
Now, since $\vartheta$ is intertwining, we have $(\vartheta*f)*g=(\vartheta*g)*f$. This shows that
\begin{equation}\label{eq7}
\widehat{g}(\chi)a_{\chi}(\vartheta*f))=\widehat{f}(\chi)a_{\chi}(\vartheta*g)
\end{equation}
for all $f,g \in K_2(G)$ and $\chi \in \widehat{G}$.
Now, let $\chi \in \widehat{G}$ be arbitrary. Fix some $h \in K_2(G)$ such that $\widehat{h}(\chi)=1$. Set
\[
a_{\chi}(\vartheta):= a_{\chi}(\vartheta*h)
\]
Then, Eq.~\eqref{eq7} implies
\[
\widehat{f}(\chi)\, a_{\chi}(\vartheta)=\widehat{f}(\chi) \, a_{\chi}(\vartheta*h)=\widehat{h}(\chi) \, a_{\chi}(\vartheta*f)=a_{\chi}(\vartheta*f)
\]
for all $f \in K_2(G)$, which proves the claim.
\end{proof}

The next result is an immediate consequence.

\begin{corollary}\label{cor 2}
Let $\vartheta \in \sWAP(G)$ be intertwining. Then, for each $\chi\in\widehat{G}$, there exists a number $a_{\chi}(\vartheta)$ such that
\[
a_{\chi}(\vartheta*f)=a_{\chi}(\vartheta) \, \widehat{f}(\chi) \,.
\]
for all $f \in K_2(G)$. \qed
\end{corollary}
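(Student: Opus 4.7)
The plan is to obtain the statement as a direct consequence of Proposition~\ref{prop FB coeff}, so the work reduces to verifying that both of its hypotheses are satisfied by $\vartheta$. First, I would note that the hypothesis $\vartheta \in \sWAP(G)$ together with the lemma preceding Proposition~\ref{prop FB coeff} gives $\vartheta \in \SM^\infty(G)$, so $\vartheta$ is semi-translation bounded. Combined with the assumption that $\vartheta$ is intertwining, this places $\vartheta$ in the setting where part (b) of Proposition~\ref{prop FB coeff} applies.

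Next, I would invoke Proposition~\ref{prop FB coeff}(a): since $\vartheta \in \sWAP(G)$, for every $f \in K_2(G)$ the convolution $\vartheta * f$ lies in $WAP(G)$, and therefore the Fourier--Bohr coefficient
\[
a_\chi(\vartheta * f) = \lim_{n \to \infty} \frac{1}{|A_n|} \int_{A_n} \overline{\chi(t)}\, (\vartheta * f)(t)\, \dd t
\]
exists for every $\chi \in \widehat{G}$ and is independent of the choice of van Hove sequence $(A_n)_{n \in \N}$. In particular, the existence hypothesis in Proposition~\ref{prop FB coeff}(b) is satisfied for every $f \in K_2(G)$ simultaneously.

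Finally, applying Proposition~\ref{prop FB coeff}(b) for each $\chi \in \widehat{G}$, I would obtain a scalar $a_\chi(\vartheta) \in \C$ such that
\[
a_\chi(\vartheta * f) = a_\chi(\vartheta)\, \widehat{f}(\chi)
\]
for every $f \in K_2(G)$, which is precisely the desired conclusion. Since both parts (a) and (b) of Proposition~\ref{prop FB coeff} have already been proved, there is no real obstacle here; the corollary is essentially a bookkeeping remark that, under the stronger hypothesis $\vartheta \in \sWAP(G)$, the existence assumption in (b) is automatic, so one gets a single coefficient $a_\chi(\vartheta)$ that encodes all the coefficients $a_\chi(\vartheta * f)$ through multiplication by $\widehat{f}(\chi)$.
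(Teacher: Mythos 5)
Your proof is correct and is exactly the argument the paper intends: the corollary is stated as an immediate consequence of Proposition~\ref{prop FB coeff}, with part (a) supplying the existence hypothesis needed for part (b), and the containment $\sWAP(G) \subseteq \SM^\infty(G)$ ensuring $\vartheta$ is semi-translation bounded. Nothing is missing.
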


\begin{definition}
Let $\vartheta \in \sWAP(G)$ be intertwining. Then, for each $\chi \in \widehat{G}$, the number $a_{\chi}(\vartheta)$ from Corollary~\ref{cor 2} is called the \emph{Fourier--Bohr coefficient of $\vartheta$ at $\chi$}.

The formal sum $\sum_{\chi \in \widehat{G}} a_{\chi}(\vartheta)\, \delta_\chi$ is called the \emph{Fourier--Bohr series of $\vartheta$}.
\end{definition}

We can now give a different characterization for $\sWAP_0(G)$ similar to \cite{ARMA}.

\begin{lemma}
Let $\vartheta \in \sWAP(G)$ be intertwining. Then, $\vartheta \in \sWAP_0(G)$ if and only if
\[
a_\chi(\vartheta)=0 \qquad \text{ for all } \chi \in \widehat{G} \,.
\]
\end{lemma}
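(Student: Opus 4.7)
The plan is to reduce the statement to the analogous fact for scalar functions in $WAP(G)$, namely that $h \in WAP(G)$ lies in $WAP_{0}(G)$ if and only if $a_{\chi}(h) = 0$ for every $\chi \in \widehat{G}$. This is a classical result in the Eberlein/Godement theory of weakly almost periodic functions (cf.\ \cite{ARMA,MoSt}), so I would simply quote it.

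For the direction $\Longrightarrow$, suppose $\vartheta \in \sWAP_{0}(G)$. Then by definition $\vartheta * f \in WAP_{0}(G)$ for every $f \in K_2(G)$. Applied to $h = \vartheta * f$, the quoted fact (or directly the trivial estimate $|a_{\chi}(h)| = |M(\overline{\chi}\, h)| \leqslant M(|h|) = 0$) yields $a_{\chi}(\vartheta * f) = 0$ for all $\chi \in \widehat{G}$ and all $f \in K_2(G)$. Since $\vartheta$ is intertwining and weakly almost periodic, Corollary~\ref{cor 2} gives $a_{\chi}(\vartheta)\, \widehat{f}(\chi) = 0$ for all such $f$ and $\chi$. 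For any fixed $\chi$ one can pick $f \in K_2(G)$ with $\widehat{f}(\chi) \neq 0$ (for instance via \cite[Prop.~2.4]{BF} or \cite[Cor.~4.9.12]{MoSt}), whence $a_{\chi}(\vartheta) = 0$ for every $\chi$.

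For the direction $\Longleftarrow$, assume $a_{\chi}(\vartheta) = 0$ for all $\chi \in \widehat{G}$, and fix $f \in K_2(G)$. Again by Corollary~\ref{cor 2}, $a_{\chi}(\vartheta * f) = a_{\chi}(\vartheta)\, \widehat{f}(\chi) = 0$ for every $\chi$. Since $\vartheta \in \sWAP(G)$, we have $\vartheta * f \in WAP(G)$, and the quoted characterization of $WAP_{0}(G)$ inside $WAP(G)$ by the vanishing of all Fourier--Bohr coefficients yields $\vartheta * f \in WAP_{0}(G)$. As $f \in K_2(G)$ was arbitrary, this says precisely $\vartheta \in \sWAP_{0}(G)$.

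The only nontrivial ingredient is the scalar fact that a WAP function with vanishing Fourier--Bohr spectrum is null weakly almost periodic; this is the usual obstruction in Eberlein-type arguments, but since the paper has already placed itself in the framework of \cite{ARMA,MoSt} and has invoked the mean $M$ of WAP functions, no new work is needed here. Everything else is a direct combination of Corollary~\ref{cor 2} with the definition of $\sWAP_{0}(G)$.
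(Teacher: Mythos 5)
Your proof is correct and follows essentially the same route as the paper: both arguments reduce the statement to the scalar characterization of $WAP_0(G)$ inside $WAP(G)$ via the identity $a_\chi(\vartheta*f)=a_\chi(\vartheta)\,\widehat{f}(\chi)$ from Corollary~\ref{cor 2}. The only (cosmetic) difference is that the paper justifies the scalar ingredient through the Parseval-type identity $M(|h|^2)=\sum_{\chi}|a_\chi(h)|^2$ from \cite{MoSt}, whereas you quote the characterization of $WAP_0(G)$ by vanishing Fourier--Bohr coefficients directly.
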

\begin{proof}
Since $\vartheta \in \sWAP(G)$, by \cite[Thm.~4.6.12]{MoSt}, we have
\[
M(|\vartheta*f|^2)=\sum_{\chi \in \widehat{G}} | a_\chi(\vartheta*f)|^2
\]
for all $f \in K_2(G)$.
Therefore, by \cite[Remark~4.7.2.]{MoSt}, one has $\vartheta \in \sWAP_0(G)$ if and only if
\[
\sum_{\chi \in \widehat{G}} | a_\chi(\vartheta*f) |^2 =0
\]
for all $f \in K_2(G)$.
The claim follows now from Corollary~\ref{cor 2}.
\end{proof}

For translation bounded measures, almost periodicity as measure and semi-measure coincide. This is an immediate consequence of \cite[Prop.~4.10.3.]{MoSt}.

\begin{lemma}\label{ms vs semi}
We have the following identities of spaces:
\begin{align*}
  \sWAP(G) \cap \cM^\infty(G) &=   \WAP(G)  \,,  \\
  \sSAP(G) \cap \cM^\infty(G) &=   \SAP(G)  \,,  \\
  \sWAP_0(G) \cap \cM^\infty(G) &=   \WAP_0(G) \,.
\end{align*}\qed
\end{lemma}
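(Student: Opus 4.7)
The plan is to prove each of the three identities by a double inclusion. For each, one direction is immediate from $K_2(G) \subseteq \Cc(G)$: if $\mu \in \WAP(G)$ (resp.\ $\SAP(G)$, $\WAP_0(G)$), the defining condition for all $f \in \Cc(G)$ in particular holds for all $f \in K_2(G)$, and translation boundedness is built into those definitions, so $\mu \in \sWAP(G) \cap \cM^\infty(G)$ (resp.\ the analogous intersections).

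For the reverse inclusion, I would argue by approximation. Fix $\mu \in \cM^\infty(G)$ and suppose $\mu*g \in WAP(G)$ for all $g \in K_2(G)$. Given arbitrary $f \in \Cc(G)$, pick an approximate identity $(e_\alpha)_\alpha$ in $\Cc(G)$. By definition $f*e_\alpha \in K_2(G)$, so $\mu*(f*e_\alpha) \in WAP(G)$. Since $\mu \in \cM^\infty(G)$, the convolution $\mu*f$ belongs to $\Cu(G)$ and one has the associativity
\[
(\mu*f)*e_\alpha = \mu*(f*e_\alpha),
\]
with $(\mu*f)*e_\alpha \to \mu*f$ in the supremum norm (this is where translation boundedness and $\mu*f \in \Cu(G)$ are used). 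Because $WAP(G)$ is uniformly closed in $\Cu(G)$, we conclude $\mu*f \in WAP(G)$, giving the first identity. The identical argument with $SAP(G)$ replacing $WAP(G)$ (which is likewise uniformly closed) yields the second identity.

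For the third identity, by the already established case we know $\mu \in \WAP(G)$, so it only remains to verify $M(|\mu*f|) = 0$ for all $f \in \Cc(G)$. From the uniform convergence $(\mu*f)*e_\alpha \to \mu*f$, the reverse triangle inequality gives $|\mu*(f*e_\alpha)| \to |\mu*f|$ uniformly, and since the mean $M$ is continuous on $WAP(G)$ with respect to $\|\cdot\|_\infty$, the vanishing $M(|\mu*(f*e_\alpha)|) = 0$ passes to the limit.

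There is no real obstacle: the only thing to check is the approximate-identity convergence in $\Cu(G)$, which is standard for translation bounded measures (this is presumably the content of \cite[Prop.~4.10.3]{MoSt}), and the closedness of $WAP(G)$ and $SAP(G)$ under uniform limits together with uniform continuity of the mean on $WAP(G)$. Once these are in hand, the three identities follow by the same template.
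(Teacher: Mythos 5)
Your proof is correct. The paper itself gives no argument here, deferring the entire lemma to \cite[Prop.~4.10.3]{MoSt}; your double-inclusion argument via an approximate identity --- using that $f*e_\alpha\in K_2(G)$, that $(\mu*f)*e_\alpha=\mu*(f*e_\alpha)\to\mu*f$ uniformly for translation bounded $\mu$, the uniform closedness of $WAP(G)$ and $SAP(G)$, and the $\|\cdot\|_\infty$-continuity of the mean --- is exactly the standard reasoning behind that citation, so it is the same approach in substance.
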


\begin{remark}
Let $\mu$ be a positive definite measure which is not translation bounded (see again \cite[Prop. 7.1]{ARMA1}). Then, $\mu$ is Fourier transformable as measure, and hence Fourier transformable as semi-measure. Theorem~\ref{genEbe}(a) below then implies that $\mu \in \sWAP(G)$. But, as $\WAP(G) \subseteq \cM^\infty(G)$, we have $\mu \notin \WAP(G)$.
\end{remark}

\medskip

Next, we discuss the completion of these spaces.

\begin{example}
Let $\mu \in \cM(\R)$ be any measure. Define
\[
  \mu_n =\mu|_{[-n,n]} \qquad \text{ and } \qquad
  \nu_n = \mu_n*{\delta_{2n\Z}}
\]
for all $n\in\N$. Then, $\mu_n \in \cM^\infty_0(\R) \subseteq \WAP_0(\R)$ by \cite[Lem.~17]{SS3}.
Moreover, $\nu_n$ is fully periodic and hence $\nu_n \in \SAP(\R)$. In particular,  Lemma~\ref{ms vs semi} implies that $\mu_n \in \sWAP_0(\R)$ and $\nu_n \in \sSAP(\R)$.
By construction, the sequences $(\mu_n)_{n\in\N}$ and  $(\nu_n)_{n\in\N}$ converge vaguely to $\mu$. As $K_2(\R) \subseteq \Cc(\R)$,  $(\mu_n)_{n\in\N}, (\nu_n)_{n\in\N}$ converge in the pointwise topology for semi-measures to $\mu$.
Now, if $\mu \in \cM^\infty(\R) \backslash \WAP(\R)$ it follows from Lemma~\ref{ms vs semi} that $\mu \notin \sWAP(\R)$.

This shows that $\sSAP(\R)$, $\sWAP(\R)$ and $\sWAP_0(\R)$ are not closed in the topology of pointwise convergence.
\end{example}

\begin{remark}
If $G$ is any  non-compact, $\sigma$-compact LCAG, and $(K_n)_{n\in\N}$ is any sequence of compact sets such that
\[
K_{n} \subseteq \mbox{Int}(K_{n+1}) \qquad \text{ and } \qquad
   G = \bigcup_n K_ n\,,
\]
then we have $\mu_n := \mu|_{K_n} \in \cM^\infty_0(G)$ and $\mu_n \to \mu$ pointwise, for all $\mu \in \cM(G)$.
Since $G$ is non-compact,  $\cM(G) \backslash \WAP(G)$ is non-empty. Exactly as above, $\mu_n \in \sWAP_0(G)$,
$(\mu_n)_{n\in\N}$ converges in the pointwise topology for semi-measures to $\mu$ but $\mu \notin \sWAP(G)$.
\end{remark}

We next show that these spaces are closed in a topology similar to the product topology for measures. Let us introduce the following definition first.
Note that each $f \in K_2(G)$ defines a semi-norm $p_f$ on $K_2(G)$ via
\[
\phi_f(\vartheta) = \| f*\vartheta \|_\infty \,.
\]

\begin{definition}
The locally convex topology defined on $\SM^\infty(G)$ by the family of semi-norms $\{ p_f : f \in K_2(G) \}$  is called the \emph{semi-product topology}.
We will denote this topology by $\tau_{\operatorname{sp}}$.
\end{definition}

In this topology, a net $(\vartheta_{\alpha})_{\alpha}$ converges to $\vartheta$ if and only if
\[
\lim_{\alpha} \| \vartheta_{\alpha}*f - \vartheta*f \|_\infty =0
\]
for all $f \in K_2(G)$.
Note that, similarly to \cite{ARMA}, the semi-product topology is simply the topology induced by the embedding
\[
\SM^\infty(G) \hookrightarrow [\Cu(G)]^{K_2(G)}\, \qquad \vartheta \mapsto (\vartheta*f )_{ f \in K_2(G)}\,.
\]

\begin{lemma}
\begin{itemize}
  \item[(a)] $(\SM^\infty(G), \tau_{\operatorname{sp}})$ is complete.
  \item[(b)] The spaces $\sSAP(G)$, $\sWAP(G)$ and $\sWAP_0(G)$ are closed in $(\SM^\infty(G), \tau_{\operatorname{sp}})$. In particular, $(\sSAP(G), \tau_{\operatorname{sp}})$, $(\sWAP(G), \tau_{\operatorname{sp}})$ and $(\sWAP_0(G), \tau_{\operatorname{sp}})$ are complete.
\end{itemize}
\end{lemma}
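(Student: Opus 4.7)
For part (a), my plan is to take a Cauchy net $(\vartheta_\alpha)_\alpha$ in the semi-product topology, produce a candidate limit, and verify it belongs to $\SM^\infty(G)$. The Cauchy condition in $\tau_{\operatorname{sp}}$ says that $(\vartheta_\alpha * f)_\alpha$ is Cauchy in the Banach space $(\Cu(G), \|\cdot\|_\infty)$ for every $f \in K_2(G)$, so there is a uniform limit $h_f \in \Cu(G)$. Evaluating at $t=0$ and using the identity $(\vartheta_\alpha * f)(0) = \vartheta_\alpha(f^\dagger)$, I obtain a pointwise limit $\vartheta(g) := \lim_\alpha \vartheta_\alpha(g)$ for every $g \in K_2(G)$, which is linear by construction and so defines a semi-measure. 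Then for all $t \in G$ and $f \in K_2(G)$,
\[
(\vartheta * f)(t) = \vartheta(T_t f^\dagger) = \lim_\alpha \vartheta_\alpha(T_t f^\dagger) = \lim_\alpha (\vartheta_\alpha * f)(t) = h_f(t),
\]
so $\vartheta * f = h_f \in \Cu(G)$. This yields $\vartheta \in \SM^\infty(G)$, and since $\vartheta_\alpha * f \to \vartheta * f$ uniformly for each $f$, we get $\vartheta_\alpha \to \vartheta$ in $\tau_{\operatorname{sp}}$.

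For part (b), my plan is to reduce everything to the well-known norm-closedness of $S\hspace*{-1pt}AP(G)$, $W\hspace*{-2pt}AP(G)$ and the null space $\{f\in W\hspace*{-2pt}AP(G) : M(|f|)=0\}$ inside $(\Cu(G), \|\cdot\|_\infty)$; for the last of these I would invoke the elementary estimate $|M(|f|)-M(|g|)| \leqslant M(|f-g|) \leqslant \|f-g\|_\infty$, which shows the defining functional is norm continuous. Given a net $(\vartheta_\alpha)_\alpha$ in one of $\sSAP(G)$, $\sWAP(G)$, $\sWAP_0(G)$ converging in $\tau_{\operatorname{sp}}$ to some $\vartheta \in \SM^\infty(G)$, I fix $f \in K_2(G)$ and observe that $\vartheta_\alpha * f \to \vartheta * f$ in $\|\cdot\|_\infty$, with every $\vartheta_\alpha * f$ lying in the relevant closed subspace of $\Cu(G)$. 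Norm closedness then forces $\vartheta * f$ into that same subspace for every $f$, placing $\vartheta$ in the corresponding semi-measure class. Completeness of each of these three spaces is immediate from (a), since closed subspaces of complete locally convex spaces are complete.

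The only genuine point to verify is the passage from the Cauchy hypothesis to a limit \emph{inside} $\SM^\infty(G)$ rather than merely a pointwise limit in $\SM(G)$, and this is handled cleanly by the identity $(\vartheta * f)(t) = \vartheta(T_t f^\dagger)$, which lets one identify $\vartheta * f$ with the uniform Cauchy limit $h_f$. The rest of both parts is routine: linearity of the pointwise limit, and invoking the standard norm closedness of the classical almost-periodicity classes.
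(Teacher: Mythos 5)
Your proof is correct and follows essentially the same route as the paper: for (a) you extract the uniform limits $h_f$, recover the pointwise limit semi-measure via evaluation at $0$ (using that $f\mapsto f^\dagger$ is an involution of $K_2(G)$), and identify $\vartheta*f$ with $h_f$ through $(\vartheta*f)(t)=\vartheta(T_tf^\dagger)$; for (b) you reduce to the norm-closedness of $S\hspace*{-1pt}AP(G)$, $W\hspace*{-2pt}AP(G)$ and the null class in $(\Cu(G),\|\cdot\|_\infty)$, exactly as the paper does via the estimate $M(|f|)\leqslant\|f\|_\infty$.
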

\begin{proof}
(a) Let $(\mu_\alpha)_\alpha$ be a Cauchy net in $\SM^\infty(G)$. Then, for all $f \in K_2(G)$ the net $(\mu_{\alpha}*f)_\alpha$ is Cauchy in $(\Cu(G), \| \cdot \|_\infty)$ and hence convergent to some $F_f \in \Cu(G)$.

Also note that
\[
\mu_\alpha(f)= (\mu_\alpha*f^\dagger)(0)
\]
for all $f \in K_2(G)$. Since $\mu_{\alpha}*f^\dagger$ is Cauchy in $(\Cu(G), \| \cdot \|_\infty)$, the net $(\mu_{\alpha}(f))_{\alpha}$ is Cauchy in $\C$. Therefore, by Proposition~\ref{sm complete}, there exists a semi-measure $\mu$ such that
\[
\mu(f) = \lim_{\alpha} \mu_\alpha(f) \qquad \text{ for all } f \in K_2(G) \,.
\]
Let $f \in K_2(G)$ and $t \in G$. Then,
\[
F_f(t)= \lim_{\alpha} (\mu_{\alpha}*f)(t)= \lim_{\alpha} \mu_{\alpha}(T_tf^\dagger) =\mu(T_t f)= (\mu*f)(t) \,.
\]
This shows that we have $\mu*f = F_f \in \Cu(G)$ for all $f \in K_2(G)$. Therefore, $\mu \in \SM^\infty(G)$ and
\[
\lim_\alpha \| \mu_\alpha*f -\mu*f \|_\infty = \lim_\alpha \| \mu_\alpha*f -F_f\|_\infty =0
\]
for all $f \in K_2(G)$.
This proves that $\mu_{\alpha}$ converges in $\tau_{sp}$ to $\mu \in \SM^\infty(G)$.

\medskip

\noindent (b) Since $WAP(G)$ and $SAP(G)$ are closed in $(\Cu(G), \| \cdot \|_\infty)$ \cite[Prop.~4.3.4]{MoSt}, it follows immediately that $\sSAP(G)$ and $\sWAP(G)$ are closed in $(\SM^\infty(G), \tau_{\operatorname{sp}})$. Moreover, the inequality $M(|f|) \leqslant \|f \|_\infty$ for all $f \in WAP(G)$ gives that $WAP_0(G)$ is closed in $(WAP(G), \| \cdot \|_\infty)$ which gives that $\sWAP_0(G)$ is closed in $(\SM^\infty(G), \tau_{\operatorname{sp}})$.
\end{proof}

We are now ready to prove the existence of the Eberlein decomposition for semi-measures.

\begin{theorem}\label{t2}
We have
\[
\sWAP(G)= \sSAP(G) \oplus \sWAP_0(G) \,.
\]
In particular, every semi-measure $\vartheta \in \sWAP(G)$ can be written uniquely in the form
\begin{equation}\label{ebe}
\vartheta=\vartheta_s+\vartheta_0 \,,
\end{equation}
with
\[
  (\vartheta_s)*f = (\vartheta*f)_s \qquad \text{ and } \qquad
  (\vartheta_0)*f = (\vartheta*f)_0
\]
for all $f \in K_2(G)$.
\end{theorem}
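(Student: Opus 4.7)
The plan is to lift the classical Eberlein decomposition for scalar weakly almost periodic functions $WAP(G) = SAP(G) \oplus WAP_0(G)$ from $\mathbb{C}$-valued functions to the semi-measure level, using the fact that for $\vartheta \in \sWAP(G)$ every convolution $\vartheta * f$ (with $f \in K_2(G)$) is a weakly almost periodic function and hence admits a classical Eberlein decomposition. The candidate components are defined by evaluation at $0$:
\[
\vartheta_s(f) := (\vartheta * f^{\dagger})_s(0), \qquad \vartheta_0(f) := (\vartheta * f^{\dagger})_0(0),
\]
which makes sense because $f \in K_2(G)$ implies $f^{\dagger} \in K_2(G)$, so $\vartheta * f^{\dagger} \in WAP(G)$ and may be decomposed.

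My first step is to verify that $\vartheta_s$ and $\vartheta_0$ are genuine semi-measures, i.e. linear functionals on $K_2(G)$. Linearity follows at once from the bilinearity of $*$ (Lemma~\ref{lem:popert_sm}(a)) combined with the linearity of the Eberlein projection $WAP(G) \to SAP(G)$ (which is well defined because the decomposition $WAP(G) = SAP(G) \oplus WAP_0(G)$ is a direct sum). Next, I would also observe that by definition
\[
\vartheta(f) = (\vartheta * f^{\dagger})(0) = (\vartheta * f^{\dagger})_s(0) + (\vartheta * f^{\dagger})_0(0) = \vartheta_s(f) + \vartheta_0(f),
\]
so at the semi-measure level $\vartheta = \vartheta_s + \vartheta_0$.

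The main computation is to show that the constructed components satisfy the advertised intertwining with $*$. For each $f \in K_2(G)$ and $t \in G$, one has $(T_t f^{\dagger})^{\dagger} = T_{-t} f$, so that
\[
(\vartheta_s * f)(t) = \vartheta_s(T_t f^{\dagger}) = \big(\vartheta * T_{-t} f\big)_s(0) = \big(T_{-t}(\vartheta * f)\big)_s(0) = T_{-t}\big((\vartheta * f)_s\big)(0) = (\vartheta * f)_s(t),
\]
where the second-to-last equality uses that the Eberlein projection $WAP(G) \to SAP(G)$ commutes with translations (both $SAP(G)$ and $WAP_0(G)$ are translation invariant subspaces of $WAP(G)$). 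The same argument with $(\,\cdot\,)_0$ in place of $(\,\cdot\,)_s$ yields $(\vartheta_0 * f) = (\vartheta * f)_0$. Since $(\vartheta*f)_s \in SAP(G)$ and $(\vartheta*f)_0 \in WAP_0(G)$ for every $f \in K_2(G)$, this shows $\vartheta_s \in \sSAP(G)$ and $\vartheta_0 \in \sWAP_0(G)$, establishing existence of the decomposition.

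For uniqueness, suppose $\eta \in \sSAP(G) \cap \sWAP_0(G)$. Then for every $f \in K_2(G)$ the function $\eta * f$ lies in $SAP(G) \cap WAP_0(G) = \{0\}$ (the classical uniqueness for functions). Hence $\eta(T_t f^\dagger) = (\eta * f)(t) = 0$ for all $t \in G$ and $f \in K_2(G)$; as $\{T_t f^\dagger : t \in G,\ f \in K_2(G)\} = K_2(G)$, we conclude $\eta = 0$, so $\sSAP(G) \cap \sWAP_0(G) = \{0\}$. The main (mild) obstacle in the argument is purely bookkeeping: one must be careful with $\dagger$ and the sign conventions in the convolution formula $(\vartheta * f)(t) = \vartheta(T_t f^\dagger)$ in order for the translation-equivariance of the Eberlein projection to match up cleanly with the definition of $\vartheta_s$ and $\vartheta_0$.
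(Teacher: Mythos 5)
Your proposal is correct and follows essentially the same route as the paper: define $\vartheta_s(f) := (\vartheta*f^{\dagger})_s(0)$ and $\vartheta_0(f) := (\vartheta*f^{\dagger})_0(0)$, then use $(T_tf^{\dagger})^{\dagger}=T_{-t}f$ together with the translation-equivariance of the classical Eberlein projection to get $(\vartheta_s*f)=(\vartheta*f)_s$ and $(\vartheta_0*f)=(\vartheta*f)_0$. Your explicit uniqueness argument via $\sSAP(G)\cap\sWAP_0(G)=\{0\}$ is a welcome addition that the paper leaves implicit.
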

\begin{proof}
It is easy to see that
\[
\vartheta_s(f) := (\vartheta*f^\dagger)_s(0)  \qquad \text{ and } \qquad
\vartheta_0(f) := (\vartheta*f^\dagger)_0(0)
\]
define linear functionals on $K_2(G)$, and hence semi-measures.
Moreover,  we have
\begin{align*}
(\vartheta_s*f)(0)&=\vartheta_s(T_tf^\dagger) = (\vartheta*(T_{-t}f))_s(0) =\left(T_{-t} (\vartheta*f)\right)_s(0)= (\vartheta*f)_s(t) \,, \\
(\vartheta_0*f)(0)&=\vartheta_0(T_tf^\dagger) = (\vartheta*(T_{-t}f))_0(0) =\left(T_{-t} (\vartheta*f)\right)_0(0) = (\vartheta*f)_0(t)\,,
\end{align*}
for all $t \in G$, which completes the proof.
\end{proof}

\begin{definition}
Let $\vartheta \in \sWAP(G)$. We will refer to the decomposition \eqref{ebe} as the \emph{Eberlein decomposition of} $\vartheta$.
\end{definition}

\begin{remark}
Let $\mu \in \WAP(G)$. Then, Lemma~\ref{ms vs semi} implies that the Eberlein decomposition of $\mu$ as measure and semi-measure coincide.
\end{remark}

Now we can prove the existence of the generalized Eberlein decomposition. We start with some preliminary results.
First, the following lemma is an immediate consequence of Proposition~\ref{prop:bijection_ftsm_sam}.

\begin{lemma}\label{lem proj} Let $T : \cM^w(G) \to \cM^w(G)$ be any function. Then, there exists some function $S: \SM_{T}(G) \to   \SM_{T}(G)$ such that
\[
\reallywidehat{S(\vartheta)} = T( \widehat{\vartheta}) \,.
\]
for all $\vartheta \in  \SM_{T}(G)$. \qed
\end{lemma}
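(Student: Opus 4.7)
The plan is to read this off directly from the bijection in Proposition~\ref{prop:bijection_ftsm_sam}, supplemented by the explicit inverse formula recorded in the Fact immediately preceding it. Since the Fourier transform $\widehat{\cdot}\colon \SM_{T}(G) \to \cM^{w}(\widehat{G})$ is a bijection, it has a well-defined inverse $\reallywidecheck{\cdot}\colon \cM^{w}(\widehat{G}) \to \SM_{T}(G)$, given explicitly by
\[
\reallywidecheck{\mu}(f) \;=\; \int_{\widehat{G}} \reallywidecheck{f}(\chi)\,\dd\mu(\chi) \qquad (f \in K_2(G)).
\]
(Here we read the hypothesis $T\colon \cM^{w}(G)\to\cM^{w}(G)$ with the natural convention that $T$ acts on the side where $\widehat{\vartheta}$ lives, namely on $\cM^{w}(\widehat{G})$; the statement and proof are formally identical in either case.)

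I would then define the desired map by the obvious composition
\[
S(\vartheta) \;:=\; \reallywidecheck{\,T(\widehat{\vartheta})\,} \qquad (\vartheta \in \SM_{T}(G)).
\]
This makes sense because, by Proposition~\ref{prop:bijection_ftsm_sam}, $\widehat{\vartheta}$ is a weakly admissible measure, hence lies in the domain of $T$; by assumption $T(\widehat{\vartheta})$ is again weakly admissible, hence lies in the domain of $\reallywidecheck{\cdot}$; and by the bijection the output is a Fourier transformable semi-measure, so $S(\vartheta)\in\SM_{T}(G)$.

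Finally, applying the Fourier transform to $S(\vartheta)$ and using that $\widehat{\cdot}$ and $\reallywidecheck{\cdot}$ are mutual inverses, I get
\[
\reallywidehat{S(\vartheta)} \;=\; \reallywidehat{\reallywidecheck{T(\widehat{\vartheta})}} \;=\; T(\widehat{\vartheta}),
\]
which is the required identity. There is no substantive obstacle in the argument; the only thing one does is unwind the bijection. The point of isolating this lemma is merely to package the observation that any operation on weakly admissible measures can be transported via Fourier duality to an operation on Fourier transformable semi-measures, which will be used in the sequel (for instance to obtain the individual components of the generalised Eberlein decomposition as semi-measures from the Lebesgue decomposition of $\widehat{\vartheta}$).
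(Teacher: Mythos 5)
Your proof is correct and is exactly the argument the paper intends: the lemma is stated as an immediate consequence of Proposition~\ref{prop:bijection_ftsm_sam}, and defining $S(\vartheta) := \reallywidecheck{T(\widehat{\vartheta})}$ via the inverse of the bijection is the whole content. Your parenthetical remark that the domain of $T$ should be read as $\cM^{w}(\widehat{G})$ (where $\widehat{\vartheta}$ actually lives) correctly identifies and resolves the only wrinkle in the statement.
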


\begin{corollary} \label{thm:main_decomp}
Let $\nu\in\mathcal{M}(\widehat{G})$ be the Fourier transform of a Fourier transformable semi-measure $\vartheta$. Let $\{\nu_j\, |\, 1\leqslant j \leqslant n\} \subseteq \mathcal{M}(\widehat{G})$ such that
\[
\nu = \sum_{j=1}^n \nu_j \qquad \text{ and } \qquad |\nu_j|\leqslant |\nu|\ \text{ for all } 1\leqslant j \leqslant n \,.
\]
Then, there are Fourier transformable semi-measures $\vartheta_1,\ldots,\vartheta_n$ such that
\[
\vartheta = \sum_{j=1}^n \vartheta_j \qquad \text{ and } \qquad
\widehat{\vartheta_j}=\nu_j\,,\ 1\leqslant j \leqslant n \,.
\]
\end{corollary}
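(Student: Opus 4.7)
The plan is to reduce everything to the bijection given by Proposition~\ref{prop:bijection_ftsm_sam} between Fourier transformable semi-measures on $G$ and weakly admissible measures on $\widehat{G}$. The whole statement is really a statement about weakly admissible measures: once we know each piece $\nu_j$ is weakly admissible, we can invert the Fourier transform on it to obtain the desired $\vartheta_j$.

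First I would note that, since $\vartheta$ is Fourier transformable, Proposition~\ref{prop:bijection_ftsm_sam} guarantees that $\nu=\widehat{\vartheta}$ is weakly admissible. Then, applying Lemma~\ref{lem:stronglyadmissible_prop}(b) to each $\nu_j$ (using the hypothesis $|\nu_j|\leqslant|\nu|$), I conclude that $\nu_j\in\cM^{w}(\widehat{G})$ for every $1\leqslant j\leqslant n$. By the surjectivity part of Proposition~\ref{prop:bijection_ftsm_sam}, there exist Fourier transformable semi-measures $\vartheta_1,\dots,\vartheta_n$ with $\widehat{\vartheta_j}=\nu_j$.

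It remains to check that $\vartheta=\sum_{j=1}^n\vartheta_j$. Here I would proceed by the injectivity part of the bijection: since the sum of Fourier transformable semi-measures is again Fourier transformable (for each $f\in \Cc(G)$ we have $|\reallywidecheck{f}|^2\in L^1(|\nu|)\subseteq L^1(|\nu_j|)$, so $\sum_j\vartheta_j(f*\widetilde{f})=\sum_j\nu_j(|\reallywidecheck{f}|^2)=\nu(|\reallywidecheck{f}|^2)=\vartheta(f*\widetilde{f})$), the semi-measure $\sum_{j=1}^n\vartheta_j$ is Fourier transformable with Fourier transform $\sum_{j=1}^n\nu_j=\nu=\widehat{\vartheta}$. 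The injectivity of the Fourier transform on $\SM_T(G)$ (again Proposition~\ref{prop:bijection_ftsm_sam}) then yields $\sum_{j=1}^n\vartheta_j=\vartheta$, completing the proof.

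There is essentially no obstacle here: all the content is already contained in Proposition~\ref{prop:bijection_ftsm_sam} and Lemma~\ref{lem:stronglyadmissible_prop}(b). The only (mild) point of care is verifying the additivity of the Fourier transform on semi-measures with the required integrability, which is handled automatically by the domination $|\nu_j|\leqslant|\nu|$.
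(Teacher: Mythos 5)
Your proof is correct and follows essentially the same route as the paper: weak admissibility of $\nu$ via Proposition~\ref{prop:bijection_ftsm_sam}, weak admissibility of each $\nu_j$ via Lemma~\ref{lem:stronglyadmissible_prop}(b), and then the bijection to produce the $\vartheta_j$ and identify the sum. The paper states this as a one-line consequence of the same ingredients; your additional verification that $\sum_j\vartheta_j$ has Fourier transform $\nu$ is a correct (and welcome) filling-in of the detail.
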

\begin{proof}
Since $|\nu_j|\leqslant |\nu|$ for all $1\leqslant j\leqslant n$, this is a direct consequence of Lemma~\ref{lem:wa_sa_prop}(b), Lemma~\ref{lem:stronglyadmissible_prop}(b) and Proposition~\ref{prop:bijection_ftsm_sam}.
\end{proof}

\begin{remark}
Let $U \subseteq G$ be a fixed open pre-compact set. Under the setting of Corollary~\ref{cor 1} we have
\begin{itemize}
  \item[(a)] $\gamma$ is translation bounded and hence $\widehat{\gamma}$ is $U$-nice.
  \item[(b)] $\gamma_{\operatorname{s}}, \gamma_{\operatorname{0a}},\gamma_{\operatorname{0s}}$ are translation bounded measures if and only if $(\widehat{\gamma})_{\operatorname{pp}}, (\widehat{\gamma})_{\operatorname{ac}}, (\widehat{\gamma})_{\operatorname{sc}}$ are $U$-nice.
\end{itemize}
\end{remark}

Next, we want to know if the semi-measures can be measures. The following example shows that we cannot expect this to hold in general.

\begin{example}
Consider $G=\R$. The Lebesgue measure $\nu=\lm$ is the Fourier transform of the (semi-)measure $\vartheta=\delta_0$. Now, if we decompose $\nu$ into
\[
\nu= 1_{\{x\in\R\, |\, x\geqslant 0\}} \lm + 1_{\{x\in\R\, |\, x< 0\}} \lm =: \nu_1+\nu_2 \,,
\]
we can apply Theorem~\ref{thm:main_decomp} and obtain Fourier transformable semi-measures $\vartheta_1$ and $\vartheta_2$ such that
\[
\delta_0=\vartheta_1 + \vartheta_2 \qquad \text{ and } \qquad \widehat{\vartheta_1}= 1_{\{x\in\R\, |\, x\geqslant 0\}} \lm \,,\ \ \ \widehat{\vartheta_2}= 1_{\{x\in\R\, |\, x< 0\}} \lm \,.
\]
However, $\vartheta_1$ and $\vartheta_2$ are not measures, since $\nu_1$ and $\nu_2$ are not weakly almost periodic (whence they can't be Fourier transforms of measures \cite[Thm.~4.11.12]{MoSt}).
\end{example}

Now, we are ready to prove the following result.

\begin{theorem}\label{genEbe}
Let $\vartheta \in \SM_{T}(G)$. Then,
\begin{itemize}
\item[(a)] $\vartheta \in \sWAP(G)$.
\item[(b)]$\vartheta_{\operatorname{s}}, \vartheta_0 \in \SM_{T}(G)$ and
\[
\widehat{\vartheta_{\operatorname{s}}}= (\widehat{\vartheta})_{\operatorname{pp}} \qquad \mbox{ and } \qquad \widehat{\vartheta_0}= (\widehat{\vartheta})_{\operatorname{c}}
\]
\item[(c)] For all $\chi \in \widehat{G}$, we have
\[
\widehat{\vartheta}(\{\chi\}) = a_{\chi}(\vartheta)=a_{\chi}(\vartheta_{\operatorname{s}}) \,.
\]
\item[(d)] There exist semi-measures $\vartheta_{\operatorname{0a}},\vartheta_{\operatorname{0s}} \in \SM_{T}$ such that $\vartheta_0=\vartheta_{\operatorname{0a}}+\vartheta_{\operatorname{0s}}$ and
\[
\widehat{\vartheta_{\operatorname{0s}}}= (\widehat{\vartheta})_{\operatorname{sc}} \qquad \mbox{ and } \qquad \widehat{\vartheta_{\operatorname{0a}}}= (\widehat{\vartheta})_{\operatorname{ac}} \,.
\]
\end{itemize}
\end{theorem}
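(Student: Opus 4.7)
The plan is to establish the four parts in sequence, exploiting the bijection between Fourier transformable semi-measures and weakly admissible measures (Proposition~\ref{prop:bijection_ftsm_sam}) together with the splitting Corollary~\ref{thm:main_decomp} applied to the Lebesgue decomposition of $\widehat{\vartheta}$.

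For part (a), I will use Lemma~\ref{lem:properties_sm_1}(b) to write $\vartheta * f = \reallywidecheck{\widehat{f}\,\widehat{\vartheta}}$ for each $f \in K_2(G)$. Since $\widehat{f} = \reallywidecheck{f^\dagger}$ with $f^\dagger \in K_2(G)$, Lemma~\ref{lem:properties_sm_1}(a) (which rests on weak admissibility of $\widehat{\vartheta}$) gives $\widehat{f} \in L^1(|\widehat{\vartheta}|)$. Hence $\widehat{f}\,\widehat{\vartheta}$ is a finite measure on $\widehat{G}$, and its inverse Fourier transform lies in the Fourier--Stieltjes algebra $B(G)$, which is contained in $\WAP(G)$. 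This shows $\vartheta * f \in \WAP(G)$ for all $f \in K_2(G)$, i.e.\ $\vartheta \in \sWAP(G)$.

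For part (b), I will apply Corollary~\ref{thm:main_decomp} to $\widehat{\vartheta} = (\widehat{\vartheta})_{\operatorname{pp}} + (\widehat{\vartheta})_{\operatorname{c}}$, noting that both summands are dominated by $|\widehat{\vartheta}|$ and are weakly admissible by Lemma~\ref{lem:stronglyadmissible_prop}(c). This yields Fourier transformable semi-measures $\vartheta_1,\vartheta_2$ with $\vartheta=\vartheta_1+\vartheta_2$ and the prescribed Fourier transforms. To identify $\vartheta_1=\vartheta_{\operatorname{s}}$ and $\vartheta_2=\vartheta_0$, I will argue directly that $\vartheta_1 \in \sSAP(G)$ and $\vartheta_2\in \sWAP_0(G)$: for each $f \in K_2(G)$, the finite measure $\widehat{f}(\widehat{\vartheta})_{\operatorname{pp}}$ is pure point, so its inverse Fourier transform is a Bohr-almost periodic function in $SAP(G)$, while $\widehat{f}(\widehat{\vartheta})_{\operatorname{c}}$ is a finite continuous measure, so by Wiener's lemma its inverse Fourier transform has vanishing mean absolute square, placing it in $\WAP_0(G)$. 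Uniqueness of the Eberlein decomposition (Theorem~\ref{t2}) then identifies the two pairs. Part (d) will follow in exactly the same way by applying Corollary~\ref{thm:main_decomp} once more to $(\widehat{\vartheta})_{\operatorname{c}}=(\widehat{\vartheta})_{\operatorname{ac}}+(\widehat{\vartheta})_{\operatorname{sc}}$.

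For part (c), I will compute $a_\chi(\vartheta * f)$ in two ways. On the one hand, $\vartheta$ is intertwining by Corollary~\ref{cor FT implies tb and inter}, so Corollary~\ref{cor 2} gives $a_\chi(\vartheta*f)=a_\chi(\vartheta)\,\widehat{f}(\chi)$. On the other hand, using $\vartheta*f=\reallywidecheck{\widehat{f}\,\widehat{\vartheta}}$ and the standard identity $a_\chi(\reallywidecheck{\mu})=\mu(\{\chi\})$ for a finite measure $\mu$ on $\widehat{G}$ (a direct Fubini computation against $\overline{\chi}$ averaged over a van Hove sequence, using dominated convergence), I obtain $a_\chi(\vartheta*f)=\widehat{f}(\chi)\,\widehat{\vartheta}(\{\chi\})$. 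Choosing $f$ with $\widehat{f}(\chi)\neq 0$ yields $a_\chi(\vartheta)=\widehat{\vartheta}(\{\chi\})$; the equality $a_\chi(\vartheta_{\operatorname{s}})=a_\chi(\vartheta)$ then follows from $a_\chi(\vartheta_0)=0$, which holds since $\vartheta_0\in \sWAP_0(G)$. The main obstacle in the whole argument is the identification step in part (b): the decomposition itself is abstract nonsense via the bijection with weakly admissible measures, but verifying that the two summand semi-measures actually lie in $\sSAP(G)$ and $\sWAP_0(G)$ requires the explicit Wiener-type computation of the inverse Fourier transform of the finite measures $\widehat{f}(\widehat{\vartheta})_{\operatorname{pp}}$ and $\widehat{f}(\widehat{\vartheta})_{\operatorname{c}}$, which is exactly where the integrability consequences of weak admissibility get used decisively.
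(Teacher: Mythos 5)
Your proposal is correct and follows essentially the same route as the paper: part (a) via $\vartheta*f=\reallywidecheck{\widehat{f}\widehat{\vartheta}}\in B(G)\subseteq WAP(G)$, part (b) by matching the Eberlein decomposition of $\vartheta*f$ with the pure point/continuous split of the finite measure $\widehat{f}\widehat{\vartheta}$ (the Wiener-lemma computation you spell out is exactly what the paper delegates to the cited result on finite measures), part (c) by computing $a_\chi(\vartheta*f)$ in two ways using intertwining, and part (d) by lifting the Lebesgue projections through the bijection with weakly admissible measures. The only difference is expository: you re-derive the finite-measure facts that the paper cites.
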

\begin{proof}
(a) This follows from Lemma~\ref{lem:properties_sm_1} and \cite[Lemma~4.8.6]{MoSt}.

\medskip

\noindent (b) This follows from Theorem~\ref{t2} and \cite[Thm.~4.8.11.]{MoSt}.

\medskip

\noindent (c) Let $f \in K_2(G)$ be such that $\widehat{f}(\chi)=1$. Let $\mu:=\widehat{\vartheta}$.
 Then, Lemma~\ref{lem:properties_sm_1} gives $\reallywidehat{ \reallywidecheck{f} \mu}(t)=(\vartheta*f)(-t)$. Thus, by applying \cite[Lem.~4.8.7]{MoSt} to the finite measure $\reallywidecheck{f} \mu$, we get
\[
\reallywidecheck{f}(\chi) \widehat{\vartheta}(\{\chi\}) = \reallywidecheck{f}(\chi)\, \mu(\{\chi\})= a_{\chi}(\vartheta*f) \,.
\]
Finally, since $\vartheta$ is Fourier transformable, it is intertwining by  Corollary~\ref{cor FT implies tb and inter}, and hence, Proposition~\ref{prop FB coeff} implies
\[
a_{\chi}(\vartheta*f) =\widehat{f}(\chi)\, a_{\chi}(\vartheta) \,.
\]
This shows that
\[
\widehat{\vartheta}(\{\chi\}) = a_{\chi}(\vartheta)
\]
for all Fourier transformable semi-measures $\vartheta$ and all $\chi \in \widehat{G}$
and, by (b),
\[
\widehat{\vartheta}(\{\chi\}) =(\widehat{\vartheta})_{\text{pp}}(\{\chi\}) =a_{\chi}(\vartheta_{\text{s}}) \,,
\]
which proves (c).

\medskip

\noindent (d) By Lemma~\ref{lem:stronglyadmissible_prop} (c), the projections $T_{\text{ac}}(\mu)=\mu_{\text{ac}}$ and $T_{\text{sc}}(\mu)=\mu_{\text{sc}}$ take $\cM^{w}(G)$ into $\cM^{w}(G)$. The claim follows now from Lemma~\ref{lem proj}.
\end{proof}

Let us introduce the following notation:
\begin{align*}
  \sWAP_{0a,T}(G) &= \{ \vartheta \in \SM_{T}(G)\, :\, \widehat{\vartheta} \mbox{ is absolutelly continuous} \} \,, \\
    \sWAP_{0s,T}(G) &= \{ \vartheta \in \SM_{T}(G)\, :\, \widehat{\vartheta} \mbox{ is singular continuous} \} \,,
\end{align*}
Theorem~\ref{genEbe} tells us that we have the decomposition
\[
\SM_{T}(G)= \left( \SM_{T}(G) \cap \sSAP(G) \right) \oplus \underbrace{\sWAP_{0a,T}(G) \oplus   \sWAP_{0s,T}(G) }_{=\sWAP_0(G) \cap \SM_{T}(G)} \,.
\]
Moreover, the Fourier transform induces the following bijections:

\begin{tikzpicture}
  \matrix (m) [matrix of math nodes,row sep=2em,column sep=.4em,minimum width=2em]
  { \SM_{T}(G) &= \left( \SM_{T}(G) \cap \sSAP(G) \right) &\oplus\sWAP_{0s,T}(G) & \oplus \sWAP_{0a,T}(G) \\
    \cM^w(G)&=\cM^w_{pp}(G)&\oplus  \cM^w_{sc}(G)&\oplus \cM^w_{ac}(G) \\};
  \path[<->]
    (m-1-1) edge node [left] {$\mathcal{F}$} (m-2-1)
    (m-1-2) edge node [left] {$\mathcal{F}$} (m-2-2)
    (m-1-3) edge node [left] {$\mathcal{F}$} (m-2-3)
    (m-1-4) edge node [left] {$\mathcal{F}$} (m-2-4);
\end{tikzpicture}

\section{On the components of the generalized Eberlein decomposition}\label{sect comp}

In this section, we review the properties of the components of the generalized Eberlein decomposition. Our approach here is similar to \cite{SS4}.
Let us start with the following simple result.

\begin{lemma} Let $\vartheta \in \sWAP(G)$ and let $(A_n)_{n\in\N}$ be a van Hove sequence. Let $ \vartheta_1, \vartheta_2 \in \SM(G)$ be such that
\[
\vartheta=\vartheta_1+\vartheta_2 \,.
\]
Then $\vartheta_1=\vartheta_{\operatorname{s}}$ and $\vartheta_2= \vartheta_0$ if and only if
\begin{itemize}
\item[(i)] $\vartheta_1 *f \in SAP(G)$,
\item[(ii)] $\lim_{n\to\infty} \frac{1}{A_n} \int_{A_n} | (\vartheta_2*f)(t)|\ \dd t =0$,
\end{itemize}
for all $f \in K_2(G)$.
\end{lemma}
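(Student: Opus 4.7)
The plan is to reduce the statement to the uniqueness of the Eberlein decomposition of weakly almost periodic functions, applied slicewise through convolution.

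For the forward direction, assume $\vartheta_1=\vartheta_{\operatorname{s}}$ and $\vartheta_2=\vartheta_0$. By Theorem~\ref{t2}, for every $f\in K_2(G)$ we have $\vartheta_1*f=(\vartheta*f)_s\in SAP(G)$, giving (i), and $\vartheta_2*f=(\vartheta*f)_0\in WAP_0(G)$. The latter means exactly that $M(|\vartheta_2*f|)=0$, and since the mean exists uniformly along any van Hove sequence (by \cite[Prop.~4.5.9]{MoSt}), this is precisely (ii).

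For the backward direction, fix $f\in K_2(G)$. Since $\vartheta\in\sWAP(G)$, we have $\vartheta*f\in WAP(G)$, and condition (i) gives $\vartheta_1*f\in SAP(G)\subseteq WAP(G)$. Consequently $\vartheta_2*f=\vartheta*f-\vartheta_1*f\in WAP(G)$. Condition (ii) asserts $M(|\vartheta_2*f|)=0$, i.e. $\vartheta_2*f\in WAP_0(G)$. Thus $\vartheta*f=\vartheta_1*f+\vartheta_2*f$ is a decomposition into a strongly almost periodic and a null weakly almost periodic function. By the uniqueness of the Eberlein decomposition $WAP(G)=SAP(G)\oplus WAP_0(G)$, we obtain
\[
\vartheta_1*f=(\vartheta*f)_s=\vartheta_{\operatorname{s}}*f \qquad\text{and}\qquad \vartheta_2*f=(\vartheta*f)_0=\vartheta_0*f,
\]
where the second equalities use Theorem~\ref{t2}.

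Finally, to pass from equality of the convolutions back to equality of the semi-measures, I use the definition \eqref{semi conv}: evaluating at $t=0$ yields $\vartheta_1(f^\dagger)=\vartheta_{\operatorname{s}}(f^\dagger)$ and $\vartheta_2(f^\dagger)=\vartheta_0(f^\dagger)$ for every $f\in K_2(G)$. Since $f\mapsto f^\dagger$ is a bijection of $K_2(G)$, this gives $\vartheta_1=\vartheta_{\operatorname{s}}$ and $\vartheta_2=\vartheta_0$ as semi-measures, as required.

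There is no genuine obstacle here; the only subtlety is the last step, where one must remember that a semi-measure is already determined by its values on $K_2(G)$, so that equality of the associated convolution functions at every translate immediately yields equality at the level of semi-measures.
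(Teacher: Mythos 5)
Your proof is correct and follows essentially the same route as the paper: both directions reduce to the function-level Eberlein decomposition $WAP(G)=SAP(G)\oplus WAP_0(G)$ applied to $\vartheta*f$ for each $f\in K_2(G)$, using that $\vartheta_2*f\in WAP(G)$ (so that the averaged limit in (ii) is the mean $M(|\vartheta_2*f|)$, independent of the van Hove sequence) and then the uniqueness of the decomposition together with Theorem~\ref{t2}. Your final step, evaluating the convolutions at $t=0$ against $f^\dagger$ to recover equality of semi-measures, just makes explicit what the paper compresses into ``since $f$ was arbitrary, the claim follows.''
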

 \begin{proof}
$\Longrightarrow$ Let $f\in K_2(G)$. Note that $f*\vartheta_1 = f *\vartheta_{\text{s}} \in SAP(G)$ and $f*\vartheta_2=f*\vartheta_0\in WAP_0(G)$. But this also implies (ii).

\medskip

\noindent $\Longleftarrow$ Let $f\in K_2(G)$. By (i), $f*\vartheta_1\in SAP(G)$, and hence $f*\vartheta_2\in WAP(G)$. Together with (ii), this gives $f*\vartheta_2\in WAP_0(G)$. Therefore, Theorem~\ref{t2} implies
\[
f*\vartheta_1=(f*\vartheta)_{\text{s}} = f*(\vartheta_{\text{s}}) \qquad \text{ and } \qquad f*\vartheta_2=(f*\vartheta)_0 = f*(\vartheta_0) \,.
\]
Since $f\in K_2(G)$ was arbitrary, the claim follows.
\end{proof}

Next, we prove a Riemann--Lebesgue type lemma for semi-measures.

\begin{lemma}
Let $\vartheta \in \sWAP_{\operatorname{0a,T}}(G)$. Then, $\vartheta*f \in \Cz(G)$ for all $f \in K_2(G)$.
\end{lemma}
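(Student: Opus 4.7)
The argument should be short once we unpack the definition. Since $\vartheta \in \sWAP_{0a,T}(G)$, the semi-measure $\vartheta$ is Fourier transformable, and $\widehat{\vartheta}$ is an absolutely continuous measure on $\widehat{G}$. In particular there exists a locally integrable function $g : \widehat{G} \to \C$ with
\[
\widehat{\vartheta} = g\, \theta_{\widehat{G}} \,.
\]
The plan is to express $\vartheta * f$ as the inverse Fourier transform of an $L^1$-function on $\widehat{G}$, and then invoke the Riemann--Lebesgue lemma.

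The key ingredient is Lemma~\ref{lem:properties_sm_1}(b), which gives, for every $f \in K_2(G)$,
\[
(\vartheta * f)(t) = \reallywidecheck{\widehat{f}\,\widehat{\vartheta}}(t) \,.
\]
To use this, I first need to check that $\widehat{f}\,\widehat{\vartheta}$ is a \emph{finite} measure. This is exactly the content of Lemma~\ref{lem semi FT}(a): since $\vartheta$ is Fourier transformable, the measure $\widehat{f}\,\widehat{\vartheta}$ is finite for every $f \in K_2(G)$. Combining this with the absolute continuity of $\widehat{\vartheta}$ yields
\[
\widehat{f}\,\widehat{\vartheta} = \widehat{f}\, g\, \theta_{\widehat{G}} \qquad \text{with } \widehat{f}\, g \in L^1(\widehat{G}) \,.
\]

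Consequently,
\[
(\vartheta * f)(t) = \reallywidecheck{\widehat{f}\, g}(t) = \int_{\widehat{G}} \chi(t)\, \widehat{f}(\chi)\, g(\chi) \ \dd\chi \,,
\]
which is the inverse Fourier transform of an $L^1$-function on $\widehat{G}$. The classical Riemann--Lebesgue lemma for $L^1(\widehat{G})$ (see e.g.\ \cite{rud}) then gives $\reallywidecheck{\widehat{f}\, g} \in \Cz(G)$, completing the proof.

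\textbf{Main obstacle.} There is no real obstacle: the only subtle point is that the formula in Lemma~\ref{lem:properties_sm_1}(b) requires $\widehat{f}\,\widehat{\vartheta}$ to be a finite measure (so that its inverse Fourier transform as a measure agrees with the inverse Fourier transform of its $L^1$-density), which is guaranteed by Lemma~\ref{lem semi FT}(a) once we know $\vartheta$ is Fourier transformable. After that, the conclusion is an immediate application of Riemann--Lebesgue.
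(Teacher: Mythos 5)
Your proof is correct and follows essentially the same route as the paper: write $(\vartheta*f) = \reallywidecheck{\widehat{f}\,g}$ via Lemma~\ref{lem:properties_sm_1}(b), verify $\widehat{f}\,g \in L^1(\widehat{G})$, and conclude by the classical Riemann--Lebesgue lemma. The only cosmetic difference is that you cite Lemma~\ref{lem semi FT}(a) for the finiteness of $\widehat{f}\,\widehat{\vartheta}$, whereas the paper gets the same integrability directly from Lemma~\ref{lem:properties_sm_1}; both are valid.
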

\begin{proof}
Let $f\in K_2(G)$. Since $\vartheta \in \sWAP_{\operatorname{0a,T}}(G)$, there is a function $h\in L_{\text{loc}}^1(\widehat{G})$ such that $\widehat{\vartheta}=h\theta_{\widehat{G}}$. By, Lemma~\ref{lem:properties_sm_1}, we have
\begin{equation}  \label{eq:01}
(\vartheta*f)(t)=\reallywidecheck{\widehat{f}h}\, (t) \,.
\end{equation}
Note that
\[
\int_{\widehat{G}} |\widehat{f}(\chi)|\, |h(\chi)|\ \dd \chi
    = \int_{\widehat{G}} |\widehat{f}(\chi)|\ \dd |\widehat{\vartheta}|(\chi)
    < \infty \,,
\]
again by Lemma~\ref{lem:properties_sm_1}. This shows that for all $f \in K_2(G)$ we have $\widehat{f}\, h \in L^1(\widehat{G})$. Consequently, $\vartheta*f\in\Cz(G)$ by Eq.~\eqref{eq:01} and the standard version of the Riemann--Lebesgue lemma.
\end{proof}

Next, we can prove the following result, compare \cite{SS4}.

 \begin{proposition}
Let $\mu\in\mathcal{M}_T(G)$. Then, the following statements are equivalent:
\begin{enumerate}
\item[(i)] $\reallywidecheck{g}*\mu\in L^2(G)$, for all $g\in KL(\widehat{G})$.
\item[(ii)] $g\widehat{\mu}$ is absolutely continuous with $L^2(\widehat{G})$-density, for all $g\in KL(\widehat{G})$.
\item[(iii)] $g\widehat{\mu}$ is absolutely continuous with $L^2(\widehat{G})$-density, for all $g\in KL(\widehat{G})$.
\item[(iv)] $\widehat{\mu}$ is absolutely continuous with $L^2_{loc}(\widehat{G})$-density.
\end{enumerate}
In particular, we have $\mu \in \sWAP_{\operatorname{0a,T}}(G)$.
\end{proposition}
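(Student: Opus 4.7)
The strategy is to close a short cycle (i)$\Leftrightarrow$(ii)$\Leftrightarrow$(iv), noting that (ii) and (iii) coincide as stated, and then to deduce the ``in particular'' conclusion at the very end. Throughout, the crucial preliminary observation is that since $\mu$ is Fourier transformable, $\widehat{\mu}$ is weakly admissible (Lemma~\ref{lem:wa_sa_prop}(b)) and hence translation bounded (Lemma~\ref{lem:wa_sa_prop}(a)); in particular, for every $g \in \Cc(\widehat{G})$, the product $g\widehat{\mu}$ is a finite measure on $\widehat{G}$.

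For (i)$\iff$(ii), the plan is to establish the identity
\[
\reallywidecheck{g}*\mu \;=\; \reallywidecheck{g\widehat{\mu}}
\]
for every $g\in KL(\widehat{G})$. This is the natural extension of Lemma~\ref{lem:properties_sm_1}(b) from $K_2(G)$ to $KL$-type functions, and I would derive it either by approximating $g$ by elements of $K_2(\widehat{G})$ or directly via Fubini, using that $\reallywidecheck{g}\in L^1(G)$ (by the definition of $KL$) and that $g\widehat{\mu}$ is a finite measure. Once this identity is in place, Plancherel's theorem gives
\[
\|\reallywidecheck{g\widehat{\mu}}\|_{L^2(G)} \;=\; \|g\widehat{\mu}\|_{L^2(\widehat{G})}
\]
in the sense that either both sides are finite (and equal) or both are infinite. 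Thus $\reallywidecheck{g}*\mu\in L^2(G)$ iff $g\widehat{\mu}$ is absolutely continuous with an $L^2(\widehat{G})$-density, which is exactly (i)$\iff$(ii). Since (iii) is literally identical to (ii), the equivalence (ii)$\iff$(iii) is trivial.

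For (ii)$\iff$(iv), I would proceed by localization. Given (ii), pick any compact $K\subseteq\widehat{G}$ and, using \cite[Cor.~4.9.12]{MoSt} or \cite[Prop.~2.4]{BF}, choose some $g\in KL(\widehat{G})$ with $g\equiv 1$ on $K$. Then $\mathbf 1_K\widehat{\mu}=\mathbf 1_K\cdot g\widehat{\mu}=\mathbf 1_K h\,\theta_{\widehat{G}}$ for an $L^2$ function $h$, giving that $\widehat{\mu}$ is absolutely continuous with an $L^2_{\mathrm{loc}}$-density. Conversely, if $\widehat{\mu}=h\,\theta_{\widehat{G}}$ with $h\in L^2_{\mathrm{loc}}(\widehat{G})$, then for any $g\in KL(\widehat{G})\subseteq \Cc(\widehat{G})$, the product $gh$ is supported in the compact set $\supp(g)$, is dominated there by $\|g\|_\infty |h|$, and so lies in $L^2(\widehat{G})$; hence $g\widehat{\mu}=gh\,\theta_{\widehat{G}}$ has the required form.

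The main obstacle, and where care is genuinely required, is justifying the convolution identity $\reallywidecheck{g}*\mu=\reallywidecheck{g\widehat{\mu}}$ for $g\in KL(\widehat{G})$: $\mu$ itself need not be translation bounded, so $\reallywidecheck{g}*\mu$ has to be interpreted via the Fourier-transformability relation, not as a naive convolution of a measure with an $L^1$ function. Once past this point, everything else is Plancherel and standard localization. Finally, the ``in particular'' conclusion is immediate from (iv): $\mu$ is Fourier transformable (as a measure and therefore, by Lemma~\ref{lem FT measure and semi measure}, as a semi-measure), and (iv) states that $\widehat{\mu}$ is absolutely continuous, so by the very definition of $\sWAP_{0a,T}(G)$ we have $\mu\in\sWAP_{0a,T}(G)$.
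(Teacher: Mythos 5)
Your proposal is correct and follows essentially the same route as the paper: the key identity $\reallywidecheck{g}*\mu=\reallywidecheck{g\widehat{\mu}}$ followed by Plancherel for (i)$\iff$(ii), the observation that (iii) is a restatement, and the same localization via a function equal to $1$ on a given compact set for the equivalence with (iv). The paper establishes the key identity by testing both measures against $\phi\in KL(G)$ (using Fourier transformability of $\mu$ and that $g\,\reallywidecheck{\phi}\in KL(\widehat{G})$ by Young's inequality) and then extending to all of $\Cc(G)$ by dominated convergence, which is precisely the rigorous, duality-based version of the step you flag as the main obstacle.
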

\begin{proof}
First note that $g\widehat{\mu}$ is a finite measure for all $g\in KL(\widehat{G})$ \cite{ARMA1}. Therefore, it is twice Fourier transformable.

\medskip

\noindent (i)$\iff$(ii): We have
\begin{equation} \label{eq:2}
\left\langle \reallywidecheck{g\widehat{\mu}}\, , \, \phi\right\rangle
    = \left\langle g\widehat{\mu}\, , \, \reallywidecheck{\phi}\right\rangle
     = \left\langle \widehat{\mu}\, , \, g\,\reallywidecheck{\phi}\right\rangle
     = \left\langle\mu\, ,\,\reallywidehat{g\reallywidecheck{\phi}}\right\rangle
    =\left\langle \mu\, , \, \widehat{g}*\phi\right\rangle
     = \left\langle (\reallywidecheck{g}*\mu)\theta_G\, , \, \phi\right\rangle
\end{equation}
for all $\phi\in KL(G)$ (note here that $g\,\reallywidecheck{\phi}\in KL(\widehat{G})$ because
\[
\|\reallywidehat{g\reallywidecheck{\phi}}\|_{L^1} = \|\reallywidecheck{g} * \phi\|_{L^1} \leqslant \|\reallywidecheck{g}\|_{L^1}\|\phi\|_{L^1} < \infty
\]
by Young's inequality). Now, \eqref{eq:2} and the dominated convergence theorem imply
\[
\left\langle \reallywidecheck{g\widehat{\mu}}\, , \, f\right\rangle = \left\langle (\reallywidecheck{g}*\mu)\theta_G\, , \, f\right\rangle
\]
for all $f\in\Cc(G)$. The claim now follows.

\medskip

\noindent (ii)$\implies$(iii): follows from $K_2(G) \subseteq KL(G)$.

\medskip

\noindent (iii)$\implies$(iv): Let $K \subseteq G$ be any compact set. Pick some $g\in K_2(\widehat{G})$ with $g\equiv 1$ on $K$. Then, by (ii), there is a function $h_{K} \in L^2(\widehat{G})$ such that $g \widehat{\mu}= h_K \theta_{\widehat{G}}$. In particular, restricting all these measures to $K$ we get
\begin{equation}\label{eq4}
\widehat{\mu}|_{K} = (g\widehat{\mu})|_{K}=(h_K \theta_{\widehat{G}})|_{K}=(h_K)|_{K} \theta_{\widehat{G}} \,.
\end{equation}
This shows that the restriction of $\widehat{\mu}$ to each compact set $K$ is absolutely continuous measure, and hence $\widehat{\mu}$ is an absolutely continuous measure. Let $f$ be its density function. Then \eqref{eq4} gives that $f|_K \theta_{\widehat{G}}=(h_K)|_{K} \theta_{\widehat{G}} $ and hence $f(x)=h_{K}(x)$ for almost all $x \in K$.

Since the restriction of $h_K$ to $K$ is an $L^2$ function and the restrictions of $f$ and $h_K$ to $K$ coincide almost everywhere, we get  $f|_{K} \in L^2(\widehat{G})$.
This gives $f \in L^2_{\text{loc}}(\widehat{G})$ as claimed.

\medskip

\noindent (iv)$\implies$(ii): Let $\widehat{\mu}=h\,\theta_{\widehat{G}}$ with $h\in L^2_{loc}(\widehat{G})$. Then, for all $g \in KL(G)$ the measure $g\widehat{\mu}$ is finite, and  absolutely continuous with density function $gh$. Finally, $gh\in L^2(\widehat{G})$ because
\[
\int_{\widehat{G}} \left| g(\chi)\, h(\chi)\right|^2\,
\dd\theta_{\widehat{G}}(\chi)
    \leqslant \|g\|_{\infty}^2 \int_{\operatorname{supp}(g)}  |h(\chi)|^2\,
      \dd\theta_{\widehat{G}}(\chi) < \infty \,.
\]
\end{proof}

\section{Fourier transform of measures}

According to Proposition~\ref{prop:E}, a weakly admissible measure $\nu\in\mathcal{M}(\widehat{G})$ is the Fourier transform of a measure $m\in\mathcal{M}(G)$ if and only if the corresponding semi-measure $\vartheta_{\nu}$ is continuous. The purpose of this section is to give another characterisation. In order to do so, we need to introduce a special class of sequences of functions, namely positive definite approximate identities.

The next lemma shows that we can pick the approximate identity to be in $K_2(G)\cap P(G)$.

\begin{lemma}\label{L1}
Let $U \subseteq G$ be any open pre-compact neighborhood of $0$. Then, there exists a sequence $(K_n)_{n\in\N}$ in $K_2(G)\cap P(G)$ which is an approximate identity for the convolution in $(\Cu(G), *)$ such that $\operatorname{supp}(K_n) \subseteq U$ for all $n\in \N$.

Moreover, for any such sequence $(K_n)_{n\in\N}$ and all $\mu \in \cM(G)$, the sequence $\big((\mu*K_n)\,\theta_G\big)_{n\in\N}$ converges vaguely to $\mu$.
\end{lemma}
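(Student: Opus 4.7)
My plan is to build the approximate identity via the classical trick $K_n := f_n * \widetilde{f_n}$, which automatically yields positive-definiteness. First, using continuity of subtraction in the LCAG $G$, I would pick a symmetric open pre-compact neighbourhood $V$ of $0$ with $V - V \subseteq U$; then, exploiting second countability of $G$, extract a decreasing sequence $(V_n)_{n \in \N}$ of symmetric open neighbourhoods of $0$ with $V_n \subseteq V$ and $\bigcap_n V_n = \{0\}$. For each $n$, Urysohn's lemma lets me choose $f_n \in \Cc(G)$ with $f_n \geqslant 0$, $\operatorname{supp}(f_n) \subseteq V_n$, and $\int_G f_n \, dx = 1$. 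Setting $K_n := f_n * \widetilde{f_n}$ gives $K_n \in K_2(G)$ by definition, $K_n \in P(G)$ via the standard identity
\begin{equation*}
\sum_{j,k} c_j \overline{c_k}\, K_n(x_j - x_k) \;=\; \int_G \Big| \sum_j c_j f_n(u + x_j) \Big|^2 du \;\geqslant\; 0,
\end{equation*}
$\operatorname{supp}(K_n) \subseteq V_n - V_n \subseteq U$, and the bonus facts $K_n \geqslant 0$ and $\int K_n\, dx = 1$.

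To verify the approximate identity property, I would run the textbook pointwise estimate for $g \in \Cu(G)$:
\begin{equation*}
|(K_n * g)(x) - g(x)| \;\leqslant\; \int_G |g(x-y) - g(x)|\, K_n(y)\, dy \;\leqslant\; \sup_{y \in V_n - V_n} |g(x - y) - g(x)|,
\end{equation*}
which tends to zero uniformly in $x$ by uniform continuity of $g$ together with $\bigcap_n (V_n - V_n) = \{0\}$.

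For the second, \emph{any such} part, take an arbitrary sequence $(K_n) \subseteq K_2(G) \cap P(G)$ that is an approximate identity in $(\Cu(G), *)$ with $\operatorname{supp}(K_n) \subseteq U$, and fix $\mu \in \cM(G)$ and $\varphi \in \Cc(G)$. Using $(\mu * K_n)(x) = \int K_n(x - y)\, d\mu(y)$ and Fubini (justified because $\operatorname{supp}(\varphi)$ and $\overline{U}$ are compact, forcing the $(x,y)$-integrand to sit in a fixed $L^1(\theta_G \otimes |\mu|)$ set), I obtain
\begin{equation*}
((\mu * K_n)\theta_G)(\varphi) \;=\; \int_G h_n(y)\, d\mu(y), \qquad h_n(y) \;:=\; \int_G K_n(x - y)\, \varphi(x)\, dx.
\end{equation*}
A change of variable identifies $h_n = K_n^{\dagger} * \varphi$, and since $K_n \in P(G)$ gives $K_n^{\dagger} = \overline{K_n}$, complex conjugation turns the approximate-identity property for $(K_n)$ into one for $(K_n^{\dagger})$. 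Hence $h_n \to \varphi$ uniformly, while $\operatorname{supp}(h_n) \subseteq \operatorname{supp}(\varphi) - \overline{U}$ sits in a single compact set $K'$ for all $n$. The defining continuity of the Radon measure $\mu$ on $\Cc(G; K')$ then gives $\mu(h_n) \to \mu(\varphi)$, which is the claimed vague convergence.

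The only subtle spot is the ``$K_n$ versus $K_n^{\dagger}$'' swap hidden in the Fubini computation, and this is exactly where positive-definiteness of the $K_n$ pays off: it provides the identity $K_n^{\dagger} = \overline{K_n}$, which allows transferring the approximate-identity property to $(K_n^\dagger)$. Everything else reduces to standard topological-group bookkeeping (continuity of subtraction, Urysohn, second countability) and the definition of a Radon measure.
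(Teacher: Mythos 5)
Your proposal is correct and follows essentially the same route as the paper: construct $K_n = f_n * \widetilde{f_n}$ from a positive approximate identity supported in a small symmetric neighbourhood, then prove the vague convergence by Fubini, reducing to uniform convergence of $K_n^\dagger * \varphi \to \varphi$ on a fixed compact set together with the defining bound of a Radon measure. The only cosmetic difference is that you resolve the $K_n$ versus $K_n^\dagger$ issue via $K_n^\dagger = \overline{K_n}$ from positive definiteness, while the paper uses the identity $\| K_n^\dagger * f - f\|_\infty = \| K_n * f^\dagger - f^\dagger\|_\infty$; both are fine.
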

\begin{proof}
Let $V$ be an open neighborhood of $0$ such that $V-V \subseteq U$. Pick an approximate identity $(g_n)_{n\in\N}$ with $\supp(g_n) \subseteq V$. Then, it is easy to verify that
\[
K_n= g_n * \widetilde{g_n}
\]
satisfies the given conditions.

Next, let $\mu \in \cM(G)$ and let $f \in \Cc(G)$. Let $K:= \supp(f)$. Then, $\supp(K_n*f) \subseteq K+ \overline{U}$.
Note that, since $(K_n)_{n\in\N}$ is an approximate identity, we have
\[
\| K_n^\dagger *f -f \|_\infty = \| K_n *f^\dagger -f^\dagger \|_\infty \to 0
\]
Therefore, since $\mu$ is a measure, we get
\begin{align*}
\left| \big((\mu*K_n)\theta_G\big)(f)- \mu(f) \right|
    &=\left| \int_G \int_G  f(t)\, K_n(t-s)\, \dd \mu(s)\, \dd t
       - \int_G f(s)\, \dd \mu(s) \right|  \\
    &=\left| \int_G (  (f*K_n^\dagger)(s) -   f(s))\ \dd \mu(s) \right| \\
    &\leqslant C_{K+\overline{U}}\, \| f*K_n^\dagger - f \|_\infty
\end{align*}
with the last inequality following from the definition of Radon measures.
\end{proof}

\textbf{For the remaining part, let us fix some sequence $(K_n)_{n\in\N}$ as in Lemma~\ref{L1}.}

\medskip

We can now make use of the sequence $(K_n)_{n\in\N}$ to characterise positive measures that are Fourier transforms of other measures.

\begin{theorem}\label{T1}
Let $\nu$ be a positive measure on $\widehat{G}$. Then, there is a measure $\mu$ on $G$ with $\widehat{\mu}=\nu$ if and only if
\begin{enumerate}
\item[(1)] $\nu$ is weakly admissible,
\item[(2)] the set $\{(\widehat{K_n}\,\nu)^{\reallywidecheck{\phantom{..}}}\, : \, n\in\N\}$ is vaguely bounded.
\end{enumerate}
\end{theorem}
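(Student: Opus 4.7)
The plan is to exploit Proposition~\ref{prop:bijection_ftsm_sam}, which associates to the weakly admissible measure $\nu$ a unique Fourier transformable semi-measure $\vartheta_\nu$ with $\widehat{\vartheta_\nu} = \nu$. By Proposition~\ref{prop:E} a measure $\mu$ with $\widehat{\mu} = \nu$ exists if and only if $\vartheta_\nu$ extends to a measure, and by Lemma~\ref{semi is measure} this in turn reduces to a uniform local bound $|\vartheta_\nu(f)| \leqslant C_K \|f\|_\infty$ for all $f \in K_2(G)$ supported in a given compact $K$. The bridge between these semi-measure considerations and the hypothesis (2) is Lemma~\ref{lem:properties_sm_1}(b), which says
\[
(\widehat{K_n}\nu)^{\reallywidecheck{\phantom{..}}}(t) = (\vartheta_\nu * K_n)(t) \, .
\]

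For the forward direction, suppose $\mu \in \cM(G)$ has $\widehat{\mu} = \nu$. Lemma~\ref{lem:wa_sa_prop}(b) gives (1). By Lemma~\ref{lem FT measure and semi measure}, $\mu$ is Fourier transformable as a semi-measure and restricts to $\vartheta_\nu$ on $K_2(G)$, so the displayed identity becomes $(\widehat{K_n}\nu)^{\reallywidecheck{\phantom{..}}} = \mu * K_n$. By Lemma~\ref{L1} the sequence $\bigl((\mu * K_n)\theta_G\bigr)_n$ converges vaguely to $\mu$, hence is vaguely bounded, which is (2).

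For the converse, assume (1) and (2) and set $h_n := (\widehat{K_n}\nu)^{\reallywidecheck{\phantom{..}}}$. Since $K_n = g_n * \widetilde{g_n}$ is symmetric and positive definite, $\widehat{K_n} = |\widehat{g_n}|^2$ is real, non-negative, bounded by $1$, and converges pointwise to $1$ on $\widehat{G}$ (for each fixed $\chi$, continuity of $\chi$ at $0$ together with $\|g_n\|_1 = 1$ and shrinking supports gives $\widehat{g_n}(\chi) \to 1$). Weak admissibility of $\nu$ gives $\widehat{K_n} \in L^1(\nu)$, so $\widehat{K_n}\nu$ is a finite measure and $h_n \in \Cu(G)$. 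For $f \in K_2(G)$ with $\supp(f) \subseteq K$, Fubini (justified by finiteness of $\widehat{K_n}\nu$ and compact support of $f$) yields
\[
\int_G f(t)\, h_n(t)\, \dd t = \int_{\widehat{G}} \widehat{K_n}(\chi)\, \reallywidecheck{f}(\chi)\, \dd\nu(\chi) \, .
\]
Since $\reallywidecheck{f} \in L^1(\nu)$ by weak admissibility and $|\widehat{K_n}\, \reallywidecheck{f}| \leqslant |\reallywidecheck{f}|$, dominated convergence together with Lemma~\ref{lem:properties_sm_1}(a) gives
\[
\lim_{n\to\infty} \int_G f(t)\, h_n(t)\, \dd t = \int_{\widehat{G}} \reallywidecheck{f}(\chi)\, \dd\nu(\chi) = \vartheta_\nu(f) \, .
\]

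To conclude, hypothesis (2) together with the Banach--Steinhaus theorem applied to the linear functionals $\phi \mapsto \int_G \phi(t)\, h_n(t)\, \dd t$ on the Banach space $\{\phi \in \Cc(G) : \supp(\phi) \subseteq K\}$ with the sup norm produces some $C_K > 0$ such that $\bigl|\int_G \phi\, h_n\, \dd t\bigr| \leqslant C_K \|\phi\|_\infty$ for all such $\phi$ and all $n$. Passing to the limit on $f \in K_2(G)$ with $\supp(f) \subseteq K$ yields $|\vartheta_\nu(f)| \leqslant C_K \|f\|_\infty$, so Lemma~\ref{semi is measure} lets us extend $\vartheta_\nu$ to a measure $\mu$. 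Since the extension still agrees with $\vartheta_\nu$ on $K_2(G)$, Lemma~\ref{lem FT measure and semi measure} concludes that $\mu$ is Fourier transformable as a measure with $\widehat{\mu} = \nu$. The main technical point is the passage from vague boundedness of the measures $h_n \theta_G$ to a locally uniform operator bound on the level of $K_2(G)$-functionals; this is precisely what the uniform boundedness principle supplies, and without it the Fubini/dominated convergence computation would only yield pointwise convergence of the functionals rather than the required equicontinuity.
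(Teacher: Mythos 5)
Your proof is correct, and while the necessity direction matches the paper's, your sufficiency argument takes a genuinely different route. The paper exploits positivity: since $\nu\geqslant 0$, each $\mu_n=(\widehat{K_n}\,\nu)^{\reallywidecheck{\phantom{..}}}$ is a positive definite measure, so one can extract a vague cluster point $\mu$ from the vaguely bounded (hence vaguely precompact) family and invoke the vague continuity of the Fourier transform on positive definite measures to conclude $\widehat{\mu}=\lim\widehat{\mu_{n_m}}=\lim\widehat{K_{n_m}}\,\nu=\nu$. You instead stay entirely inside the semi-measure framework: dominated convergence gives $\int_G f\,h_n\,\dd\theta_G\to\vartheta_\nu(f)$ for $f\in K_2(G)$, and Banach--Steinhaus on the Banach space $\{\phi\in\Cc(G):\supp(\phi)\subseteq K\}$ upgrades the pointwise (vague) bound of hypothesis (2) to a locally uniform bound, after which Lemma~\ref{semi is measure} and Proposition~\ref{prop:E} finish the job. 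What your approach buys is generality: nowhere do you use $\nu\geqslant 0$, so your argument proves the statement for an arbitrary weakly admissible $\nu$, whereas the paper's compactness argument genuinely needs positive definiteness of the $\mu_n$ to pass the Fourier transform through the vague limit. Two small cosmetic points: the convergence $\widehat{K_n}(\chi)\to 1$ follows directly from the approximate identity property applied to the character $\chi\in\Cu(G)$ (namely $(K_n*\chi)(t)=\widehat{K_n}(\chi)\,\chi(t)$), so you need not appeal to shrinking supports, which Lemma~\ref{L1} does not literally guarantee; and the domination $|\widehat{K_n}|\leqslant 1$ should be replaced by $|\widehat{K_n}|\leqslant\|K_n\|_{L^1}$, which is bounded uniformly in $n$ since $\|K_n\|_{L^1}\to 1$ --- neither affects the argument.
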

\begin{proof}
First, assume that such a measure $\mu$ exists. By definition of the Fourier transform of a measure, $\nu$ is weakly admissible. Moreover, by  \cite[Lem. 4.9.24]{MoSt}, we have $(\mu*K_n)(t)= (\widehat{K_n}\,\nu)^{\reallywidecheck{\phantom{..}}}(t)$, and hence
\[
\mu_n:= (\widehat{K_n}\,\nu)^{\reallywidecheck{\phantom{..}}} = (\mu*K_n)\,\theta_G \xrightarrow{n\to\infty} \mu \,,
\]
by Lemma~\ref{L1}. Therefore, the set $\{\mu_n\, : \, n\in\N\}$ is vaguely bounded \cite[Prop. A.4]{SS}.

\medskip

\noindent On the other hand, assume that properties (1) and (2) hold. Then, $\{\mu_n\, : \, n\in\N\}$ is vaguely compact \cite[Prop.~A.4]{SS}, and $(\mu_n)_{n\in\N}$ has a vague cluster point, say $\mu$. Since $\widehat{K_n}\, \nu$ is a finite positive measure, $\mu_n$ is a positive definite measure. Hence, there is a subsequence $(n_m)_{m\in\N}$ such that $(\mu_{n_m})_{m\in\N}$ converges vaguely to some measure $\mu$. Since $\mu_n$ is positive definite, so is $\mu$. Thus, we obtain
\[
 \lim_{m\to\infty} \widehat{\mu_{n_m}} = \widehat{\mu} \,,
\]
see \cite[Lem.~4.11.10]{MoSt}. Also, we have
\[
\lim_{n\to\infty} \widehat{\mu_{n}} = \lim_{n\to\infty} \widehat{K_n}\,\nu = \nu \,.
\]
Consequently, $\widehat{\mu}$ and $\nu$ coincide.
\end{proof}

\begin{corollary}  \label{cor:FTofmeasure}
Let $\nu$ be a positive measure on $\widehat{G}$. Then, there is a measure $\mu$ on $G$ with $\widehat{\mu}=\nu$ if and only if
\begin{enumerate}
\item[(1)] $\nu$ is weakly admissible,
\item[(2)] for every $g\in\Cc(G)$, there is a constant $c>0$ such that
\[
\big|\nu(\reallywidecheck{g}\, \widehat{K_n})\big| \leqslant c \qquad \text{ for all } n\in\N \,.
\]
\end{enumerate}
\end{corollary}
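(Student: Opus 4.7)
The plan is to deduce this immediately from Theorem~\ref{T1}: condition (1) is common to both characterisations, so the task reduces to showing that the vague boundedness of the family $\{\mu_n\}_{n\in\N}$, where $\mu_n := (\widehat{K_n}\,\nu)^{\reallywidecheck{\phantom{..}}}$, is equivalent to the pointwise bound stated in condition (2) of the corollary.

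First I would unfold $\mu_n(g)$ for $g\in\Cc(G)$. Since $\nu$ is weakly admissible and $K_n\in K_2(G)$, the measure $\widehat{K_n}\,\nu$ is finite (this was already used in the proof of Theorem~\ref{T1}); hence $\mu_n$ is the absolutely continuous measure with continuous bounded density
\[
(\widehat{K_n}\,\nu)^{\reallywidecheck{\phantom{..}}}(t) \;=\; \int_{\widehat{G}} \chi(t)\,\widehat{K_n}(\chi)\,\dd\nu(\chi),
\]
so that
\[
\mu_n(g) \;=\; \int_{G} g(t) \int_{\widehat{G}} \chi(t)\,\widehat{K_n}(\chi)\,\dd\nu(\chi)\,\dd t.
\]

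Next I would apply Fubini's theorem to swap the order of integration. The swap is legitimate because $g\in\Cc(G)\subseteq L^1(G)$, which makes $\reallywidecheck{g}$ bounded (by $\|g\|_1$), while $\widehat{K_n}\in L^1(|\nu|)$ by weak admissibility of $\nu$; consequently $|g(t)\chi(t)\widehat{K_n}(\chi)| = |g(t)|\,|\widehat{K_n}(\chi)|$ is integrable with respect to the product measure $\dd t\otimes \dd|\nu|$ on $\supp(g)\times \widehat{G}$. Performing the swap yields
\[
\mu_n(g) \;=\; \int_{\widehat{G}} \widehat{K_n}(\chi)\,\reallywidecheck{g}(\chi)\,\dd\nu(\chi) \;=\; \nu\bigl(\reallywidecheck{g}\,\widehat{K_n}\bigr).
\]

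Finally, vague boundedness of $\{\mu_n\}_{n\in\N}$ is, by definition, the statement that for every $g\in\Cc(G)$ there exists a constant $c=c(g)>0$ with $|\mu_n(g)|\leqslant c$ for all $n\in\N$; the identity just derived shows that this coincides literally with condition (2) of the corollary. The corollary then follows at once from Theorem~\ref{T1}. The only step requiring care is the verification that Fubini applies, which is settled by the integrability observations above; no substantial new obstacle arises beyond what was handled in Theorem~\ref{T1}.
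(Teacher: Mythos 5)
Your proposal is correct and follows the same overall reduction as the paper: both proofs hinge on establishing the identity $\nu(\reallywidecheck{g}\,\widehat{K_n}) = (\widehat{K_n}\,\nu)^{\reallywidecheck{\phantom{..}}}(g)$ for \emph{all} $g\in\Cc(G)$ (not merely $g\in K_2(G)$, where it is automatic from the definition of the Fourier transform) and then invoking Theorem~\ref{T1}. Where you differ is in how that identity is proved. The paper takes a structural route: it observes that $\widehat{K_n}\,\nu$ is a finite measure, hence twice Fourier transformable, and then applies \cite[Lem.~4.9.26]{MoSt} to the convolution $\reallywidecheck{\widehat{K_n}\,\nu}*(g^\dagger\theta_G)$, identifying two continuous densities that agree as measures. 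You instead compute directly with Fubini, using that $\widehat{K_n}\in L^1(\nu)$ (by weak admissibility, since $K_n^\dagger\in K_2(G)$ and $\widehat{K_n}=\reallywidecheck{K_n^\dagger}$) and $g\in L^1(G)$, so that the integrand is dominated by $|g(t)|\,\widehat{K_n}(\chi)$ on $\supp(g)\times\widehat{G}$. Your computation is more elementary and self-contained; the paper's version avoids measure-theoretic bookkeeping by outsourcing it to the convolution theorem for Fourier transformable measures, which is the style used throughout the rest of that section. One small caveat: you assert that vague boundedness of $\{\mu_n\}$ is ``by definition'' the pointwise bound of condition (2); the paper is careful to cite \cite[Prop.~A.4]{SS} here, since depending on the convention ``vaguely bounded'' may be defined via uniform bounds on compacta rather than pointwise bounds on test functions, and the equivalence of the two is a (standard but nontrivial) uniform boundedness statement. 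You should either adopt the pointwise definition explicitly or retain that citation.
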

\begin{proof}
We first show that, for all $g \in \Cc(G)$ and all $n\in\N$, we have
\begin{equation}\label{eq1}
\nu(\reallywidecheck{g}\, \widehat{K_n}) = (\widehat{K_n}\,\nu)^{\reallywidecheck{\phantom{..}}}(g)\,.
\end{equation}
Note that while this looks like the definition of the Fourier transform, but the definition of Fourier transforms only guarantees that \eqref{eq1} holds for all $g \in K_2(G)$, and we need to establish this relation for all $g \in \Cc(G)$.

Note first that $\widehat{K_n}\,\nu$ is a finite measure by weak admissibility, and hence it is twice Fourier transformable \cite[Lem.~4.9.14 and Lem.~4.9.15]{MoSt}.
Then, by \cite[Lem.~4.9.26]{MoSt}, $\reallywidecheck{\widehat{K_n}\,\nu} *(g^{\dagger}\theta_G)$ is a Fourier transformable measure and
\[
\widehat{\reallywidecheck{\widehat{K_n}\,\nu} *(g^{\dagger}\theta_G})= \reallywidecheck{g}\widehat{K_n}\,\nu \,.
\]
Since the right hand side is a finite measure, it is Fourier transformable and
\[
\reallywidecheck{\widehat{K_n}\,\nu} *(g^{\dagger}\theta_G) = (\reallywidecheck{\widehat{K_n}\,\nu} *g^\dagger)\theta_G =I \theta_G \,,
\]
where $I(x)= \reallywidecheck{\reallywidecheck{g}\widehat{K_n}\,\nu}(x)$. Since $I(x)$ and $(\reallywidecheck{\widehat{K_n}\,\nu} *g^{\dagger})(x)$ are continuous functions which are equal as measures, they are equal everywhere. Hence
\[
\nu(\reallywidecheck{g}\, \widehat{K_n})= I(0) = (\reallywidecheck{\widehat{K_n}\,\nu} *g^{\dagger}) (0) = (\widehat{K_n}\,\nu)^{\reallywidecheck{\phantom{..}}}(g) \,.
\]
This proves \eqref{eq1}.

The claim follows now from Theorem~\ref{T1} and \cite[Prop. A.4]{SS}.
\end{proof}

The previous corollary enables us to characterise the Fourier transformable semi-measures that are measures.

\begin{corollary}
A positive definite Fourier transformable semi-measure $\vartheta$ is a measure if and only if, for all $g\in \Cc(G)$, there is a constant $c>0$ such that
\[
|\vartheta(K_n*g)| \leqslant c \,.
\]
\end{corollary}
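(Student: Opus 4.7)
The plan is to reduce this corollary directly to Corollary~\ref{cor:FTofmeasure} via the bijection of Proposition~\ref{prop:bijection_ftsm_sam}. Since $\vartheta$ is positive definite and Fourier transformable, Corollary~\ref{cor FT implies tb and inter} guarantees $\vartheta$ is semi-translation bounded and intertwining, so by Bochner's theorem for semi-measures (Theorem~\ref{bochner thm}, (iv)$\Rightarrow$(iii)) the Fourier transform $\nu := \widehat{\vartheta}$ is a positive, weakly admissible measure on $\widehat{G}$. This puts us precisely in the setting of Corollary~\ref{cor:FTofmeasure}.

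The heart of the argument is the identity
\[
\vartheta(K_n*g) \;=\; \nu\bigl(\reallywidecheck{g}\,\widehat{K_n}\bigr)
\]
for every $g \in \Cc(G)$ and every $n \in \N$. Note first that $K_n*g \in K_2(G)$, since $K_n \in K_2(G) \subseteq \Cc(G)$ and $g \in \Cc(G)$. Then Lemma~\ref{lem:properties_sm_1}(a) gives
\[
\vartheta(K_n*g) \;=\; \widehat{\vartheta}\bigl(\reallywidecheck{K_n*g}\bigr) \;=\; \nu\bigl(\reallywidecheck{K_n}\,\reallywidecheck{g}\bigr).
\]
The sequence $(K_n)_{n\in\N}$ produced by Lemma~\ref{L1} may be chosen with $g_n$ real, non-negative and even, in which case $K_n = g_n * \widetilde{g_n} = g_n * g_n$ is real and even, so $K_n^\dagger = K_n$ and consequently $\reallywidecheck{K_n} = \widehat{K_n^\dagger} = \widehat{K_n}$. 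This yields the claimed identity.

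With the identity in hand, both directions become immediate. If $\vartheta$ extends to a measure $\mu$, then by Lemma~\ref{lem FT measure and semi measure} $\mu$ is Fourier transformable as a measure with $\widehat{\mu} = \nu$, and Corollary~\ref{cor:FTofmeasure} furnishes the bound $|\nu(\reallywidecheck{g}\,\widehat{K_n})| \leqslant c$, which via the identity is exactly $|\vartheta(K_n*g)| \leqslant c$. Conversely, if this bound holds for every $g \in \Cc(G)$, the identity translates it into hypothesis (2) of Corollary~\ref{cor:FTofmeasure}, producing a measure $\mu \in \cM(G)$ with $\widehat{\mu} = \nu = \widehat{\vartheta}$. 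By Lemma~\ref{lem FT measure and semi measure} and the bijection of Proposition~\ref{prop:bijection_ftsm_sam}, the Fourier transformable semi-measures $\mu$ and $\vartheta$ coincide, so $\vartheta$ extends to the measure $\mu$. There is no serious obstacle in the argument; the only delicate point is the evenness of $K_n$, which is handled by the freedom in choosing the approximate identity $(g_n)$ in Lemma~\ref{L1}.
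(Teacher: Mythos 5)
Your proof is correct and follows essentially the same route as the paper: both rest on the identity $\vartheta(K_n*g)=\nu\bigl(\reallywidecheck{K_n}\,\reallywidecheck{g}\bigr)$ obtained from Lemma~\ref{lem:properties_sm_1}(a), the positivity of $\nu=\widehat{\vartheta}$ via Bochner's theorem, and a reduction to Corollary~\ref{cor:FTofmeasure}. The only difference is cosmetic: where you re-choose the approximate identity to be even so that $\reallywidecheck{K_n}=\widehat{K_n}$, the paper keeps the arbitrary fixed sequence of Lemma~\ref{L1} and instead invokes Corollary~\ref{cor:FTofmeasure} with $K_n$ replaced by $K_n^\dagger$, which covers the fixed sequence as stated.
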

\begin{proof}
Let $\nu$ be the positive Fourier transform of $\vartheta$. Then, for all $g \in \Cc(G)$ we have
\[
\vartheta(K_n*g)=\nu(\reallywidecheck{g}\, \reallywidecheck{K_n})
\]
The claim follows now from Corollary~\ref{cor:FTofmeasure} with $K_n$ replaced by $K_n^\dagger$.
\end{proof}

\section{The generalized Eberlein decomposition for Fourier transformable measures}

We know that $\gamma_{\text{s}}$ and $\gamma_0$ are not only semi-measures but measures \cite[Thm.~4.10.12]{MoSt}. Therefore, it suffices to show that $\gamma_{\text{0a}}$ can chosen to be a measure because then
\[
\gamma_{\text{0s}} = \gamma - \gamma_{\text{s}} - \gamma_{\text{0a}}
\]
implies that $\gamma_{\text{0s}}$ is a measure as well. Let $(\widehat{\gamma})_{\text{ac}}= h\theta_{\widehat{G}}$ for some $h\in L_{\text{loc}}^1(\widehat{G})$. Now, we can ask the following question: which properties must $h$ satisfy so that $\gamma_{0\text{a}}$ is a measure?

\begin{proposition}
If $h$ satisfies one of the following statements, then $\gamma_{0\text{a}}$ is a measure:
\begin{enumerate}
\item[(a)] $h\in B(\widehat{G})$.
\item[(b)] $h\in L^p(\widehat{G})$ for $1\leqslant p \leqslant 2$.
\item[(c)] $h\in SAP(\widehat{G})$ such that for all compact sets $K \subseteq G$ we have
\[
\sum_{x \in K} \left| a_x(h) \right| < \infty  \,,
\]
where  $a_x(h)= M(x(\chi) h(\chi))$ is the Fourier--Bohr coefficient of $h$.
\item[(d)] $h\in\widehat{L^p(G)}$ for $1\leqslant p \leqslant 2$.
\end{enumerate}

Moreover, in (a),(b), the measure $\gamma_{0a}$ is translation bounded.
\end{proposition}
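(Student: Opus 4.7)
The strategy is to exhibit, for each of the four conditions, an explicit Fourier transformable measure $\mu$ on $G$ whose Fourier transform equals $(\widehat{\gamma})_{\text{ac}} = h\,\theta_{\widehat{G}}$. Indeed, by Proposition~\ref{prop:bijection_ftsm_sam} combined with Lemma~\ref{lem FT measure and semi measure}, the Fourier transformable semi-measure $\gamma_{0\text{a}}$ is uniquely characterised by $\widehat{\gamma_{0\text{a}}} = (\widehat{\gamma})_{\text{ac}}$, so producing such a $\mu$ automatically identifies $\gamma_{0\text{a}} = \mu$ as semi-measures and $\gamma_{0\text{a}}$ is a measure. The ``moreover'' clause then reduces to verifying translation boundedness in (a) and (b).

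For (a), write $h = \widehat{\sigma}$ with $\sigma$ a finite measure on $G$ and take $\mu = \sigma$; the identity $\int_G (f*\widetilde f)\,\dd\sigma = \int_{\widehat{G}} |\reallywidecheck f|^2\, h\,\dd\chi$ for $f\in\Cc(G)$ follows by Fubini (using $\reallywidecheck{f*\widetilde f} = |\reallywidecheck f|^2 \in L^1(\widehat{G})$ for $f \in K_2(G)$), giving $\widehat\mu = h\,\theta_{\widehat G}$, and $\sigma$ is translation bounded because it is finite. For (b) split $h = h\cdot 1_{\{|h|>1\}} + h\cdot 1_{\{|h|\leq 1\}}$, with the first piece in $L^1(\widehat{G})$ (since $|h|\leq|h|^p$ there) and the second in $L^2(\widehat{G})$ (since $|h|^2\leq|h|^p$ there, using $p\leq 2$): for $h \in L^1(\widehat{G})$ take $\mu = \reallywidecheck h\,\theta_G$, whose density is bounded by $\|h\|_1$ (hence $\mu$ is translation bounded), with $\widehat\mu = h\,\theta_{\widehat G}$ by Parseval; for $h\in L^2(\widehat{G})$ take $\mu = H\theta_G$ where $H\in L^2(G)$ is the Plancherel inverse transform of $h$, translation bounded via Cauchy--Schwarz ($\sup_t\int_{t+K}|H|\,\dd t \leq |K|^{1/2}\|H\|_2$), with $\widehat\mu = h\,\theta_{\widehat G}$ by Plancherel applied to $f*\widetilde f\in L^1 \cap L^2$. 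Case (d) is analogous: write $h = \widehat H$ with $H\in L^p(G)$, note $H\in L^1_{\text{loc}}(G)$ by H\"older, take $\mu = H\theta_G$, and verify $\widehat\mu = h\,\theta_{\widehat G}$ via Hausdorff--Young/Plancherel.

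Case (c) is the most delicate. Since $h\in SAP(\widehat{G})$ its Fourier--Bohr spectrum $\Lambda = \{x \in G : a_x(h) \neq 0\}$ is countable, and the local summability hypothesis ensures that
\[
\mu := \sum_{x\in\Lambda} a_x(h)\,\delta_{-x}
\]
is a well-defined locally finite complex Radon measure on $G$. A direct computation from the definitions gives $\widehat{\delta_{-x}} = \chi(x)\,\theta_{\widehat G}$. For $f\in\Cc(G)$ only those $x$ with $-x\in\supp(f)-\supp(f)$ contribute to $\mu(f*\widetilde f)$, so local summability makes the resulting sum absolutely convergent, and writing $(f*\widetilde f)(-x) = \int|\reallywidecheck f|^2\chi(x)\,\dd\chi$ and interchanging via Fubini yields
\[
\mu(f*\widetilde f) = \int_{\widehat G} |\reallywidecheck f|^2\,\Bigl(\sum_{x\in(\supp(f)-\supp(f))\cap\Lambda} a_x(h)\,\chi(x)\Bigr)\,\dd\chi.
\]
The principal obstacle is then identifying this restricted trigonometric sum with $h$ inside the $L^1$-pairing against $|\reallywidecheck f|^2$: the Fourier--Bohr series of $h$ need not converge pointwise under mere local summability, so one must invoke Fej\'er-type uniform approximation of $h$ by trigonometric polynomials on the Bohr compactification of $\widehat G$ and pass to the limit by dominated convergence, using local summability and the boundedness of $\|f*\widetilde f\|_\infty$ to control the Fej\'er partial sums.
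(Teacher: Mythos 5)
Your overall strategy --- exhibit an explicit Fourier transformable measure $\mu$ with $\widehat{\mu}=h\,\theta_{\widehat{G}}$ and use the bijectivity of the Fourier transform on semi-measures (Proposition~\ref{prop:bijection_ftsm_sam} plus Lemma~\ref{lem FT measure and semi measure}) to conclude $\gamma_{0\text{a}}=\mu$ --- is exactly the paper's, and your treatments of (a) and (d) coincide with the paper's, which quotes \cite[Lem.~4.9.14]{MoSt} and \cite[Thm.~2.2]{ARMA1} for the respective verifications. In (b) you take a genuinely different route: the paper checks the uniform bound $|(\widehat{\gamma})_{\text{ac}}(\reallywidecheck{f}\,\reallywidecheck{K_n})|\leqslant c$ of Corollary~\ref{cor:FTofmeasure} via H\"older, Hausdorff--Young and Young, which shows $\gamma_{0\text{a}}$ is a measure but leaves the claimed translation boundedness implicit, whereas your splitting $h=h1_{\{|h|>1\}}+h1_{\{|h|\leqslant1\}}\in L^1(\widehat{G})+L^2(\widehat{G})$ realises $\gamma_{0\text{a}}$ explicitly as the sum of a measure with bounded continuous density and one with $L^2$ density, so translation boundedness comes for free. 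That part of your argument is correct and arguably more informative than the paper's.

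The genuine gap is in (c). You correctly identify the candidate measure $\mu=\sum_{x}a_x(h)\,\delta_{-x}$ and correctly observe that local summability of the coefficients makes it a Radon measure, but the step you yourself call ``the principal obstacle'' --- proving $\mu(f*\widetilde f)=\int_{\widehat{G}}|\reallywidecheck{f}|^2\,h\ \dd\theta_{\widehat{G}}$, i.e.\ resumming the Fourier--Bohr series of $h$ against $|\reallywidecheck{f}|^2$ --- is the entire analytic content of this case, and you only gesture at it. Closing it requires Bochner--Fej\'er-type kernels on a general LCAG adapted to the (countable) Fourier--Bohr spectrum of $h$, with weights $w_N(x)\in[0,1]$ tending to $1$, and then two separate limit interchanges: uniform convergence $\sigma_N(h)\to h$ paired against the $L^1$ density $|\reallywidecheck{f}|^2$ on one side, and dominated convergence of $\sum_x w_N(x)\,a_x(h)\,(f*\widetilde f)(-x)$ via local summability on the other. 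This is a substantial argument in its own right; the paper does not reprove it but cites \cite[Thm.~6.5 and Thm.~8.1]{NS20b}, which establish precisely that a strongly almost periodic density with locally summable Fourier--Bohr coefficients is the Fourier transform of the pure point measure built from those coefficients. As written, your part (c) is an outline of that theorem rather than a proof of it.
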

\begin{proof}
(a) Since $h \in B(\widehat{G})$, there exists a finite measure $\mu$ such that $h=\widehat{\mu}$. Then, by \cite[Lem. 4.9.14]{MoSt}, as measures we have $\widehat{\mu}=h \theta_{\widehat{G}} =(\widehat{\gamma})_{\text{ac}}$.
Then, Lemma~\ref{lem FT measure and semi measure} gives $\gamma_{0a}=\mu$.

\medskip

\noindent (b) Since $(\widehat{\gamma})_{\text{ac}}$ is weakly admissible by Lemma~\ref{lem:stronglyadmissible_prop}(b), it suffices to show that, for every $f\in \Cc(G)$, there is a constant $c>0$ such that
\[
|(\widehat{\gamma})_{\text{ac}}(\reallywidecheck{f}\reallywidecheck{K_n})|\leqslant c \qquad \text{ for all } n\in\N.
\]
This follows from H\"older's inequality, Hausdorff--Young's inequality and Young's inequality for convolutions because
\begin{align*}
|(\widehat{\gamma})_{\text{ac}}(\reallywidecheck{f}\reallywidecheck{K_n})|
    &= \left| \int_{\widehat{G}} \reallywidecheck{f*K_n}(\chi)\, h(\chi)\,
       \dd \theta_{\widehat{G}}(\chi) \right|
       \leqslant \|\reallywidecheck{f*K_n}\|_{L^{p'}}\, \|h\|_{L^p} \\
    &\leqslant \|f*K_n\|_{L^p}\, \|h\|_{L^p}
      \leqslant \|f\|_{L^p}\, \underbrace{\|K_n\|_{L^1}}_{=1}\, \|h\|_{L^p}
     =  \|f\|_{L^p}\, \|h\|_{L^p} \,,
\end{align*}
where $p'$ is the H\"older conjugate of $p$, i.e. $\frac{1}{p}+\frac{1}{p'}=1$.

\medskip

\noindent (c)
By \cite[Thm.~6.5 and Thm.~8.1]{NS20b} there exists a Fourier transformable measure $\mu$ on $G$ such that $\widehat{\mu}= h \theta_{\widehat{G}}$.
Then, Lemma~\ref{lem FT measure and semi measure} gives $\gamma_{0a}=\mu$,
which completes the proof.

\medskip

\noindent (d) Let $f \in L^p(G)$ be such that $\widehat{f}=h$. Then, by \cite[Thm.~2.2]{ARMA1}, $\mu=f \theta_G$ is Fourier transformable as measure and
$\widehat{\mu}=h \theta_{\widehat{G}}$.
Lemma~\ref{lem FT measure and semi measure} again gives $\gamma_{0a}=\mu$,
which completes the proof.
\end{proof}

\section*{Acknowledgements}

This work was supported by the German Research Foundation (DFG), within the
CRC 1283 at Bielefeld University and via research grant 415818660
(TS), and by the Natural Sciences and Engineering Council of Canada
(NSERC), via grant 2020-00038 (NS).

\end{document}